\documentclass[11pt, leqno]{amsart}

\usepackage{color}
\usepackage{amsmath,amssymb,amsfonts,amsthm,bm}
\usepackage{graphicx}
\usepackage{float}
\usepackage{enumerate}
\usepackage{appendix}
  \usepackage[shortlabels]{enumitem}
\usepackage[numbers, sort&compress]{natbib}
 \usepackage[left=1.35in, right=1.35in]{geometry}
\headheight=7pt \headsep=18pt
\footskip=12pt
\textheight=50.5pc 
\topskip=10pt

\def\R{\mathbb R}
\def\Z{\mathbb Z}

\def\p{\mathbb P}
\def\q{\mathbb Q}

\def\N{\mathbb N}

\def\cC{\mathcal C}

\def\d{\partial}
\def\ep{\epsilon}
\def\t{\dot}

\def\B{\dot{B}}
\def\H{\dot{H}}
\def\W{\dot{W}}

\renewcommand\div{{\rm div}\,}
\renewcommand\lim{{\rm lim}\,}
\renewcommand\exp{{\rm exp}\,}
\renewcommand\sup{{\rm sup}\,}
\renewcommand\inf{{\rm inf}\,}
\renewcommand\log{{\rm log}\,}
\renewcommand\det{{\rm det}\,}

\newcommand{\with}{\quad\hbox{with}\quad}
\newcommand{\andf}{\quad\hbox{and}\quad}

\newcommand{\Int}{\displaystyle \int}

\newtheorem{theorem}{Theorem}[section]
 \newtheorem{corollary}[theorem]{Corollary}
 
 \newtheorem{proposition}[theorem]{Proposition}
 \theoremstyle{definition}
 \newtheorem{definition}[theorem]{Definition}
 \theoremstyle{remark}
 \newtheorem{remark}[theorem]{Remark}
 
 \numberwithin{equation}{section}
\newcommand{\bmu}{\bar{\mu}}
\newcommand{\bk}{\bar{k}}
\newcommand{\wh}{\widehat}
\newcommand{\wt}{\widetilde}
\newcommand{\eps}{\varepsilon}

\newcommand{\abs}[1]{\left\vert#1\right\vert}

\newcommand{\norm}[1]{\Vert#1\Vert}

\begin{document}
\title[The density-dependent incompressible Navier-Stokes-Korteweg system ]
{Global existence  and uniqueness of the density-dependent incompressible Navier-Stokes-Korteweg system with variable capillarity and viscosity coefficients}
\author{Shan Wang}
\begin{abstract}
We consider the global well-posedness of the inhomogeneous incompressible Navier-Stokes-Korteweg system with a general capillary term. Based on the maximal regularity property, we obtain the global existence and uniqueness of solutions to the incompressible Navier-Stokes-Korteweg system with variable viscosity and capillary terms. By assuming the initial density  $\rho_0$  is close to a positive constant, additionally,
the initial velocity $u_0$ and the initial density  $\nabla \rho_0$ are small 
in critical space  $\B^{-1+d/p}_{p,1}(\R^{d})$ $(1<p<d).$  This work relies on the maximal regularity property of the heat equation, of the Stokes equation, and of the  Lam\'e equation.

\end{abstract}
\date{}
\keywords{Incompressible Navier Stokes Korteweg equations, capillarity, maximal regularity, lam\'e system, global well-posedness}

\maketitle
\section*{Introduction}
In this paper, our aim is to extend to more elaborated models what is known for  the inhomogeneous incompressible Navier-Stokes system: \begin{equation*}
\left\{\begin{aligned}
&\d_t\rho+\div(\rho u)=0, \\
 & \rho \d_t u+\rho u\cdot \nabla u-\mu \Delta u+\nabla P=0, \\
&\div u=0,\\
&(\rho,u)|_{t=0}=(\rho_{0},u_{0}).
\end{aligned}\right.\eqno(INS)
\end{equation*}
One can consider for instance the case where the internal capillarity is taken into account.
This leads to the so-called  Navier-Stokes-Korteweg system that has been derived at the end of the  XIX-th century, 
and has known a renewed interest recently to model fluids with phase transition. 
For this system, the density is expected to be smoothed out, so that 
the interface between the phases of the fluid under consideration are diffuse, 
as opposed to sharp interfaces for the classical modelization based on $(INS).$
This presents a decisive advantage in the numerical study 
of fluids with phase transition (the numerical  aspect 
will not be considered in this thesis, though). 

\medbreak
A classical  model to study the dynamics of a fluid endowed with internal capillarity is the following general compressible Korteweg system:

 \begin{equation*}
\left\{\begin{aligned}
&\d_t\rho+\div(\rho u)=0, \\
 &\d_t(\rho u)+\div(\rho u\otimes u)-\mathcal{A} u+\nabla(P(\rho))=\div K,
\end{aligned}\right.
\end{equation*}
where $\rho(t,x)\in \R$ and $u(t,x)\in \R^d,$ respectively, denote the density and velocity of the fluid at position $(t,x)\in \R_+\times \R^d$ with $d\geq 2.$ The diffusion operator is ${\mathcal A}u=2\div(\mu(\rho)D(u))+\nabla(\lambda(\rho)\div u),$ and the differential operator $D$ is defined by $D(u)=\nabla u+Du$ with $(D(u))_{i,j}=\d_iu^j+\d_ju^i.$

The viscosity coefficients $\mu$ and $\lambda$ are  smooth functions 
of $\rho$ satisfying $\mu>0$ and $\lambda+2\mu>0,$
 the pressure $P$ a given function  which is usually chosen as the  Van der Waals pressure:
 $$P(\rho, T^*)=\frac{RT^*\rho}{b-\rho}-a\rho^2$$
 where $R$ is the specific gas constant, $a,$ $b$ are positive constants, 
and the general capillary tensor $K$ satisfies  
\begin{equation}\label{eq:divK}\div K=\nabla \Bigl(\rho k(\rho)\Delta \rho+\frac 12(k(\rho)+\rho k'(\rho))\abs{\nabla \rho}^2\Bigr)-\div (k(\rho)\nabla \rho \otimes \nabla \rho)\end{equation}
with $k$ a smooth function of $\rho$ (a common choice is $k(\rho)=\kappa/\rho$ with $\kappa>0$). 
 \medbreak
 In this paper, we are concerned with the following \emph{incompressible} Navier-Stokes-Korteweg system: 
 \begin{equation*}
\left\{\begin{aligned}
&\d_t\rho+\div(\rho u)=0, \\
 & \rho \d_t u+\rho u\cdot \nabla u- \div(\mu(\rho)D(u))+\nabla P=-\div(k(\rho)\nabla \rho\otimes \nabla \rho), \\
&\div u=0,\\
&(\rho,u)|_{t=0}=(\rho_{0},u_{0}).
\end{aligned}\right.\leqno(INSK)
\end{equation*}
where $\rho(t,x)\in \R$, $P(t,x)\in \R$, $u(t,x)\in \R^d$ respectively denote the unknown density, pressure, and velocity of the fluid at the position $(t,x)\in \R_+\times \R^d$ with $d=2,3$.  The scalar functions $\mu$ and $k$ are smooth function $\R \to \R.$ 

So far, very few mathematical works have been devoted to the study of this incompressible version of the Navier-Stokes-Korteweg system. Although the existence of strong solutions
on a small time interval is known (see the results mentioned below), the all-time existence
is an open question, even for small data. 

The reason why it is much more difficult to construct global solutions than 
in the compressible case is that, as can be seen from the definition of the capillary pressure in \eqref{eq:divK},
the highest order part of the capillary term is a gradient and thus acts as a modified pressure, without providing any dissipation to the density while, in the compressible case, it smooths out the density. 
In fact, when studying $(INSK),$ one has  to handle the highly nonlinear term
$\div(k(\rho)\nabla \rho\otimes \nabla \rho)$ without 
the benefit of any regularizing or dissipative properties of the density.
Furthermore, owing to 
the incompressibility condition, there is no part of velocity that can be combined with the density fluctuation so as to glean some regularization effect.



\smallbreak
As in every (reasonable) physical model, one can derive a formal energy functional associated with $(INSK). $
Indeed, 
using the mass equation  and integrating by parts several times we find that
\begin{equation}\label{eq:rhe}
    \frac{1}{2}\frac{d}{dt} \int_{\R^d} k(\rho)\abs{\nabla \rho}^2\,dx=\int_{\R^d} \div(k(\rho)\nabla \rho \otimes \nabla \rho)\cdot u\,dx.
\end{equation}
Then, testing the momentum equation with $u$, we obtain 
\begin{equation*}
\frac{d}{dt}\left(\int_{\R^d} \rho \abs{u}^2\,dx+\int_{\R^d} k(\rho)\abs{\nabla \rho}^2\,dx\right)+ \int_{\R^d} \mu(\rho)\abs{D (u)}^2\,dx=0.
\end{equation*}
In the end, denoting  $$E(t):=\int_{\R^d} \rho(t) \abs{u(t)}^2\,dx+ \int_{\R^d} k(\rho(t))\abs{\nabla \rho(t)}^2\,dx$$
and integrating \eqref{eq:rhe} on $[0,t]$ gives:
\begin{equation}\label{eq:enes}
E(t)+\int_0^t \!\!\int_{\R^d} \mu(\rho(\tau))\abs{\nabla u(\tau)}^2\,dxd\tau=E(0).
\end{equation}
It is not known whether the above energy balance can be used to construct a global 
weak solutions (as in the noncapillary case). 
However, it is possible to use similar ideas to control locally-in-time some Sobolev norms
of the solution. 
For example,  J.~Yang, L.~Yao, and C.~Zhu \cite{YYZ} obtained local unique solutions as well as convergence toward the Euler system as the capillarity and viscosity coefficients go to zero, by assuming   $(\rho_0-\bar \rho,u_0)\in H^4\times H^3$,  for the above system with capillarity term $\kappa \rho \nabla \Delta \rho.$

Lately, T.~Wang \cite{TW} solved the initial boundary problem of the system $(INSK)$ and proved that it  admits a unique local strong solution (with, possibly, vacuum) whenever the initial data $(\rho_0, u_0) \in \W^{2,p}\times (H^1_0\cap H^2)$ satisfies  $\div u_0=0$ and the compatibility condition:
$$-\div(\mu(\rho_0)D(u_0))+\nabla P_0+ \div(k(\rho_0)\nabla \rho_0\otimes \nabla \rho_0 )=\rho_0^{\frac 12}g$$
for some $(P_0,g)\in H^1\times L_2.$
\smallbreak
Recently, A. Jabour and A. Bouidi proved that  if one only considers $u_0$ in $H^1_0,$
then the local existence and uniqueness of strong solution to the initial boundary problem of the system $(INSK)$ in \cite{TW} still holds true 
\emph{without the above compatibility condition.} These articles strongly rely on the $L_2$ structure of the system that
helps to handle the capillary term.
\smallbreak
By other techniques, in  \cite{BGLRS}, the authors provide local solutions in  bounded domains and for  initial data in Sobolev spaces $W^{s}_{p} (p\geq 1)$ which are also in  $L_2$  for the following system:
\begin{equation*}
\left\{\begin{aligned}
&\d_t\rho+\div(\rho u)=0, \\
 &\d_t(\rho u)+\div(\rho u\otimes u)-\mu \Delta u+\nabla P=-\kappa \nabla \rho \Delta \rho \\
&\div u=0,\\
&(\rho,u)|_{t=0}=(\rho_{0},u_{0}).
\end{aligned}\right.
\end{equation*}
It is based on the following relation:
$$\rho \nabla \Delta \rho=\nabla (\rho \Delta \rho)-\nabla \cdot (\nabla \rho \otimes \nabla \rho)+\frac{1}{2}\nabla (\abs{\nabla \rho}^2)=\nabla (\rho \Delta \rho)-\nabla \rho \Delta \rho$$
that allows to include the term $\nabla (\rho \Delta \rho)$ in the pressure term.
\medbreak
Another way to deal with the capillary term is to consider the system $ (INSK)$ in Lagrangian coordinates \cite{BC, CB2017}, after adapting the approach 
that has been  developed in \cite{RD2014,DM2012} for $(INS), $
or in \cite{BC,DM1} for the compressible Navier-Stokes equations.
The advantage of using Lagrangian coordinates is that  
the advection term of the velocity disappears and that,
in the incompressible situation, the density becomes independent of time. Therefore, if we apply the Lagrangian method to the system $(INSK),$ then the corresponding capillary term only depends on the initial density. In \cite{BC}, C.~Burtea and 
F.~Charve obtained local well-posedness in critical Besov space based on Lagrangian change variables, and initial data satisfying:
$$u_0\in \B^{-1+\frac{d}{p}}_{p,1}\!\!\with\!\! \div u_0=0,\quad\rho_0-1,\nabla \rho_0\in \B^{\frac{d}{p}}_{p,1}
\!\!\andf\!\! \underset{x\in \R^d}{\inf}\rho_0(x)>0.$$ Furthermore,  if $\bk:=k(1)$  and $\bmu:=\mu(1),$ then 
they find that the life span $T$ of a smooth solution 
satisfies $T\geq {C\bmu}/{\bk}$ if the initial velocity is small. 
\smallbreak
Here, we also mention the works of Haspot (see \cite{BH1,BH2,BH4, BH}) where the \emph{compressible} Korteweg
system is studied in general settings, or with minimal assumptions on the initial data
(in critical  Besov spaces) for suitable viscosity and capillarity coefficients.
\medbreak
Very recently, the author tried to use the maximal regularity property and regularity estimates for the transport equation to 
consider the system $(INSK).$ In \cite{TSW}, the author obtained the local well-posedness result for data with slightly subcritical regularity and a lower bound for the lifespan of the solutions.
\smallbreak
 In this paper, we improve the proofs in the author's thesis \cite{TSW} to prove the global well-posedness.  By using  the similar maximal regularity property for the heat equation, Stokes equation, and Lam\'e equation in Lorentz spaces as in \cite{DW} for $(INS)$ 
 so as to solve global-in-time $(INSK)$ with general smooth viscosity and capillary coefficients, with small initial data. 
 
 The rest of the paper unfolds as follows. In the next section, we introduce the tools and introduce our main results, additionally, we also show the main idea to deal with the system $(INSK)$. Section \ref{section2} is devoted to the prior estimates of Theorem \ref{them1nsk}.  The proof of the existence and uniqueness part of Theorem \ref{them1nsk} will be established in Section \ref{section3}.


 \medbreak


\section{Tools and results}

Before showing the tools and main results for $(INSK),$ introducing a few notations and recalling some results is in order. 
\medbreak
First, throughout the text,
$A\lesssim B$ means that  $A\leq CB$, where $C$ designates 
 various positive real numbers the value of which does not matter. 
 \smallbreak
For any Banach space $X,$ index $q$ in $[1,\infty]$  and time $T\in[0,\infty],$ we use the notation 
$\|z\|_{L_q(0,T;X)}\overset{\text{def}}{=}
\bigl\| \|z\|_{X}\bigr\|_{L_q(0,T)}.$
If $T=\infty$, we define a similar way the space  $\|z\|_{L_q(\R_+;X)}$ (also
denoted by $\|z\|_{L_q(X)}$). Notation $\|z_1, z_2\|_{L_q(\R_+;X)}:=\|z_1\|_{L_q(\R_+;X)}+\|z_2\|_{L_q(\R_+;X)}. $ In the following, the space $X=X(\R^d),$ if we haven't particular point out. 
In the case where $z$ has $n$ components $z_k$ in $X,$ we 
 keep the notation  $\norm{z}_X$ to 
mean $\sum_{k\in\{1,\cdots,n\}} \norm{z_k}_X$. 
\smallbreak
We shall use the following notation for the \emph{convective derivative}: 
\begin{equation*}
\frac{D}{Dt}\overset{\text{def}}{=}\d_{t}+u\cdot \nabla \andf \t{u}\overset{\text{def}}{=}u_{t}+u\cdot \nabla u.\end{equation*}
 Next, let us recall the definition of Besov spaces on $\R^d.$ 
Following \cite[Chap. 2]{BCD}, we fix two smooth functions $\chi$ and $\varphi$ 
 such that 
$$\displaylines{\text{Supp}\  \varphi \subset \{\xi\in\R^{d},\: 3/4\leq \abs{\xi}\leq 8/3\}\ \ \text{and  } \ \ \forall\xi\in\R^{d}\setminus\{0\}, \;\underset{j\in\Z}{\sum}\varphi(2^{-j}\xi)=1,\cr
\text{Supp}\ \chi\subset\{\xi\in \R^{d},\: \abs{\xi}\leq 4/3\} \ \text{and}\ \ \forall \xi\in \R^{d},\;  \chi(\xi)+\sum_{j\geq 0}\varphi(2^{-j}\xi)=1,}$$
and set for all $j\in\Z$ and  tempered distribution $u,$ 
 $$\dot{\Delta}_{j}u\overset{\text{def}}{=}\mathcal{F}^{-1}(\varphi(2^{-j}\cdot)\wh{u})\overset{\text{def}}{=}2^{jd} \wt{h}(2^{j}\cdot)\star u\with\wt{h}\overset{\text{def}}{=}\mathcal{F}^{-1}\varphi,$$
\begin{equation*}\dot{S}_{j}u\overset{\text{def}}{=}\mathcal{F}^{-1}(\chi(2^{-j}\cdot)\wh{u})\overset{\text{def}}{=}2^{jd}h(2^{j}\cdot)\star u\with h \overset{\text{def}}{=}\mathcal{F}^{-1}\chi,\end{equation*}
where $\mathcal{F}u$ and $\wh{u}$ denote the Fourier transform of $u.$ 

\begin{definition}[Homogeneous Besov spaces]
Let $(p,r)\in [1,\infty]^{2}$ and $s\in\R.$ 
We set 
$$\norm{u}_{\B^{s}_{p,r}(\R^{d})}\overset{\text{def}}{=}\norm{(2^{js}\norm{\dot{\Delta}_{j} u}_{L_{p}(\R^{d})})_{j\in \Z}}_{\mathit{\ell}_{r}(\Z)}$$
and  denote by $\B^{s}_{p,r}(\R^{d})$ the set of tempered distributions $u$ on $\R^d$
 such that  $\norm{u}_{\B^{s}_{p,r}(\R^{d})}<\infty$ and
 \begin{equation}\label{eq:lf}
 \underset{j\to -\infty}{\lim}\norm{\dot S_{j}u}_{L_{\infty}(\R^{d})}=0.
 \end{equation}
 \end{definition}
 \medbreak
Next, we define  Lorentz spaces,   and recall a useful characterization. 
\begin{definition}
Given $f$ a measurable function on a measure space $(X,\nu)$ and $1\leq p,r\leq \infty$, we define 
$$\wt\|{f}\|_{L_{p,r}(X,\nu)}:=
\begin{cases}
(\int_{0}^{\infty}(t^{\frac{1}{p}}f^{*}(t))^{r}\,\frac{dt}{t})^{\frac1r} & \text{if $r<\infty$},\\
\underset{t>0}{\sup} t^{\frac{1}{p}}f^{*}(t)& \text{if $r=\infty$},
\end{cases}$$
where $$f^{*}(t):=\inf\bigl\{s\geq 0:\nu(\{\abs{f}>s\})\leq t\bigr\}\cdotp$$
The set of all $f$ with $\wt\|{f}\|_{L_{p,r}(X,\nu)}<\infty$ is called the Lorentz space with indices $p$ and $r$.
\end{definition}
\begin{remark}\label{lorentzdef2}
The Lorentz space $L_{p,p}(X,\nu)$ coincides with 
the Lebesgue space $L_p(X,\nu).$
In addition, according to \cite[Prop.1.4.9] {LG}, the Lorentz spaces may be endowed with the following (equivalent) quasi-norm:  
\begin{equation*}
\norm{f}_{L_{p,r}(X,\nu)}:=
   \begin{cases}
   p^{\frac{1}{r}}\biggl(\Int_{0}^{\infty}\bigl(s\,\nu(\{\abs{f}>s\})^{\frac{1}{p}}\bigr)^{r}\,\frac{ds}{s}\biggr)^{\frac{1}{r}} & \text{if $r<\infty$}\\
  \underset{s>0}{\sup} s\,\nu(\{\abs{f}>s\})^{\frac{1}{p}}& \text{if $r=\infty$}.
   \end{cases}
\end{equation*}
\end{remark}

The results will strongly rely on  a maximal regularity property  for the following evolutionary Stokes system:
  \begin{equation}\label{eq:stokes}
\left\{\begin{aligned}
 &u_{t}-\mu \Delta u+\nabla P=f &\ \ \text{in }\ \R_+\times \R^{d}, \\
 &\div u=0  &\ \ \text{in }\ \R_+\times \R^{d}, \\
&u|_{t=0}=u_{0} &\ \ \text{in }\  \R^{d},
\end{aligned}\right.
\end{equation}
 and 
 the following Lam\'e system:
\begin{equation}
\label{sys:lema}
\left\{\begin{aligned}
 &  \d_t u-\mathcal{A}u=f&\ \ \text{in }\ \R_+\times \R^{d}, \\
&u|_{t=0}=u_0 &\ \ \text{in }\ \R^{d}, \\
\end{aligned}\right.
\end{equation}
with Lam\'e operator $\mathcal{A}u=\mu\Delta u+\mu'\nabla \div u $ with $\mu>0,$ $\sigma=\mu+\mu'>0.$  

We notice that the above two systems can be rewritten as heat equation and it has been pointed out in  \cite[Prop. 2.1]{DM2} 
 that for the free heat equation,
 if assume that initial data $u_0$ in 
 $\dot B^{2-2/q}_{p,r}(\R^d),$
 then the solution $u$ is such that $u_t$ and $\nabla^2u$ are in $L_{q,r}(\R_+;L_p(\R^d)),$ $u$ in 
 $\dot B^{2-2/q}_{p,r}(\R^d).$ 
 
 To be convenient, we introduce the following function space:
  \begin{equation}\label{eq:W}
  \W^{2,1}_{p,(q,r)}(\R_+\times \R^{d}):=\bigl\{u\in \mathcal{C}(\R_+;\B^{2-2/q}_{p,r}( \R^{d})):u_{t}, \nabla^{2}u\in L_{q,r}(\R_+;L_{p}( \R^{d})) \bigr\}\cdotp\end{equation}
  The maximal regularity property of the heat equation in  \cite[Prop. 2.1]{DM2}  motivate us to present the following property of system \eqref{sys:lema} and  Proposition \ref{propregularity} of system \eqref{eq:stokes} which plays a very important role in Section \ref{section2}. 
\smallbreak
\begin{proposition}
    \label{pro:mrforlame}
    Let $1<p,q< \infty$ and $1\leq r\leq \infty.$ Then, for any $u_{0}\in \B^{2-2/q}_{p,r}(\R^{d})$ and any $f\in L_{q,r}(\R_+;L_{p}(\R^{d})),$ the system
\eqref{sys:lema}
has a unique solution $u$ with 
and\footnote{Only weak continuity holds if $r=\infty.$}  
$u$ in the space $\W^{2,1}_{p,(q,r)}(\R_+;\times \R^{d}).$ 
Furthermore, there exists a constant
$C$ such that
\begin{multline}\label{eq:maxreg11}
\sigma_1^{1-1/q}\norm{u}_{L_{\infty}(\R_+;\B^{2-2/q}_{p,r}(\R^{d}))}+\norm{u_{t}, \sigma_1\nabla^{2}u}_{L_{q,r}(\R_+;L_{p}(\R^{d}))}
\\
\leq C\bigl(\sigma_2^{1-1/q}\norm{u_{0}}_{\B^{2-2/q}_{p,r}(\R^{d})}+\norm{f}_{L_{q,r}(\R_+;L_{p}(\R^{d}))}\bigr)\end{multline}
with $\sigma_1=\min \{\mu, \sigma\}$ and $\sigma_1=\max \{\mu, \sigma\}.$
Then,  if $2/q+d/p>2,$ we have for all $(s,m)$ such that 
$$\frac{d}{2m}+\frac{1}{s}=\frac{d}{2p}+\frac{1}{q}-1 \with s\in(q,\infty)\andf  m\in(p,\infty),$$ it holds that 
\begin{multline}\label{eq:maxreg31}\sigma_1^{1+\frac{1}{s}-\frac{1}{q}}\norm{u}_{L_{s,r}(\R_+;L_{m}(\R^{d}))}\\\leq C\bigl(\sigma_1^{1-1/q}\norm{u}_{L_{\infty}(\R_+;\B^{2-2/q}_{p,r}(\R^{d}))}+\norm{u_{t}, \sigma_1\nabla^{2}u}_{L_{q,r}(\R_+;L_{p}(\R^{d}))}\bigr)\cdotp
\end{multline}
\end{proposition}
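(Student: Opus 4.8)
The plan is to reduce the Lam\'e system \eqref{sys:lema} to two decoupled heat equations via the Helmholtz--Leray decomposition, and then to transfer the heat-equation counterparts of \eqref{eq:maxreg11} and \eqref{eq:maxreg31} established in \cite[Prop.~2.1]{DM2}, keeping careful track of the dependence on the two viscosities $\mu$ and $\sigma$. Let $\mathbb{P}$ denote the Leray projector onto divergence-free vector fields and $\mathbb{Q}=\mathrm{Id}-\mathbb{P}$ its complement. Since $\div\mathbb{P}u=0$ forces $\mathbb{P}(\nabla\div u)=0$ and $\mathbb{P}\Delta u=\Delta\mathbb{P}u$, while $\mathbb{Q}u$ is a gradient so that $\Delta\mathbb{Q}u=\nabla\div\mathbb{Q}u=\nabla\div u$, applying $\mathbb{P}$ and $\mathbb{Q}$ to \eqref{sys:lema} yields
\begin{equation*}
\d_t(\mathbb{P}u)-\mu\Delta(\mathbb{P}u)=\mathbb{P}f,\qquad \d_t(\mathbb{Q}u)-\sigma\Delta(\mathbb{Q}u)=\mathbb{Q}f,
\end{equation*}
i.e. two heat equations with viscosities $\mu$ and $\sigma=\mu+\mu'$. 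Because $\mathbb{P}$ and $\mathbb{Q}$ are homogeneous Fourier multipliers of degree zero, they are bounded on $L_p(\R^d)$ for $1<p<\infty$, hence on $\B^{2-2/q}_{p,r}(\R^d)$ and on $L_{q,r}(\R_+;L_p(\R^d))$ (the Lorentz structure in time being untouched). Existence and uniqueness of $u=\mathbb{P}u+\mathbb{Q}u$ in $\W^{2,1}_{p,(q,r)}$ then follow from \cite[Prop.~2.1]{DM2} applied to each piece.

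To produce the viscosity weights in \eqref{eq:maxreg11}, I would first record how the heat estimate scales. For fixed $\nu>0$ the substitution $v(t,x)\mapsto v(t/\nu,x)$ turns $\d_t v-\nu\Delta v=g$ into a unit-viscosity heat equation, and under $t\mapsto t/\nu$ a Lorentz-in-time quasi-norm transforms as $\|z(\cdot/\nu)\|_{L_{q,r}(\R_+)}=\nu^{1/q}\|z\|_{L_{q,r}(\R_+)}$. Feeding these relations into the unit-viscosity estimate of \cite[Prop.~2.1]{DM2} and multiplying through by $\nu^{1-1/q}$ gives
\begin{equation*}
\nu^{1-1/q}\|v\|_{L_\infty(\R_+;\B^{2-2/q}_{p,r})}+\|\d_t v,\nu\nabla^2 v\|_{L_{q,r}(\R_+;L_p)}\lesssim \nu^{1-1/q}\|v_0\|_{\B^{2-2/q}_{p,r}}+\|g\|_{L_{q,r}(\R_+;L_p)}.
\end{equation*}
Applying this with $\nu=\mu$ to $\mathbb{P}u$ and $\nu=\sigma$ to $\mathbb{Q}u$ and summing, I would obtain \eqref{eq:maxreg11}: on the left one uses $\sigma_1=\min\{\mu,\sigma\}$ with $1-1/q>0$ (valid since $q>1$) to bound $\sigma_1^{1-1/q}\le\mu^{1-1/q},\sigma^{1-1/q}$ and $\sigma_1\le\mu,\sigma$, so the $\sigma_1$-weighted norms of $u$ are dominated by the $\mu$- and $\sigma$-weighted norms of $\mathbb{P}u,\mathbb{Q}u$ (the $\d_t$-term carries no weight and is simply summed); on the right one uses $\sigma_2=\max\{\mu,\sigma\}$ so that $\mu^{1-1/q},\sigma^{1-1/q}\le\sigma_2^{1-1/q}$, together with the projector bounds $\|\mathbb{P}f\|,\|\mathbb{Q}f\|\lesssim\|f\|$ and $\|\mathbb{P}u_0\|,\|\mathbb{Q}u_0\|\lesssim\|u_0\|$.

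For \eqref{eq:maxreg31}, the key observation is that this is a \emph{function-space} embedding, independent of the equation: under $2/q+d/p>2$ and the stated relation between $(s,m)$ and $(p,q)$, \cite[Prop.~2.1]{DM2} furnishes a parabolic embedding $\W^{2,1}_{p,(q,r)}(\R_+\times\R^d)\hookrightarrow L_{s,r}(\R_+;L_m(\R^d))$ with a dimensionless constant. Rescaling it exactly as above (using $\|z(\cdot/\nu)\|_{L_{s,r}}=\nu^{1/s}\|z\|_{L_{s,r}}$) gives, for every $\nu>0$ and every $w\in\W^{2,1}_{p,(q,r)}$,
\begin{equation*}
\nu^{1+1/s-1/q}\|w\|_{L_{s,r}(\R_+;L_m)}\lesssim \nu^{1-1/q}\|w\|_{L_\infty(\R_+;\B^{2-2/q}_{p,r})}+\|\d_t w,\nu\nabla^2 w\|_{L_{q,r}(\R_+;L_p)}.
\end{equation*}
Since this holds for all $\nu>0$, I would simply take $w=u$ and $\nu=\sigma_1$, landing precisely on \eqref{eq:maxreg31}; the hypothesis $2/q+d/p>2$ is exactly what keeps $(s,m)$ in the admissible range (equivalently $2-2/q-d/p<0$), and no further decomposition is needed for this part.

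I expect the main obstacle to be the viscosity bookkeeping in \eqref{eq:maxreg11}: one must check that the $\min$/$\max$ inequalities point in the correct direction \emph{simultaneously} for the $L_\infty$-in-Besov term, the weighted $\nabla^2$ term, the unweighted $\d_t$ term, and the two data terms, and that $1-1/q>0$ so that $\sigma_1^{1-1/q}\le\min\{\mu,\sigma\}^{1-1/q}$. The remaining technical point is the rescaling of the Lorentz-in-time quasi-norms, which should be justified from the distribution-function characterization in Remark \ref{lorentzdef2} rather than by a naive change of variables.
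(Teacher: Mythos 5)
Your proposal is correct and follows essentially the same route as the paper: split the Lam\'e system via the Helmholtz projectors $\p$ and $\q$ into two heat equations with coefficients $\mu$ and $\sigma$, invoke the maximal regularity result of \cite[Prop.~2.1]{DM2} for each piece together with the boundedness of the projectors on $L_{q,r}(\R_+;L_p)$, and recombine $u=\p u+\q u$. Your extra care with the $\sigma_1/\sigma_2$ bookkeeping and the Lorentz-norm rescaling, and your shortcut of applying the scale-invariant embedding directly to $u$ with $\nu=\sigma_1$ for \eqref{eq:maxreg31}, are sound refinements of details the paper leaves implicit.
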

\begin{proof}
Let $\p$ and $\q$ be the Helmholtz projectors defined as
\begin{equation}\label{eq:defpq}\p:={\rm Id}+\nabla (-\Delta)^{-1}\div\andf \q:=-\nabla (-\Delta)^{-1}\div.
\end{equation}
 After applying $\p$ and $\q$ to system \eqref{sys:lema}, it becomes the heat equation of $\p u$ and $\q u$:
   \begin{equation*}
\left\{\begin{aligned}
 &  \d_t (\p u)-\mu\Delta (\p u)=\p f&\ \ \text{in }\ \R_+\times \R^{d}, \\
  &  \d_t (\q u)-\sigma (\q u)=\q f&\ \ \text{in }\ \R_+\times \R^{d}, \\
&(\p u,\q u)|_{t=0}=(\p u_0,\q u_0) &\ \ \text{in }\ \R^{d}, \\
\end{aligned}\right.
\end{equation*}
Then, according to Proposition 2.1 in  \cite{DM2} and using that the projectors $\p,$ $\q$  are continuous on $L_{q,r}(\R_+;L_{p}(\R^{d}))$
gives \eqref{eq:maxreg11} and \eqref{eq:maxreg31} for $\p u$ and $\q u$ with index $\mu$ and $\sigma$ respectively. Note that $u=\p u+\q u$ implies \eqref{eq:maxreg11} and \eqref{eq:maxreg31} for $ u.$
\end{proof}
\medbreak
The main result of this paper is the following global well-posedness result for data with critical regularity.
\begin{theorem}\label{them1nsk} Let $d=2,3.$
    Assume that $1<p,q<d$ with $d/2p+1/q=3/2.$ Let   $s\in(q,\infty),$ $m\in(p,\infty)$ such that 
$$\frac{d}{2m}+\frac{1}{s}=\frac{d}{2p}+\frac{1}{q}-1.$$
Assume that the initial divergence-free velocity  $u_{0}$ is in $\B^{-1+d/p}_{p,1}(\R^{d}),$  
  and that $\rho_{0}$ belongs to $L_{\infty}(\R^{d}),$ $\nabla \rho_{0}$ is in $\B^{-1+d/p}_{p,1}(\R^{d}).$  There exists a constant $c>0$ such that if 
\begin{equation}\label{inidr}
    \norm{\rho_{0}-1}_{L_{\infty}(\R^{d})}, \norm{\nabla \rho_{0}}_{\B^{-1+d/p}_{p,1}(\R^d)}, \norm{u_{0}}_{\B^{-1+d/p}_{p,1}(\R^d)}< c, 
\end{equation}
 then $(INSK)$ has a unique global-in-time  solution 
 $(\rho,u,\nabla P)$  satisfying,
 $u, \nabla \rho\in \dot W^{2,1}_{p,(q,1)}(\R_+\times\R^d),$ $ \nabla P\in L_{q,1}(\R_+;L_p(\R^d)),$
   \begin{equation*}\norm{\rho-1}_{L_{\infty}(\R_+\times\R^{d})}
 =    \norm{\rho_{0}-1}_{L_{\infty}(\R^{d})}< c,
 \end{equation*}
 and  the following properties: 
 \begin{itemize}
 \item     $\nabla u\in L_1(\R_+;L_\infty(\R^d))$ and $u\in L_2(\R_+; L_\infty(\R^d))$;
 \item $tu\in L_\infty(\R_+;\dot B^{1+d/m}_{m,1}(\R^d))$;
 \item $\bigl(u,(tu)_t, \nabla^2(tu),\nabla(tP)\bigr)
 \in L_{s,1}(\R_+;L_m(\R^d))$;
 \item $t^{1/2}u \in L_\infty (\R_+\times \R^d);$
  \item     $\nabla^2 \rho\in L_1(\R_+;L_\infty(\R^d))$ and $\nabla \rho\in L_2(\R_+; L_\infty(\R^d))$;
 \item $t\nabla \rho\in L_\infty(\R_+;\dot B^{1+d/m}_{m,1}(\R^d))$;
 \item $\bigl(\nabla \rho,(t\nabla \rho)_t, \nabla^2(t\nabla \rho)\bigr)
 \in L_{s,1}(\R_+;L_m(\R^d))$;
 \item $t^{1/2}\nabla \rho \in L_\infty (\R_+\times \R^d).$
 \end{itemize}
 \end{theorem}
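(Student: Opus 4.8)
The plan is to linearize $(INSK)$ about the constant state $(\rho,u)=(1,0)$ and to solve it by a fixed-point argument in the maximal-regularity class $\W^{2,1}_{p,(q,1)}$, using Proposition \ref{pro:mrforlame} together with its Stokes and heat analogues as the linear engine. The first observation is that, because $\div u=0$, the mass equation reduces to the pure transport equation $\d_t\rho+u\cdot\nabla\rho=0$; hence $\rho$ is constant along the flow and the $L_\infty$ norm is propagated for free, giving $\norm{\rho-1}_{L_\infty(\R_+\times\R^d)}=\norm{\rho_0-1}_{L_\infty(\R^d)}$ as asserted, and keeping $\mu(\rho)$ and $k(\rho)$ uniformly close to $\bmu$ and $\bk$ throughout the evolution.

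For the velocity I would recast the momentum equation as a constant-coefficient Stokes system $\d_t u-\bmu\Delta u+\nabla P=F$, $\div u=0$, with
\[
F=-(\rho-1)\,\dot u-\rho\,u\cdot\nabla u+\div\bigl((\mu(\rho)-\bmu)D(u)\bigr)-\div\bigl(k(\rho)\nabla\rho\otimes\nabla\rho\bigr),
\]
each term of which either is quadratic in $(u,\nabla\rho)$ or carries a small factor $\rho-1$ or $\mu(\rho)-\bmu$, hence is small in $L_{q,1}(\R_+;L_p)$ once the solution norm is small. Composing with the projectors $\p,\q$ and invoking the Stokes version of Proposition \ref{pro:mrforlame}, the estimate \eqref{eq:maxreg11}, the endpoint gain \eqref{eq:maxreg31}, and standard embeddings would then furnish all the velocity bounds: $u\in\W^{2,1}_{p,(q,1)}$, the time-weighted quantities $tu\in L_\infty(\B^{1+d/m}_{m,1})$, $t^{1/2}u\in L_\infty$, $\nabla u\in L_1(L_\infty)$, $u\in L_2(L_\infty)$, and $\bigl(u,(tu)_t,\nabla^2(tu),\nabla(tP)\bigr)\in L_{s,1}(L_m)$, the algebraic relation $d/2p+1/q=3/2$ being exactly what makes the corresponding product and embedding estimates critical.

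The delicate point, on which the whole result hinges, is to obtain the same maximal-regularity bounds for $\nabla\rho$. Differentiating the transport equation, the vector $a:=\nabla\rho$ solves $\d_t a+u\cdot\nabla a=-(\nabla u)^{\top}a$, which carries \emph{no} dissipation — this is precisely the absence of any regularizing effect on the density stressed in the Introduction. To manufacture a parabolic structure I would pass to the effective unknown $v:=u+\nabla\rho$; since $a$ is curl-free one has $\p v=u$ and $\q v=\nabla\rho$, so $v$ is the natural candidate for the Lam\'e system \eqref{sys:lema} with viscosities $(\bmu,\sigma)$, the divergence-free part carrying the physical viscosity $\bmu$ and the gradient part the auxiliary coefficient $\sigma$. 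Proposition \ref{pro:mrforlame} would then deliver the full $\W^{2,1}_{p,(q,1)}$ bound for $v$, hence simultaneously for $u$ and $\nabla\rho$, together with the weighted estimates for $t\nabla\rho$, $t^{1/2}\nabla\rho$ and $(\nabla\rho,(t\nabla\rho)_t,\nabla^2(t\nabla\rho))$, \emph{provided} the forcing of the $v$-equation — which collects $F$, the advection $\nabla(u\cdot\nabla\rho)$, the pressure gradient, and the compensating term accounting for the missing density diffusion — is controlled in $L_{q,1}(\R_+;L_p)$. This is the main obstacle: the capillary contribution $\div(k(\rho)\nabla\rho\otimes\nabla\rho)$ and the compensating term are both of the highest order and non-dissipative, and must be dominated perturbatively using the smallness of $\norm{\nabla\rho}_{\W^{2,1}_{p,(q,1)}}$ and of $\norm{\rho-1}_{L_\infty}$ together with the time-weighted Lorentz estimate \eqref{eq:maxreg31}. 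It is here that the global smallness \eqref{inidr} is decisive, and the step requires the greatest care.

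Finally I would assemble these bounds into a single solution space — its norm being the sum of the $\W^{2,1}_{p,(q,1)}$ norms of $u$ and $\nabla\rho$ and of the listed weighted norms — and verify from the above that the associated map is a self-map and a contraction on a small ball once $c$ in \eqref{inidr} is taken small, whence global existence; a continuation argument upgrades the local-in-time bounds of \cite{TSW} to global ones. For uniqueness I would either close a Gronwall estimate directly in the contraction norm at slightly lower regularity, or, following \cite{BC,DM2012}, pass to Lagrangian coordinates, where the density is time-independent and the capillary term depends only on $\rho_0$, removing the advection and simplifying the comparison of two solutions. The a priori estimates constitute Section \ref{section2} and the fixed-point and uniqueness arguments Section \ref{section3}.
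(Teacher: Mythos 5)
Your overall architecture --- Lorentz-space maximal regularity for the heat/Stokes/Lam\'e systems, an effective velocity coupling $u$ and $\nabla\rho$, smallness of the data, time-weighted estimates, and Lagrangian coordinates for uniqueness --- is indeed the paper's. But the step on which you yourself say everything hinges, namely how $\nabla\rho$ acquires parabolic regularity, is described in a way that would fail. You propose to read the $v$-equation as a Lam\'e system and to treat ``the compensating term accounting for the missing density diffusion'' as a perturbative source. That compensating term is $\sigma\Delta\q v=\sigma\Delta\nabla\rho$: it is \emph{linear} in the unknown and of exactly the top order controlled by the left-hand side of \eqref{eq:maxreg11}, with coefficient $\sigma$ and no small prefactor. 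Maximal regularity only returns $\sigma\norm{\nabla^2\q v}_{L_{q,1}(L_p)}\leq C(\cdots+\sigma\norm{\Delta\nabla\rho}_{L_{q,1}(L_p)})$ with $C>1$, so the term cannot be absorbed, and smallness of the solution is irrelevant for a linear term. The viscosity in the momentum equation acts only on $u=\p v$, so the Lam\'e structure of the $v$-momentum equation genuinely supplies no dissipation to $\q v$; whatever diffusion you add on the left must be put back on the right at full strength.

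In the paper the dissipation for the density comes from the \emph{mass} equation, not from the momentum equation, and it is exact rather than perturbative: with $v=u+\nu\nabla\log\rho$ one has $\div(\rho\cdot\nu\nabla\log\rho)=\nu\Delta\rho$, so $\d_t\rho-\nu\Delta\rho+\div(\rho v)=0$ identically (this is \eqref{eq:rh}), and differentiating yields a genuine heat equation for $\nabla\rho$ with source $-\nabla\div(\rho v)$, which is then estimated through the $v$-bounds. Your choice $v=u+\nabla\rho$ drops the factor $1/\rho$ and gives $\d_t\rho-\rho\Delta\rho=\cdots$, which is still salvageable since $\rho\approx1$, but you never perform this computation: you rely on the Lam\'e operator instead, which is exactly where the argument breaks. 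Two further divergences from the paper are worth noting. First, the paper works with the \emph{four} equations of \eqref{sys:ruv} simultaneously --- transport for $\rho$, heat for $\nabla\rho$, Stokes for $u$, Lam\'e for $v$ --- keeping the Stokes equation precisely to estimate $\nabla P$ and multiplying the $v$-equation by $1/2$ so that the $\nabla P$ appearing as a source there is absorbed by the $\nabla P/2$ on the left of \eqref{esb2d}; you list the pressure gradient among the forcings of the $v$-equation but give no mechanism to control it. Second, for existence the paper does not run a contraction: it mollifies the data, invokes local existence from the literature, proves uniform bounds in the reflexive spaces $\W^{2,1}_{p,r}$ (to sidestep the non-reflexivity of $L_{q,1}$), and passes to the limit via Arzel\`a--Ascoli, DiPerna--Lions and the Fatou property; a fixed point in your norm would additionally have to confront the loss of a derivative in the map $u\mapsto\rho$ through the transport equation.
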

Proving  Theorem \ref{them1nsk} will be mostly based on 
the maximal regularity result of Proposition \ref{propregularity} 
to estimate the velocity and density.  In order to achieve the  Lipschitz property of the velocity $\nabla u \in L_1(L_\infty)$ and for reasons that will be explained later on 
(in particular the fact that $\dot B^{d/p}_{p,r}(\R^d)$
 embeds in $L_\infty(\R^d)$ and $\dot B^{-1+d/p}_{p,r}(\R^d)$
 embeds in $L_d(\R^d)$ if and only if $r=1$),  
we will need to work at the critical one $\W^{2,1}_{p,(q,1)}(\R_+\times \R^d)$ (with $2-2/q=-1+d/p$).  
\medbreak
In what follows, we will focus on the case where the density tends to $1$
at infinity and, to simplify the presentation, 
we will assume that $\mu(1)=k(1)=1.$
Note that this can be achieved by rescaling 
if $\bmu^2=\bk$ (with $\bmu:=\mu(1)$ and $\bk:= k(1)$)
as one can set $$(\tilde \rho, \tilde u, \tilde P)(t,x):=\left( \rho,\frac{u}{\bmu}, \frac{P}{\bmu^2}\right)
\left(\frac{t}{\bmu^2},\frac{x}{\bmu}\right)\cdotp$$
If this relation is not satisfied, then one cannot assume that both 
coefficients are equal to $1.$ However, proving Theorem \ref{them1nsk} 
works essentially the same.

\section{Prior estimates}\label{section2}

Now, we begin to consider the prior estimates of 
system $(INSK).$ Clearly, if $\mu$ is a constant and  $k\equiv0$ then the 
 system $(INSK)$ is just the incompressible inhomogeneous Navier-Stokes equations $(INS).$
Recently, in \cite{DW}, the author work with R. Danchin completed the uniqueness part of P.~Zhang's solution in \cite{Zhang19} and proved the global existence and uniqueness of solutions to the inhomogeneous incompressible Navier-Stokes system in the case where 
\emph{the initial density $\rho_0$  is discontinuous and the initial velocity $u_0$ 
has critical  regularity.}
\smallbreak
As the works presented in \cite{DW}, the proofs rely on interpolation results, time-weighted estimates, and maximal regularity estimates in Lorentz spaces (with respect to the time variable) for the evolutionary Stokes system.


However, unlike for $(INS),$ although we have the   energy balance \eqref{eq:enes} at our disposal, we do not know how to produce global weak solutions \emph{\`a la Leray}
for $(INSK),$ owing to the capillary term $\div(k(\rho)\nabla\rho\otimes\nabla\rho).$ 
The main difficulty is that   \eqref{eq:enes} 
just ensures a uniform control of  $\|\nabla\rho\|_{L_2}.$
This is not enough to pass to the limit in the capillary term from a sequence of approximate solutions that just satisfy the energy balance \eqref{eq:enes}.

\medbreak
As discussed in \cite{TSW}, recall that in the compressible situation, to overcome this difficulty (to handle the capillary term), it is common practice to use the change of unknown (see e.g. \cite{BH}) with $\nu>0:$
\begin{equation}
    \label{eq:defvc}
    v=u+\nu\nabla \log \rho.
\end{equation}

The advantage is that after this transformation, the equation of $\rho$ becomes a heat equation, namely:
\begin{equation}
    \label{eq:rh}\d_t \rho-\nu\Delta \rho+\div(\rho v)=0.
\end{equation}
On the other hand, from the mass equation, we have
$$\rho(\nabla \log \rho)_t+\rho u\cdot \nabla (\nabla \log \rho)+\div(\rho Du)=0,$$ which together with the momentum equation of $(INSK)$ implies
\begin{equation}
    \label{eq:nskeqvc1}
    \rho v_t+\rho v\cdot \nabla v- \div(\mu(\rho)D(v))+\nabla P=\phi
\end{equation}
where 
\begin{multline}
      \label{eq:deff}
\phi:=
-\div(k(\rho)\nabla \rho \otimes \nabla \rho)-\nu\nabla \rho \cdot Dv+\nu\nabla \rho\cdot \nabla v\\+\nu^2 \nabla \rho \cdot \nabla(\nabla \log \rho)+
2\nu\div(\mu(\rho)\nabla (\nabla \log \rho))
\end{multline}
with
\begin{multline}
\label{eq:divmr}
    \div(\mu(\rho)\nabla (\nabla \log \rho))=-\frac{\mu'(\rho)}{\rho^2}\abs{\nabla \rho}^2\nabla \rho+\frac{\mu'(\rho)}{\rho}\nabla \rho \cdot \nabla (\nabla \rho)\\
    +2\frac{\mu(\rho)}{\rho^3}\abs{\nabla \rho}^2\nabla \rho-\frac{\mu(\rho)}{\rho^2}\Delta \rho\cdot \nabla \rho-\frac{2\mu(\rho)}{\rho^2}\nabla \rho \cdot \nabla (\nabla \rho)+\frac{\mu(\rho)}{\rho^2}\rho\Delta \nabla \rho.
\end{multline}
\begin{remark}
    For the last term of \eqref{eq:divmr}, 
    when applying the maximal regularity to \eqref{eq:rh} and \eqref{eq:nskeqvc1}, we cannot bound $\frac{\mu(\rho)}{\rho}\Delta \nabla \rho.$
    Therefore, we use $\frac{\mu(\rho)}{\rho^2}\rho\Delta \nabla \rho$ to replace it.
\end{remark}

\medbreak
As $\div v$ is not equal to zero, we may not directly use Proposition \ref{pro:mrforlame} or Proposition \ref{propregularity} to handle the pressure term. 
Therefore, to deal with the pressure term,  we rewrite the momentum equation of $(INSK)$ as a Stokes system with source term:
\begin{equation}
\label{eq:uh}
    \d_t u-\Delta u+\nabla P=-(\rho-1)\d_t u-\rho u\cdot \nabla u+g
\end{equation}
with $$g=\div((\mu(\rho)-\mu(1))D(u))-\div(k(\rho)\nabla \rho\otimes \nabla \rho).$$
We then infer that, by using the transformation \eqref{eq:defvc}, the system $(INSK)$ be rewritten as:
\begin{equation}
\label{sys:ruv}
\left\{\begin{aligned}
&\rho_t+\div(\rho u)=0,\\
&\d_t\nabla \rho-\nu\Delta \nabla \rho=-\nabla\div(\rho v),
\\
 &  \d_t u-\Delta u+\nabla P=-(\rho-1)\d_t u-\rho u\cdot \nabla u+g, \\
 &  \d_t v-\Delta v-\nabla \div v=-(\rho-1)\d_t v-\rho v\cdot \nabla v+\phi -\nabla P+\div((\mu(\rho)-\mu(1))D(v)),\\
&\div u=0,\\
&(\rho,u)|_{t=0}=(\rho_{0},u_{0}),
\end{aligned}\right.
\end{equation}
where the equation of $\nabla \rho$ is a heat equation, the equation of $u$ is Stokes equation, the equation $v$ is a Lam\'e equation with $\mu=\mu'=1$. 
In the following, we obtain the prior estimates of $(INSK)$ by considering the prior estimates of \eqref{sys:ruv}.

\begin{remark}
  Without loss of generality, in the proof of the prior estimates, we choose $\nu=1.$
\end{remark}

\begin{proposition}\label{prop0d2}
Let $(\rho, u, v)$ be a smooth solution of  \eqref{sys:ruv} on $\R_+\times\R^d$
with sufficiently decaying velocity, and density satisfying
\begin{equation}\label{eq:smallrho1}\sup_{t\in\R_+}\norm{\rho(t)-1}_{L_{\infty}}\leq c.\end{equation}
  Then, for  all indices  $1<p,q,m,s<\infty$ satisfying 
 \begin{equation}
     \label{eq:defsm}
    \frac{d}{2p}+\frac{1}{q}=\frac{3}{2} \andf \frac{d}{2m}+\frac{1}{s}=\frac 12 \with   s\in(q,\infty), m\in(p,\infty),
 \end{equation}
 it holds that 
\begin{multline}\label{eq:u}
\norm{u, \nabla \rho, v/2}_{L_{\infty}(\R_+;
\B^{-1+2/p}_{p,1})}+\norm{u_{t}, (\nabla \rho)_t, (v/2)_t, \nabla^{2} u, \nabla^{2} (\nabla \rho), \nabla^{2} v/2, \nabla P/2}_{L_{q,1}(\R_+;L_{p})}\\+\norm{u, \nabla \rho, v/2}_{L_{s,1}(\R_+;L_{m})}
\leq C \norm{u_{0},\nabla \rho_0}_{\B^{-1+d/p}_{p,1}}
\end{multline}
for a positive constant $C.$  Furthermore, we have
  \begin{equation}\label{eq:dotu2} 
    \norm{\t{u}}_{L_{q,1}(\R_+;L_{p})} \leq C \norm{u_{0},\nabla \rho_0}_{\B^{-1+d/p}_{p,1}}
\end{equation}
and 
 \begin{equation}\label{eq:uLinfty}\norm{u, \nabla \rho, v }_{L_2(0,T;L_{\infty})} \leq C \norm{u_{0},\nabla \rho_0}_{\B^{-1+d/p}_{p,1}}\cdotp\end{equation}
\end{proposition}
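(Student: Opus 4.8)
The plan is to run a global continuity (bootstrap) argument for the quantity $X(T)$ defined as the left-hand side of \eqref{eq:u} restricted to the interval $[0,T]$, together with the auxiliary quantities appearing in \eqref{eq:dotu2}--\eqref{eq:uLinfty}. Writing $X_0:=\norm{u_0,\nabla\rho_0}_{\B^{-1+d/p}_{p,1}}$, the target is a nonlinear inequality of the shape $X(T)\le CX_0+C\,(c+X(T))\,X(T)$, valid on every interval where the solution is smooth. Since $T\mapsto X(T)$ is continuous with $X(0)\lesssim X_0$, the smallness of $X_0$ and of the constant $c$ in \eqref{eq:smallrho1} then forces $X(T)\le 2CX_0$ uniformly in $T$, which is exactly \eqref{eq:u}; the bounds \eqref{eq:dotu2}--\eqref{eq:uLinfty} come out along the way.

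\emph{Step 1 (linear maximal regularity).} I would apply, on $[0,T]$, the three maximal-regularity estimates already at our disposal: the heat estimate of \cite[Prop. 2.1]{DM2} to the $\nabla\rho$-equation of \eqref{sys:ruv}, Proposition \ref{propregularity} to the Stokes system for $u$, and Proposition \ref{pro:mrforlame} to the Lam\'e system for $v$ (here $\mu=\mu'=1$, so $\sigma=2$). Each controls the corresponding block of $X(T)$ — including the $L_{s,1}(L_m)$ pieces, via the embedding \eqref{eq:maxreg31}, which applies since $2/q+d/p=3>2$ — by $CX_0$ plus the $L_{q,1}(0,T;L_p)$-norm of the relevant right-hand side. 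Summing yields $X(T)\le CX_0+C\|\text{RHS}\|_{L_{q,1}(0,T;L_p)}$, where RHS collects all the source terms of \eqref{sys:ruv}.

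\emph{Step 2 (nonlinear source terms; the main obstacle).} One then estimates each source in $L_{q,1}(L_p)$. The terms $(\rho-1)u_t$, $(\rho-1)v_t$ and $\div((\mu(\rho)-1)D(u))$, $\div((\mu(\rho)-1)D(v))$ carry a factor $\rho-1$ (or $\mu(\rho)-\mu(1)$), bounded in $L_\infty$ by $\lesssim c$ through \eqref{eq:smallrho1}; these are $\lesssim c\,X(T)$ and are absorbed once $c$ is small. The convective terms $\rho u\cdot\nabla u$ and $\rho v\cdot\nabla v$ are handled by a H\"older split in Lorentz spaces, placing the velocity in $L_2(0,T;L_\infty)$ (via \eqref{eq:uLinfty}, proved below) and its gradient in the $X(T)$-norm, hence are $\lesssim X(T)^2$. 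The genuinely delicate part is the capillary forcing and the remaining terms of $\phi$ in \eqref{eq:deff}--\eqref{eq:divmr}, which are quadratic and cubic in $\nabla\rho$ and carry up to two further derivatives (e.g. $\nabla\rho\cdot\nabla(\nabla\rho)$ and $\abs{\nabla\rho}^2\nabla\rho$): I would again split in Lorentz--H\"older, assigning the top-order factor $\nabla^2(\nabla\rho)$ to the maximal-regularity norm in $X(T)$ and the remaining factors $\nabla\rho$ to $L_2(0,T;L_\infty)$ (and, for the cubic terms, the $t^{1/2}$-weighted $L_\infty$ control), so that every such term is at least quadratic in $X(T)$. The second, structural difficulty is the apparent top-order coupling between the two diffusive equations: the source $-\nabla\div(\rho v)$ of the $\nabla\rho$-equation contains $\nabla^2v$, while $\phi$ contains a multiple of $\Delta\nabla\rho$ through the last term of \eqref{eq:divmr}. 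I expect this to be resolved by the exact Helmholtz structure of the change of variables, namely $\p v=u$ and $\q v=\nabla\log\rho$ (since $\div u=0$ and $\nu=1$), so that the seemingly top-order pieces are either redundant copies of quantities already in $X(T)$ or again carry the factor $\rho-1$; the explicit weight $v/2,\ \nabla P/2,\ \nabla^2v/2$ in $X(T)$ is precisely what lets the surviving $O(1)$ coupling be absorbed into the left-hand side.

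\emph{Step 3 (auxiliary bounds).} The estimate \eqref{eq:uLinfty} would be obtained by interpolation: using $\norm{u}_{L_\infty}\lesssim\norm{u}_{\B^{0}_{\infty,1}}$ with $\B^{0}_{\infty,1}\hookrightarrow L_\infty$, and interpolating between the endpoints $u\in L_\infty(\B^{-1+d/p}_{p,1})$ and $u\in L_{s,1}(L_m)$ (equivalently a Gagliardo--Nirenberg split exploiting $\nabla^2u\in L_{q,1}(L_p)$), the scaling relations in \eqref{eq:defsm} are exactly those placing the resulting product in $L_2(0,T;L_\infty)$; the same argument applies verbatim to $\nabla\rho$ and $v$. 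Finally \eqref{eq:dotu2} follows by writing $\t u=u_t+u\cdot\nabla u$, bounding $u_t$ directly by the $X(T)$-norm and $u\cdot\nabla u$ by the Lorentz--H\"older product estimate of Step 2. Since \eqref{eq:uLinfty} feeds back into Step 2, in practice Steps 2 and 3 are run simultaneously inside the single bootstrap inequality for $X(T)$.
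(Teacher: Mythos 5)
Your overall architecture --- maximal regularity for the three blocks of \eqref{sys:ruv}, Lorentz--H\"older product estimates for the sources, and a bootstrap on $X(T)$ --- is the same as the paper's. Two of your choices are harmless variants: for the convection and capillary products the paper pairs the low-order factor with $L_\infty(\R_+;L_d)$ (via $\B^{-1+d/p}_{p,1}\hookrightarrow L_d$) and the top-order factor with $\nabla^2(\cdot)\in L_{q,1}(\R_+;L_p)$ through $\W^1_p\hookrightarrow L_{p^*}$, rather than your $L_2(0,T;L_\infty)\times(\text{gradient})$ split; and it derives \eqref{eq:uLinfty} a posteriori by Gagliardo--Nirenberg instead of feeding it back into the bootstrap.

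The genuine gap is in your treatment of the two-way top-order coupling, which is the crux of the proposition. The source of the $\nabla\rho$-equation contains $\rho\,\nabla\div v$ (see \eqref{eq:ndrv}) and $\phi$ contains $2\nu\,\mu(\rho)\rho^{-1}\Delta\nabla\rho$ (last term of \eqref{eq:divmr}); both prefactors are $O(1)$ since $\rho\approx 1$. Neither of your two proposed mechanisms closes this. First, a constant weight on the $v$-block can only absorb a \emph{one-way} coupling --- this is exactly what the $1/2$ does for $\nabla P$, which is estimated only by the Stokes block and appears as a source only in the Lam\'e block; for a two-way coupling of the form $\norm{\nabla^2\nabla\rho}\le C\norm{\nabla^2 v}+\dots$ and $\norm{\nabla^2 v}\le C\norm{\nabla^2\nabla\rho}+\dots$, with $C$ the (not small) maximal-regularity constant, no choice of weight produces a contraction. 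Second, invoking $\q v=\nu\nabla\log\rho$ to declare these terms ``redundant copies'' amounts to expanding $\div v$ back into $\nu\Delta\log\rho$, which exactly cancels the artificial diffusion of the $\nabla\rho$-equation (that heat structure exists only because $\div v$ is kept as a second derivative of $v$, controlled by the Lam\'e estimate), so nothing is gained. The paper's actual device is inequality \eqref{eq:esimpot}: after rewriting the last term of \eqref{eq:divmr} as $\mu(\rho)\rho^{-2}\cdot\rho\Delta\nabla\rho$, the prefactor $\norm{\rho}_{L_\infty}$ is bounded by $\norm{\nabla\rho}_{L_\infty(\R_+;\B^{-1+d/p}_{p,1})}$, i.e.\ by $X$ itself, so that \emph{both} directions of the coupling become quadratic and enter the bootstrap like every other nonlinearity. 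Your proposal is missing this step (or a substitute for it), and without it the inequality $X\le CX_0+C(c+X)X$ you aim for is not obtained.
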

\begin{proof} 
For system \eqref{sys:ruv}, by separately applying  Proposition 2.1 in \cite{DM2} to the equation of $\nabla \rho,$ Proposition \ref{propregularity} to the equation of $u,$   Proposition \ref{pro:mrforlame} to the equation of $v$ and multiplying the equation of $v$ with $1/2$ (to handle the pressure term in the equation of $v$),  gives us
 \begin{multline}\label{esb2d}
     \norm{u, \nabla \rho, v/2}_{L_{\infty}(\R_+;
\B^{-1+d/p}_{p,1})}+\norm{u,\nabla \rho, v/2}_{L_{s,1}(\R_+;L_{m})}\\
+\norm{u_{t}, \nabla^{2} u, v_{t}/2, \nabla^{2} v/2,(\nabla \rho)_{t}, \nabla^{2}(\nabla \rho),\nabla P/2}_{L_{q,1}(\R_+;L_{p})}\\
\lesssim \norm{u_{0}, \nabla \rho_0}_{\B^{-1+d/p}_{p,1}}+\norm{(\rho-1)u_{t}+\rho u\cdot \nabla u}_{L_{q,1}(\R_+;L_{p})}+\norm{g}_{L_{q,1}(\R_+;L_{p})}\\
+\norm{(\rho-1)v_{t}+\rho v\cdot \nabla v}_{L_{q,1}(\R_+;L_{p})}+\norm{\nabla(\div (\rho v))}_{L_{q,1}(\R_+;L_{p})}\\+\norm{\phi}_{L_{q,1}(\R_+;L_{p})}+\norm{\div((\mu(\rho)-\mu(1))D(v))}_{L_{q,1}(\R_+;L_{p})}\cdotp
 \end{multline}
By H\"older inequality, we have
$$\displaylines{\quad
\norm{(\rho-1)u_{t}+\rho u\cdot \nabla u}_{L_{q,1}(\R_+;L_{p})}+
\norm{(\rho-1)v_{t}+\rho v\cdot \nabla v}_{L_{q,1}(\R_+;L_{p})}\hfill\cr\hfill\leq
\norm{\rho-1}_{L_{\infty}(\R_+\times\R^{d})}(\norm{u_{t}}_{L_{q,1}(\R_+;L_{p})}+\norm{v_t}_{L_{q,1}(\R_+;L_{p})})\hfill\cr\hfill+\norm{\rho}_{L_{\infty}(\R_+\times\R^{d})}(\norm{u\cdot \nabla u}_{L_{q,1}(\R_+;L_{p})}+\norm{v\cdot \nabla v}_{L_{q,1}(\R_+;L_{p})}).\quad}
$$
If $c$ is small enough in \eqref{eq:smallrho1}, then the second line can be absorbed by the left-hand side of \eqref{esb2d}.
 For  the convection term, by H\"older inequality and embedding 
 \begin{equation}
 \label{eq:embld}
      \W^1_p(\R^d)\hookrightarrow L_{p^*}(\R^d) \with \frac{1}{p^*}=\frac 1p-\frac 1d,
 \end{equation}
  we  have
 \begin{align*}
     \norm{u\cdot \nabla u}_{L_{q,1}(\R_+;L_{p})}\leq \norm{u}_{L_{\infty}(\R_+;L_{d})}\norm{\nabla^2 u}_{L_{q,1}(\R_+;L_{p})},\\
      \norm{v\cdot \nabla v}_{L_{q,1}(0,T;L_{p})}\leq \norm{v}_{L_{\infty}(\R_+;L_{d})}\norm{\nabla^2 v}_{L_{q,1}(\R_+;L_{p})}.
 \end{align*}
For the last term in the right hand side of \eqref{esb2d}, the first-order Taylor formula gives
\begin{equation}
\label{eq:mr}
    \div((\mu(\rho)-\mu(1))D(v))=\mu'(\rho) \nabla \rho \cdot D(v)+\mu'(\theta+(1-\theta )\rho)(\rho-1) (\Delta v+\nabla \div v)
\end{equation}
with $0<\theta<1.$
Hence, note that the fact $\nabla \div=\q \Delta$ with $\q$ is the Helmholtz projector  and continuous on $L_{q,1}(L_p)$, we drives from $\mu$ is a smooth function that
\begin{multline*}
    \|\div((\mu(\rho)-\mu(1))D(v))\|_{L_{q,1}(\R_+;L_{p})}\lesssim  \|\Delta v\|_{L_{q,1}(\R_+;L_{p})}\|\rho-1\|_{L_{\infty}(\R_+\times \R^d)}\\
    +\|\nabla \rho\cdot D(v)\|_{L_{q,1}(\R_+;L_{p})}.
\end{multline*}
By a similar proof to the convection term, the estimate of the last term in the above inequality is as follows: 
$$\|\nabla \rho\cdot D(v)\|_{L_{q,1}(\R_+;L_{p})}\leq \|\nabla \rho\|_{L_{\infty}(\R_+;L_{d})}\|D(v)\|_{L_{q,1}(\R_+;L_{p^*})},$$ then use \eqref{eq:embld} again to get
\begin{multline*}
    \|\div((\mu(\rho)-\mu(1))D(v))\|_{L_{q,1}(\R_+;L_{p})}\lesssim\|\nabla \rho\|_{L_{\infty}(\R_+;L_{d})}\|\nabla^2 v\|_{L_{q,1}(\R_+;L_{p})}\\
    +\|\Delta v\|_{L_{q,1}(\R_+;L_{p})}\|\rho-1\|_{L_{\infty}(\R_+\times \R^d)}.
\end{multline*}
Similarly, note that $\div u=0,$ it is easy to check that 
\begin{multline}
\label{eq:muu}
    \|\div((\mu(\rho)-\mu(1))D(u))\|_{L_{q,1}(\R_+;L_{p})}\lesssim\|\nabla \rho\|_{L_{\infty}(\R_+;L_{d})}\|\nabla^2 u\|_{L_{q,1}(\R_+;L_{p})}\\
    +\|\Delta u\|_{L_{q,1}(\R_+;L_{p})}\|\rho-1\|_{L_{\infty}(\R_+\times \R^d)}.
\end{multline}

Now, to handle the capillary term on the right hand side of \eqref{esb2d}, we first need the decomposition
\begin{equation}
    \label{eq:krho}
     \div(k(\rho)\nabla \rho \otimes \nabla \rho)=\\
    k'(\rho)|\nabla \rho|^2 \nabla \rho+k(\rho) \Delta \rho\cdot \nabla \rho+k(\rho)\nabla \rho\cdot \nabla^2 \rho.
\end{equation}
Then, in light of \eqref{eq:embld} one get
\begin{equation}\label{eq:capes}
    \begin{aligned}
       \|\nabla \rho\cdot \nabla^2 \rho,~ \Delta \rho\cdot \nabla \rho\|_{L_{q,1}(\R_+;L_{p})}&\leq \|\nabla \rho\|_{L_{\infty}(\R_+;L_{d})}\|\nabla^2 \rho\|_{L_{q,1}(\R_+;L_{p*})}\\&\lesssim \|\nabla \rho\|_{L_{\infty}(\R_+;L_{d})}\|\nabla^2(\nabla \rho)\|_{L_{q,1}(\R_+;L_{p})} ;\\
      \||\nabla \rho|^2 \nabla \rho\|_{L_{q,1}(\R_+;L_{p})}&\leq \|\nabla \rho\|_{L_{\infty}(\R_+;L_{d})}\||\nabla \rho|^2\|_{L_{q,1}(\R_+;L_{p^*}}) \\
      &\lesssim \|\nabla \rho\|_{L_{\infty}(\R_+;L_{d})}\|\nabla \rho\cdot \nabla^2\rho\|_{L_{q,1}(\R_+;L_{p})}\\
      &\lesssim \|\nabla \rho\|^2_{L_{\infty}(\R_+;L_{d})}\|\nabla^2(\nabla \rho)\|_{L_{q,1}(\R_+;L_{p})}.
    \end{aligned}
\end{equation}

Hence, $k$ is a smooth function leads to
\begin{equation}
\label{eq:kr}
\begin{aligned}
    &\|\div(k(\rho)\nabla \rho \otimes \nabla \rho)\|_{L_{q,1}(\R_+;L_{p})}\\ \lesssim& \|\nabla^2(\nabla \rho)\|_{L_{q,1}(\R_+;L_{p}(\R^d))}\|\nabla \rho\|_{L_{\infty}(\R_+;L_{d})}(1+\|\nabla \rho\|_{L_{\infty}(\R_+;L_{d})}).
\end{aligned}
\end{equation}
Furthermore, we can conclude from \eqref{eq:muu} and \eqref{eq:kr} that
\begin{multline*}
    \|g\|_{L_{q,1}(\R_+;L_{p})}\lesssim \|\nabla \rho\|_{L_{\infty}(\R_+;L_{d})}\|\nabla^2 u\|_{L_{q,1}(\R_+;L_{p})}
    +\|\Delta u\|_{L_{q,1}(\R_+;L_{p})}\|\rho-1\|_{L_{\infty}(\R_+\times \R^d)}\\
    \|\nabla^2(\nabla \rho)\|_{L_{q,1}(\R_+;L_{p}(\R^d))}\|\nabla \rho\|_{L_{\infty}(\R_+;L_{d})}(1+\|\nabla \rho\|_{L_{\infty}(\R_+;L_{d})}).
\end{multline*}
Let us turn to the estimates of $\phi,$ at first, we then have by \eqref{eq:embld}
\begin{equation}
\label{eq:esrestf}
    \begin{aligned}
     \|\nabla \rho\cdot Dv\|_{L_{q,1}(\R_+;L_{p})}\lesssim \|\nabla \rho\|_{L_{\infty}(\R_+;L_{d})}\|\nabla^2 v\|_{L_{q,1}(\R_+;L_{p})};\\
     \|\nabla \rho\cdot \nabla v\|_{L_{q,1}(\R_+;L_{p})}\lesssim \|\nabla \rho\|_{L_{\infty}(\R_+;L_{d})}\|\nabla^2 v\|_{L_{q,1}(\R_+;L_{p})}.
\end{aligned}
\end{equation}
In what follows, to handle the last term in \eqref{eq:divmr} , the following inequality  from embedding  $\B^{d/p}_{p,1}(\R^d)\hookrightarrow L_\infty (\R^d)$ and  Proposition \ref{dervieq} plays a key role:
\begin{equation}
    \label{eq:esimpot}
    \|\rho\|_{L_\infty(\R_+\times \R^d)}\lesssim  \|\rho\|_{L_\infty(\R_+;  \B^{d/p}_{p,1})}\lesssim \|\nabla \rho\|_{L_\infty(\R_+; \B^{-1+d/p}_{p,1})},
\end{equation}
we emphasize that this inequality will also be frequently used to deal with the source term of the heat equation of $\nabla \rho.$
Note that the decomposition 
\begin{equation}
\label{eq:nnr}
\nabla(\nabla \log \rho)=-\frac{1}{\rho^2} \abs{\nabla \rho}^2+\frac{1}{\rho} \nabla^2 \rho,
\end{equation}
and \eqref{eq:divmr}, by taking advantage of \eqref{eq:capes},  \eqref{eq:esimpot},  and $\mu$ is a smooth function, we hence deduce
\begin{multline}
    \label{eq:esrest}
    \|\nabla(\nabla \log \rho)\|_{L_{q,1}(\R_+;L_{p})}+\|\div(\mu(\rho) \nabla(\nabla \log \rho))\|_{L_{q,1}(\R_+;L_{p})}
    \\
    \lesssim \|\nabla^2(\nabla \rho)\|_{L_{q,1}(\R_+;L_{p})}\|\nabla \rho\|_{L_{\infty}(\R_+;L_{d})}(1+\|\nabla \rho\|_{L_{\infty}(\R_+;L_{d})})\\+\|\nabla \rho\|_{L_\infty(\R_+; \B^{-1+d/p}_{p,1})}\|\nabla^2(\nabla \rho)\|_{L_{q,1}(\R_+;L_{p})}
\end{multline}
Lastly, we finish the estimates of $\phi$ by \eqref{eq:kr}, \eqref{eq:esrestf} and \eqref{eq:esrest},
\begin{multline*}
    \|\phi\|_{L_{q,1}(\R_+;L_{p})}\lesssim \|\nabla^2(\nabla \rho)\|_{L_{q,1}(\R_+;L_{p})}\|\nabla \rho\|_{L_{\infty}(\R_+;L_{d})}(1+\|\nabla \rho\|_{L_{\infty}(\R_+;L_{d})})\\+\|\nabla \rho\|_{L_{\infty}(\R_+;L_{d})}\|\nabla^2 v\|_{L_{q,1}(\R_+;L_{p})}+\|\nabla \rho\|_{L_\infty(\R_+; \B^{-1+d/p}_{p,1})}\|\nabla^2(\nabla \rho)\|_{L_{q,1}(\R_+;L_{p})}.
\end{multline*}

Now, we start to bound the source term of the heat equation of $\nabla \rho.$ It is clear that we have
\begin{equation}
    \label{eq:ndrv}
    \nabla \div(\rho v)=v\cdot \nabla (\nabla \rho)+\nabla \rho\cdot \nabla v+\nabla \rho\cdot \div v+\rho \nabla \div v.
\end{equation}
Thanks to the embeddings \eqref{eq:embld} and \eqref{eq:esimpot}, one get
\begin{align*}
 \|\nabla \rho\cdot \div v\|_{L_{q,1}(\R_+;L_{p})}&\lesssim \|\nabla \rho\|_{L_{\infty}(\R_+;L_{d})}\|\nabla^2 v\|_{L_{q,1}(\R_+;L_{p})}\\
    \norm{v\cdot \nabla (\nabla \rho)}_{L_{q,1}(\R_+;L_{p})}&\lesssim \norm{v}_{L_{\infty}(\R_+;L_{d})}\norm{\nabla^2 (\nabla \rho)}_{L_{q,1}(\R_+;L_{p})},\\ 
    \norm{\rho \nabla \div v}_{L_{q,1}(\R_+;L_{p})}&\lesssim  \norm{\rho}_{L_{\infty}(\R_+\times \R^d)}\norm{\nabla^2 v}_{L_{q,1}(\R_+;L_{p})},\\
    &\lesssim  \norm{\nabla \rho}_{L_{\infty}(\R_+;\B^{-1+d/p}_{p,1})}\norm{\nabla^2 v}_{L_{q,1}(\R_+;L_{p})}
\end{align*}
from which together with \eqref{eq:esrestf} gives
\begin{multline*}
    \norm{\nabla(\div (\rho v))}_{L_{q,1}(\R_+;L_{p})}\lesssim \|\nabla \rho\|_{L_{\infty}(\R_+;L_{d})}\|\nabla^2 v\|_{L_{q,1}(\R_+;L_{p})}\\
    +\norm{v}_{L_{\infty}(\R_+;L_{d})}\norm{\nabla^2 (\nabla \rho)}_{L_{q,1}(\R_+;L_{p})}+\norm{\nabla \rho}_{L_{\infty}(\R_+;\B^{-1+d/p}_{p,1})}\norm{\nabla^2 v}_{L_{q,1}(\R_+;L_{p})}
\end{multline*}

Putting all the estimates together and  recalling $\rho$ such that \eqref{eq:smallrho1}, then all the terms with $\|\rho-1\|_{L_\infty(\R_+\times \R^d)}$ can be absorbed by the left hand of \eqref{esb2d} and we see that
 \begin{multline*}
     \norm{u, \nabla \rho, v/2}_{L_{\infty}(\R_+;
\B^{-1+2/p}_{p,1})}+\norm{u,\nabla \rho, v/2}_{L_{s,1}(\R_+;L_{m})}\\
+\norm{u_{t}, \nabla^{2} u, v_{t}/2, \nabla^{2} v/2,(\nabla \rho)_{t}, \nabla^{2}(\nabla \rho),\nabla P/2}_{L_{q,1}(\R_+;L_{p})}\\
\leq C\bigl(\norm{u_{0}, \nabla \rho_0}_{\B^{-1+2/p}_{p,1}}+\norm{u}_{L_{\infty}(\R_+;L_{d})}\norm{\nabla^2 u}_{L_{q,1}(\R_+;L_{p})}+\norm{v}_{L_{\infty}(\R_+;L_{d})}\norm{\nabla^2 v}_{L_{q,1}(\R_+;L_{p})}\\
+ \|\nabla \rho\|_{L_{\infty}(\R_+;L_{d})}(\|\nabla^2 u\|_{L_{q,1}(\R_+;L_{p})}+\|\nabla^2 v\|_{L_{q,1}(\R_+;L_{p})})\\
+\|\nabla \rho\|_{L_\infty(\R_+; \B^{-1+d/p}_{p,1})}\|\nabla^2(\nabla \rho)\|_{L_{q,1}(\R_+;L_{p})}\\
+
\|\nabla^2(\nabla \rho)\|_{L_{q,1}(\R_+;L_{p})}\|\nabla \rho\|_{L_{\infty}(\R_+;L_{d})}(1+\|\nabla \rho\|_{L_{\infty}(\R_+;L_{d})})\\
+\norm{v}_{L_{\infty}(\R_+;L_{d})}\norm{\nabla^2 (\nabla \rho)}_{L_{q,1}(\R_+;L_{p})}+\norm{\nabla \rho}_{L_{\infty}(\R_+;\B^{-1+d/p}_{p,1})}\norm{\nabla^2 v}_{L_{q,1}(\R_+;L_{p})}\bigr)\cdotp
 \end{multline*}
Denote 
\begin{multline*}
    X=:\underset{t\in \R_+}{\sup} \norm{u(t), \nabla \rho(t), v(t)/2}_{
\B^{-1+2/p}_{p,1}}+\norm{u,\nabla \rho, v/2}_{L_{s,1}(\R_+;L_{m})}\\
+\norm{u_{t}, \nabla^{2} u, v_{t}/2, \nabla^{2} v/2,(\nabla \rho)_{t}, \nabla^{2}(\nabla \rho),\nabla P/2}_{L_{q,1}(\R_+;L_{p})}.
\end{multline*}
Finally, in the 2D case, by virtue of \eqref{eq:enes} and embedding
\begin{equation}
    \label{eq:emdld}
    \B^{-1+d/p}_{p,1}(\R^d)\hookrightarrow L_d(\R^d),
\end{equation}
we can write that
$$X\leq C(X_0+(1+E_0)X^2).$$
Thus, by a bootstrap argument, one can deduce that if
\begin{equation}
\label{eq:ini2d}
    4CX_0(1+E_0)\leq 1,
\end{equation}
then we get
\begin{equation}
     \label{eq:esxd2}
    X\leq 2X_0.
\end{equation}
   
Furthermore, in the three-dimensional case, the embedding \eqref{eq:emdld} guarantees that
$$X\leq C(X_0+X^2+X^3).$$
Using again the  standard bootstrap argument and assuming that if (with a different constant $C$)
\begin{equation}
    \label{eq:ini3d}
    CX_0\leq1,
\end{equation}
then one has
 \begin{equation}
    \label{eq:esxd3}
    X\lesssim X_0.
\end{equation}

In order to prove \eqref{eq:dotu2}, it suffices to use the fact that
$$\begin{aligned}
  \norm{\t{u}}_{L_{q,1}(\R_+;L_{p})} &\leq  \norm{u_{t}}_{L_{q,1}(\R_+;L_{p})}\!+\!\norm{u\cdot \nabla u}_{L_{q,1}(\R_+;L_{p})}\\
  \leq& \norm{u_{t}}_{L_{q,1}(\R_+;L_{p})}\!+\!\norm{u}_{L_{\infty}(\R_+;L_{d})}\norm{\nabla^2 u}_{L_{q,1}(\R_+;L_{p})}.
  \end{aligned}$$
  Bounding the right-hand side according to \eqref{eq:emdld}, \eqref{eq:esxd2} and  \eqref{eq:esxd3}
  yields \eqref{eq:dotu2}. 
  \smallbreak
  Finally, as a consequence of Gagliardo-Nirenberg inequality and embedding \eqref{eq:emdld},  we have:
  \begin{equation*}
  \norm{z}_{L_{\infty}}\lesssim \norm{z}^{1-q/2}_{L_{d}}\norm{\nabla ^{2}z}^{q/2}_{L_{p}}\lesssim \norm{z}^{1-q/2}_{\B^{-1+d/p}_{p,1}}\norm{\nabla ^{2}z}^{q/2}_{L_{p}}.
  \end{equation*}
Hence,   using Inequality \eqref{eq:esxd2} and  \eqref{eq:esxd3},  we find that 
\begin{equation*}
    \begin{aligned}
    \int_{0}^{\infty}\norm{u}^{2}_{L_{\infty}}\,dt &\leq C\int_{0}^{\infty}\norm{u}^{2-q}_{\B^{-1+d/p}_{p,1}}\norm{\nabla^{2}u}^{q}_{L_{p}}\,dt\\
    &\leq C\norm{u}^{2-q}_{L_{\infty}(\R_+;\B^{-1+d/p}_{p,1})}\norm{\nabla^{2}u}^{q}_{L_{q}(\R_+;L_{p})} \\
    &\leq CX_0^2.    \end{aligned}\end{equation*}

Note that by arguing similarly, 
one  can easily get  
$$\|\nabla \rho\|_{L_2(\R_+;L_\infty)}, \|v\|_{L_2(\R_+;L_\infty)}\leq C X_0,$$
this completes the proof of the proposition.\end{proof}

\begin{remark}
As the vector field $u$ is divergence-free, the density satisfies \eqref{eq:rlin}.

\end{remark}
\begin{remark}
    We find that the method to obtain the global existence and uniqueness of $(INS)$ with large initial data (2D case) in \cite{DW} is not suitable for system $(INSK).$ On the one hand, because of the capillary term:
    \begin{align*}
    \|\nabla \rho\cdot \nabla^2 \rho\|_{L_{q,1}(\R_+;L_{p})}&\leq \|\nabla \rho\|_{L_{q_1,1}(\R_+;L_{p_1})}\|\nabla^2 \rho\|_{L_{2}(\R_+\times\R^2)} ;\\
     \|\Delta \rho\cdot \nabla \rho\|_{L_{q,1}(\R_+;L_{p})}&\leq \|\nabla \rho\|_{L_{q_1,1}(\R_+;L_{p_1})}\|\nabla^2 \rho\|_{L_{2}(\R_+\times\R^2)} ;\\
      \||\nabla \rho|^2 \nabla \rho\|_{L_{q,1}(\R_+;L_{p})}&\leq \|\nabla \rho\|_{L_{q_1,1}(\R_+;L_{p_1})}\|\nabla \rho\|^2_{L_{4}(\R_+\times\R^2)}\\&\leq \|\nabla \rho\|_{L_{q_1,1}(\R_+;L_{p_1})}\|\nabla \rho\|_{L_{\infty}(\R_+;L_2)}\|\nabla^2 \rho\|_{L_{2}(\R_+\times\R^2)}.
\end{align*}
Even though we get the estimates of  $\|\nabla^2 \rho\|_{L_{2}(\R_+\times\R^2)}$ by Proposition \ref{prop:r2}, the smallness condition is still required (possibly, move the smallness condition of $\|u_0\|_{\B^{-1+2/p}_{p,1}(\R^2)},$ see Proposition \ref{prop0ad2} ). On the other hand, the way to deal with the last term in \eqref{eq:divmr} and the source term of the heat equation of $\nabla \rho$ in \eqref{sys:ruv} are also show that the smallness condition is necessary.
\end{remark}

\begin{proposition}\label{prop1}
Under the  assumptions of Proposition  \ref{prop0d2}, we have
\begin{multline}
\label{eq:twesu}
    \norm{tu, tv/2, t(\nabla \rho)}_{L_{\infty}(\R_+;\B^{2-2/s}_{m,1})}+\norm{(tv/2)_{t},  \nabla^{2}(tv/2),  \nabla (tP)/2}_{L_{s,1}(\R_+;L_{m})}
\\
+\norm{(tu)_{t}, t\dot u, \nabla^{2}(tu),(t\nabla \rho)_{t}, \nabla^{2}(t\nabla \rho)}_{L_{s,1}(\R_+;L_{m})}
\leq  C \norm{u_{0},\nabla \rho_0}_{\B^{-1+d/p}_{p,1}}.
\end{multline}
\end{proposition}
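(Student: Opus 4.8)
The plan is to obtain \eqref{eq:twesu} by the same time-weighting device used in \cite{DW} for $(INS)$: I multiply each of the three evolution equations in \eqref{sys:ruv} by the factor $t$ and exploit that, for any of the three second-order operators $\mathcal L$ (heat for $\nabla\rho$, Stokes for $u$, Lam\'e for $v$) with the corresponding linearized equation $\d_t w-\mathcal L w+\nabla\Pi=F$, the weighted unknown $tw$ solves
\[
\d_t(tw)-\mathcal L(tw)+\nabla(t\Pi)=w+tF,\qquad (tw)|_{t=0}=0,
\]
since $t\d_t w=(tw)_t-w$ and $\mathcal L$ has constant coefficients. Because the initial data now vanish, applying Proposition 2.1 of \cite{DM2} to $t\nabla\rho$, Proposition \ref{propregularity} to $tu$, and Proposition \ref{pro:mrforlame} to $tv$ (again multiplying the $v$-equation by $1/2$ to absorb its pressure exactly as in Proposition \ref{prop0d2}), now in the higher Lorentz framework with indices $(s,m)$, bounds the entire left-hand side of \eqref{eq:twesu} by the $L_{s,1}(\R_+;L_m)$ norm of the shifted sources $w+tF$.

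The linear contributions $\norm{u,\nabla\rho,v/2}_{L_{s,1}(\R_+;L_m)}$ are already controlled by \eqref{eq:u}, so the real work is to estimate $\norm{tF}_{L_{s,1}(\R_+;L_m)}$ for each source. I first rewrite the terms $-(\rho-1)t\d_t w$ as $-(\rho-1)(tw)_t+(\rho-1)w$: the first piece is absorbed into the left-hand side using the smallness \eqref{eq:smallrho1}, and the second is a small multiple of $w$, already bounded by \eqref{eq:u}. For each genuinely nonlinear term I place the weight $t$ on the factor carrying the most derivatives, so that it is measured by one of the time-weighted maximal-regularity quantities $\nabla^2(tu),\ \nabla^2(tv),\ \nabla^2(t\nabla\rho)$, while the remaining factor is kept in a time-uniform norm $L_\infty(\R_+;\B^{-1+d/p}_{p,1})\hookrightarrow L_\infty(\R_+;L_d)$ furnished by \eqref{eq:u}. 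The embeddings \eqref{eq:embld} and \eqref{eq:esimpot} are invoked exactly as in Proposition \ref{prop0d2}; for instance the convective terms are handled through $t\,u\cdot\nabla u=u\cdot\nabla(tu)$ and bounded by $\norm{u}_{L_\infty(\R_+;L_d)}\norm{\nabla^2(tu)}_{L_{s,1}(\R_+;L_m)}$, and similarly for $v$.

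The delicate point, and the main obstacle, is the weighted capillary term $t\,\div(k(\rho)\nabla\rho\otimes\nabla\rho)$, the third-order density contributions coming from the last term of \eqref{eq:divmr}, and the source $t\,\nabla\div(\rho v)$ of the heat equation for $\nabla\rho$. Using the algebraic decompositions \eqref{eq:krho}, \eqref{eq:divmr} and \eqref{eq:ndrv} together with the Hölder/embedding splittings already performed in \eqref{eq:capes}--\eqref{eq:esrest}, but now with the weight $t$ attached to the $\nabla\rho$-factor bearing two extra derivatives (producing $\nabla^2(t\nabla\rho)$) and the remaining $\nabla\rho$ factors taken in the time-uniform norm $L_\infty(\R_+;L_d)$ (and $\rho$ controlled through \eqref{eq:esimpot}), these reduce to products of one weighted higher-order norm and lower-order quantities already bounded by \eqref{eq:u}. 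I expect the bookkeeping of \emph{which} factor receives the weight to be the only subtle issue, since the non-weighted factor must always land in a norm that Proposition \ref{prop0d2} already controls uniformly in time.

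Finally, I collect all the bounds: every term carrying $\norm{\rho-1}_{L_\infty(\R_+\times\R^d)}$ is absorbed by the left-hand side via \eqref{eq:smallrho1}, and every remaining source is at least quadratic in the quantities controlled by \eqref{eq:u}, so the already-established smallness $X\lesssim\norm{u_0,\nabla\rho_0}_{\B^{-1+d/p}_{p,1}}$ from \eqref{eq:esxd2}--\eqref{eq:esxd3} lets me close the estimate and express the right-hand side in the critical norm $\norm{u_0,\nabla\rho_0}_{\B^{-1+d/p}_{p,1}}$. The term $t\dot u$ in \eqref{eq:twesu} is then recovered from $t\dot u=(tu)_t-u+t\,u\cdot\nabla u$, each piece of which has just been estimated, which completes the argument.
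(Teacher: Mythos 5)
Your overall architecture matches the paper's: multiply the $\nabla\rho$, $u$ and $v$ equations by $t$, use $t\d_t w=\d_t(tw)-w$ so that the weighted unknowns solve the same linear systems with zero initial data and shifted sources, apply the three maximal regularity propositions at the level $(s,m)$ (with the factor $1/2$ on the $v$-equation), absorb the $(\rho-1)\d_t(tw)$ contributions by \eqref{eq:smallrho1}, and bound the shift terms by \eqref{eq:u}. The gap sits exactly where you flag ``the only subtle issue'': your rule for distributing the weight in the quadratic terms is the wrong way round, and the H\"older/Sobolev pairing you propose is not available for the exponents of the theorem. Under the constraint $\frac{d}{2m}+\frac1s=\frac12$ of \eqref{eq:defsm} one always has $m>d$ (since $\frac{d}{2m}=\frac12-\frac1s<\frac12$), so the chain $\norm{u\cdot\nabla(tu)}_{L_m}\le\norm{u}_{L_d}\norm{\nabla(tu)}_{L_{m^*}}$ with $\frac1{m^*}=\frac1m-\frac1d$, followed by $\W^1_m\hookrightarrow L_{m^*}$, is meaningless: $\frac1m-\frac1d<0$ and that Sobolev embedding requires $m<d$. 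The same objection applies verbatim to your treatment of the capillary terms and of $t\nabla\div(\rho v)$, where you again pair the unweighted factor in $L_\infty(\R_+;L_d)$ against $\nabla^2(tw)\in L_{s,1}(\R_+;L_m)$; none of these products can land in $L_m$ in space this way. (This is in contrast with Proposition \ref{prop0d2}, where the target space is $L_p$ with $p<d$ and the pairing $L_d\times L_{p^*}$ of \eqref{eq:embld} is legitimate.)

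The paper's bookkeeping is the opposite of yours: the time weight goes on the factor that is then measured in the \emph{time-uniform, space-sup} norm, via $\norm{t\nabla w}_{L_\infty(\R_+\times\R^d)}\lesssim\norm{tw}_{L_\infty(\R_+;\B^{2-2/s}_{m,1})}$ (using $2-2/s=1+d/m$ and the embedding \eqref{eq:embed}), while the \emph{unweighted, lower-order} factor is placed in $L_{s,1}(\R_+;L_m)$, which is precisely what \eqref{eq:u} already controls. Thus
\begin{equation*}
\norm{t\rho u\cdot\nabla u}_{L_{s,1}(\R_+;L_m)}\lesssim\norm{u}_{L_{s,1}(\R_+;L_m)}\norm{tu}_{L_\infty(\R_+;\B^{2-2/s}_{m,1})},\quad
\norm{t\nabla\rho\cdot\nabla^2\rho}_{L_{s,1}(\R_+;L_m)}\lesssim\norm{\nabla\rho}_{L_{s,1}(\R_+;L_m)}\norm{t\nabla\rho}_{L_\infty(\R_+;\B^{2-2/s}_{m,1})},
\end{equation*}
and similarly for the remaining products; the cubic capillary term additionally uses \eqref{eq:esimpot}, and the third-order term $\rho\Delta\nabla\rho$ from \eqref{eq:divmr} is handled by $\norm{\rho}_{L_\infty}\lesssim\norm{\nabla\rho}_{L_\infty(\R_+;\B^{-1+d/p}_{p,1})}$ so that it can be absorbed by smallness. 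With this swap, the rest of your argument (absorption of the $\rho-1$ terms, closing via the smallness of $X$ from Proposition \ref{prop0d2}, and recovering $t\dot u=(tu)_t-u+tu\cdot\nabla u$) goes through as in the paper.
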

\begin{proof}
Now, multiplying both sides of \eqref{sys:ruv} by time $t$ except the mass equation yields
\begin{equation*}
\left\{\begin{aligned}
&\d_t(t\nabla \rho)-\nu\Delta t\nabla \rho=-t\nabla\div(\rho v)+\nabla \rho,
\\
 &  \d_t (tu)-\Delta tu+t\nabla P=-(\rho-1)\d_t (tu)+\rho u-\rho u\cdot \nabla u+g, \\
 &  \d_t (tv)-\Delta( tv)-\nabla\div(tv)=-(\rho-1)\d_t (tv)-t\rho v\cdot \nabla v+t\phi+\rho v \\
 &\hspace{3cm}-t\nabla P+t\div((\mu(\rho)-\mu(1))D(v))\\
  &  \div(tu)=0.
\end{aligned}\right.
\end{equation*}
Similar to \eqref{sys:ruv}, we apply Proposition 2.1 in  \cite{DM2} to the equation of $t\nabla \rho,$ Proposition \ref{propregularity} to the equation of $tu,$ Proposition \ref{pro:mrforlame} to the equation of $tv$ and multiplying the equation of $tv$ with $1/2$ respectively, to get
 \begin{multline}\label{esb2dtw}
     \norm{tu, tv/2, t(\nabla \rho)}_{L_{\infty}(\R_+;\B^{2-2/s}_{m,1})}+\norm{(tu)_{t}, \nabla^{2}(tu), \nabla (tP)/2}_{L_{s,1}(\R_+;L_{m})}\\
     +\norm{(tv/2)_{t}, \nabla^{2}(tv/2), (t\nabla \rho)_{t}, \nabla^{2}(t\nabla \rho)}_{L_{s,1}(\R_+;L_{m})}
\lesssim \norm{(\rho-1)(tu)_{t}+t\rho u\cdot \nabla u}_{L_{s,1}(\R_+;L_{m})}\\ +\norm{(\rho-1)(tv)_{t}+t\rho v\cdot \nabla v}_{L_{s,1}(\R_+;L_{m})}
+\norm{t\phi}_{L_{s,1}(\R_+;L_{m})}+\norm{tg}_{L_{q,1}(\R_+;L_{p})}\\+\norm{t\nabla(\div (\rho v))}_{L_{s,1}(\R_+;L_{m})}+\norm{t\div((\mu(\rho)-\mu(1))D(v))}_{L_{q,1}(\R_+;L_{p})}\\+\|\nabla \rho\|_{L_{s,1}(\R_+;L_{m})}
+\|\rho u\|_{L_{s,1}(\R_+;L_{m})}+\|\rho v\|_{L_{s,1}(\R_+;L_{m})}\cdotp
 \end{multline}
We first bound the last three terms on the right-hand side by \eqref{eq:u}. Then, as $1+d/m=2-2/s,$ the H\"older inequality and
the following embedding:
\begin{equation}\label{eq:embed}
\dot B^{d/m}_{m,1}(\R^d)\hookrightarrow L_\infty(\R^d)
\end{equation} yields
\begin{align*}
    &\norm{t\rho u\cdot \nabla u}_{L_{s,1}(\R_+;L_{m})}+\norm{t\rho v\cdot \nabla v}_{L_{s,1}(\R_+;L_{m})}\\
    \lesssim & \norm{t \nabla u}_{L_{\infty}(\R_+\times \R^{d})} \norm{u}_{L_{s,1}(\R_+;L_{m})}+\norm{t \nabla v}_{L_{\infty}(\R_+\times \R^{d})} \norm{v}_{L_{s,1}(\R_+;L_{m})}\\
    \lesssim & \norm{t u}_{L_{\infty}(\R_+;\B^{2-2/s}_{m,1})} \norm{u}_{L_{s,1}(\R_+;L_{m})}+\norm{t v}_{L_{\infty}(\R_+;\B^{2-2/s}_{m,1})} \norm{v}_{L_{s,1}(\R_+;L_{m})}\cdotp
\end{align*}
Hence,
\begin{equation*}
    \begin{aligned}
    &\norm{(\rho-1)(tu)_{t}+t\rho u\cdot \nabla u}_{L_{s,1}(\R_+;L_{m})}+\norm{(\rho-1)(tv)_{t}+t\rho v\cdot \nabla v}_{L_{s,1}(\R_+;L_{m})}\\
    \lesssim &\|\rho-1\|_{L_\infty(\R_+\times \R^d)}(\norm{(tu)_{t}}_{L_{s,1}(\R_+;L_{m})}+\norm{(tv)_{t}}_{L_{s,1}(\R_+;L_{m})})\\
    &+
    \norm{t u}_{L_{\infty}(\R_+;\B^{2-2/s}_{m,1})} \norm{u}_{L_{s,1}(\R_+;L_{m})}+\norm{t v}_{L_{\infty}(\R_+;\B^{2-2/s}_{m,1})} \norm{v}_{L_{s,1}(\R_+;L_{m})}\cdotp
    \end{aligned}
\end{equation*}

Thanks to \eqref{eq:embld}, \eqref{eq:esimpot}, \eqref{eq:embed},  and the following interpolation inequality
$$\|\nabla z\|^2_{L_\infty(\R_+\times\R^d)}\leq \| z\|_{L_\infty(\R_+\times\R^d)}\|\nabla^2 z\|_{L_\infty(\R_+\times\R^d)},$$
we can get
\begin{equation}
    \label{eq:esusf}
    \begin{aligned}
    \|t\nabla \rho\cdot D(v)\|_{L_{s,1}(\R_+;L_{m})}&\leq \|\nabla \rho\|_{L_{s,1}(\R_+;L_{m})} \|tD(v)\|_{L_\infty(\R_+\times\R^d)}\\
    &\lesssim \|\nabla \rho\|_{L_{s,1}(\R_+;L_{m})} \|tv\|_{L_{\infty}(\R_+;\B^{2-2/s}_{m,1})},\\
      \|t\nabla \rho\cdot Dv, \ t\nabla \rho\cdot \nabla v\|_{L_{s,1}(\R_+;L_{m})}
      &\leq \|\nabla \rho\|_{L_{s,1}(\R_+;L_{m})} \|t\nabla v\|_{L_\infty(\R_+\times\R^d)}\\
    &\lesssim \|\nabla \rho\|_{L_{s,1}(\R_+;L_{m})} \|tv\|_{L_{\infty}(\R_+;\B^{2-2/s}_{m,1})},\\
    \|t\nabla \rho\cdot \nabla^2 \rho, \  t\nabla \rho\cdot \Delta \rho\|_{L_{s,1}(\R_+;L_{m})}
    &\leq \|\nabla \rho\|_{L_{s,1}(\R_+;L_{m})} \|t\nabla^2 \rho\|_{L_\infty(\R_+\times\R^d)}\\
    &\lesssim \|\nabla \rho\|_{L_{s,1}(\R_+;L_{m})} \|t\nabla \rho\|_{L_{\infty}(\R_+;\B^{2-2/s}_{m,1})},\\
    \|t|\nabla \rho|^2 \nabla \rho\|_{L_{s,1}(\R_+;L_{m})}&\leq \|\nabla \rho\|_{L_{s,1}(\R_+;L_{m})} t\||\nabla \rho|^2 \|_{L_\infty(\R_+\times\R^d)}\\
    &\lesssim \|\nabla \rho\|_{L_{s,1}(\R_+;L_{m})} \| \rho \|_{L_\infty(\R_+\times\R^d)}\|t\nabla^2 \rho \|_{L_\infty(\R_+\times\R^d)}\\
    &\lesssim \|\nabla \rho\|_{L_{s,1}(\R_+;L_{m})} \|t\nabla \rho \|_{L_{\infty}(\R_+;\B^{2-2/s}_{m,1})}\|\nabla \rho \|_{L_{\infty}(\R_+;\B^{-1+d/p}_{p,1})}.
\end{aligned}
\end{equation}
From which it follows, using \eqref{eq:mr} and \eqref{eq:krho} and the fact $\nabla \div=\q \Delta$ that
\begin{equation}
    \label{eq:estf}
    \begin{aligned}
      \|t\div((\mu(\rho)-\mu(1))D(v))\|_{L_{s,1}(\R_+;L_{m})}\lesssim& \|\nabla \rho\|_{L_{s,1}(\R_+;L_{m})} \|tv\|_{L_{\infty}(\R_+;\B^{2-2/s}_{m,1})}\\
    &+\|t\nabla^2 v\|_{L_{s,1}(\R_+;L_{m})}\|\rho-1\|_{L_{\infty}(\R_+\times \R^d)},\\
    \|t\div((\mu(\rho)-\mu(1))D(u))\|_{L_{s,1}(\R_+;L_{m})}\lesssim &
    \|\nabla \rho\|_{L_{s,1}(\R_+;L_{m})} \|tu\|_{L_{\infty}(\R_+;\B^{2-2/s}_{m,1})}\\
    &+\|t\nabla^2 u\|_{L_{s,1}(\R_+;L_{m})}\|\rho-1\|_{L_{\infty}(\R_+\times \R^d)},\\
    \|t\div(k(\rho)\nabla \rho \otimes \nabla \rho)\|_{L_{s,1}(\R_+;L_{m})} \lesssim 
    &\|\nabla \rho\|_{L_{s,1}(\R_+;L_{m})} \|t\nabla \rho \|_{L_{\infty}(\R_+;\B^{2-2/s}_{m,1})}\\
    &\cdot(1+\|\nabla \rho \|_{L_{\infty}(\R_+;\B^{-1+d/p}_{p,1})}).
\end{aligned}
\end{equation}

Hence, it turns out that
\begin{multline*}
     \|tg\|_{L_{s,1}(\R_+;L_{m})}\lesssim  \|\nabla \rho\|_{L_{s,1}(\R_+;L_{m})} \|tu\|_{L_{\infty}(\R_+;\B^{2-2/s}_{m,1})}+\|t\nabla^2 u\|_{L_{s,1}(\R_+;L_{m})}\|\rho-1\|_{L_{\infty}(\R_+\times \R^d)}\\
     +\|\nabla \rho\|_{L_{s,1}(\R_+;L_{m})} \|t\nabla \rho \|_{L_{\infty}(\R_+;\B^{2-2/s}_{m,1})}(1+\|\nabla \rho \|_{L_{\infty}(\R_+;\B^{-1+d/p}_{p,1})}).
\end{multline*}
Combing \eqref{eq:esimpot}, \eqref{eq:esusf}, \eqref{eq:estf} with the definition of $\phi$ in \eqref{eq:deff},  we can obtain
\begin{equation*}
    \begin{aligned}
    \|t\phi\|_{L_{s,1}(\R_+;L_{m})}\lesssim& \|\nabla \rho\|_{L_{s,1}(\R_+;L_{m})} \|t\nabla \rho \|_{L_{\infty}(\R_+;\B^{2-2/s}_{m,1})}(1+\|\nabla \rho \|_{L_{\infty}(\R_+;\B^{-1+d/p}_{p,1})})\\&+\|\nabla \rho\|_{L_{s,1}(\R_+;L_{m})} \|tv\|_{L_{\infty}(\R_+;\B^{2-2/s}_{m,1})}\\&+\|\rho\|_{L_\infty(\R_+\times \R^d)}\|t\nabla^2(\nabla \rho)\|_{L_{q,1}(\R_+;L_{p})}\\
    \lesssim& \|\nabla \rho\|_{L_{s,1}(\R_+;L_{m})} \|t\nabla \rho \|_{L_{\infty}(\R_+;\B^{2-2/s}_{m,1})}(1+\|\nabla \rho \|_{L_{\infty}(\R_+;\B^{-1+d/p}_{p,1})})\\&+\|\nabla \rho\|_{L_{s,1}(\R_+;L_{m})} \|tv\|_{L_{\infty}(\R_+;\B^{2-2/s}_{m,1})}\\&+\norm{\nabla \rho}_{L_{\infty}(\R_+;\B^{-1+d/p}_{p,1})}\|t\nabla^2(\nabla \rho)\|_{L_{q,1}(\R_+;L_{p})}.
\end{aligned}
\end{equation*}

Note that, to achieve \eqref{eq:twesu}, it is suffices to add the estimates of $ \norm{t\nabla(\div (\rho v))}_{L_{s,1}(\R_+;L_{m})}.$  One can conclude that the following estimates by using \eqref{eq:esimpot} and \eqref{eq:embed}:
\begin{align*}
 \|t\nabla \rho\cdot \div v\|_{L_{s,1}(\R_+;L_{m})}&\lesssim \|\nabla \rho\|_{L_{s,1}(\R_+;L_{m})}\|t\div v\|_{L_{\infty}(\R_+\times \R^d)}\\
 &\lesssim \|\nabla \rho\|_{L_{s,1}(\R_+;L_{m})}\|t v\|_{L_{\infty}(\R_+;\B^{2-2/s}_{m,1})},\\
    \norm{tv\cdot \nabla (\nabla \rho)}_{L_{s,1}(\R_+;L_{m})}&\lesssim \norm{v}_{L_{s,1}(\R_+;L_{m})}\norm{t\nabla^2 \rho}_{L_{\infty}(\R_+\times\R^d)},\\ 
    \norm{t\rho \nabla \div v}_{L_{s,1}(\R_+;L_{m})}&\lesssim  \norm{\rho}_{L_{\infty}(\R_+\times \R^d)}\norm{t\nabla^2 v}_{L_{s,1}(\R_+;L_{m})}\\
    &\lesssim  \norm{\nabla \rho}_{L_{\infty}(\R_+;\B^{-1+d/p}_{p,1})}\norm{t\nabla^2 v}_{L_{s,1}(\R_+;L_{m})},
\end{align*}
which along with  \eqref{eq:ndrv} and \eqref{eq:esusf} infer that
\begin{multline*}
    \norm{t\nabla(\div (\rho v))}_{L_{s,1}(\R_+;L_{m})}\lesssim \norm{v}_{L_{s,1}(\R_+;L_{m})}\norm{t\nabla^2 \rho}_{L_{\infty}(\R_+\times\R^d)}\\
    +\norm{\nabla \rho}_{L_{\infty}(\R_+;\B^{-1+d/p}_{p,1})}\norm{t\nabla^2 v}_{L_{s,1}(\R_+;L_{m})}
    +\|\nabla \rho\|_{L_{s,1}(\R_+;L_{m})} \|tv\|_{L_{\infty}(\R_+;\B^{2-2/s}_{m,1})}.
\end{multline*}

 In the end, we use \eqref{eq:smallrho1} to absorb all the term with $\|\rho-1\|_{L_\infty(\R_+\times \R^d)}$ by the left hand side of \eqref{esb2dtw}, while it follows from  \eqref{eq:u}, \eqref{eq:ini2d}, \eqref{eq:ini3d} and Proposition \ref{prop0d2} that
\begin{multline}\label{eq:uu}
 \norm{tu, tv/2, t(\nabla \rho)}_{L_{\infty}(\R_+;\B^{2-2/s}_{m,1})}+\norm{(tv/2)_{t},  \nabla^{2}(tv/2),  \nabla (tP)/2}_{L_{s,1}(\R_+;L_{m})}
\\
+\norm{(tu)_{t}, \nabla^{2}(tu),(t\nabla \rho)_{t}, \nabla^{2}(t\nabla \rho)}_{L_{s,1}(\R_+;L_{m})}
\leq  C \norm{u_{0},\nabla \rho_0}_{\B^{-1+d/p}_{p,1}}\cdotp\end{multline}
To bound $t\dot u,$ we just 
have to observe that $t\dot u= (tu)_t - u + tu\cdot\nabla u.$ Hence,  H\"older inequality and \eqref{eq:embed} yields
\begin{multline*}
    \norm{t\t{u}}_{L_{s,1}(\R_+;L_{m})}\leq  \norm{(tu)_{t}}_{L_{s,1}(\R_+;L_{m})}+\norm{u}_{L_{s,1}(\R_+;L_{m})}\\
+\norm{tu}_{L_\infty(\R_+; \dot B^{2-2/s}_{m,1})} \norm{u}_{L_{s,1}(\R_+;L_{m})}.
\end{multline*}
Thus, we obtain the desired result owing to inequalities \eqref{eq:u} and \eqref{eq:uu}.
\end{proof}
The results that we have in hand will enable us to bound $\nabla u,$ $\nabla^2 \rho$ are in $L_1(\R_+;L_\infty).$ 
\begin{corollary}\label{coro1d2}
under the same assumption of Proposition  \ref{prop1}, we have:
\begin{align}\label{eq:tnablau1}
\int_{0}^{\infty} \norm{\nabla u}_{L_{\infty}(\R^d)}\,dt&\leq  C \norm{u_{0},\nabla \rho_0}_{\B^{-1+d/p}_{p,1}(\R^d)}\\
\label{eq:tnablau2}
 \biggl(\int_{0}^\infty t\norm{\nabla u}^{2}_{L_{\infty}(\R^d)}\,dt\biggr)^{1/2}&\leq  C \norm{u_{0},\nabla \rho_0}_{\B^{-1+d/p}_{p,1}(\R^d)} \\
\label{estulilid2}\underset{t\in \R_+}
\sup ( t)^{1/2} \|u(t)\|_{L_\infty(\R^d)}&\leq C
 \norm{u_{0},\nabla \rho_0}_{\B^{-1+d/p}_{p,1}(\R^d)}.\end{align}
\end{corollary}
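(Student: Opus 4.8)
The plan is to derive all three bounds from the Gagliardo--Nirenberg inequality, fed by the two scales of estimates already obtained in Propositions \ref{prop0d2} and \ref{prop1}, and to convert time integrability into a H\"older inequality in Lorentz spaces. Throughout, write $M:=\norm{u_{0},\nabla \rho_0}_{\B^{-1+d/p}_{p,1}}$ and let $s'=s/(s-1)$ be the conjugate of $s$; recall from \eqref{eq:defsm} that $d/(2m)+1/s=1/2$, which forces $s>2$ and $1+d/m=2-2/s$.

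First I would record the Gagliardo--Nirenberg inequality
$$\norm{\nabla u}_{L_\infty}\lesssim \norm{u}_{L_m}^{1/s}\,\norm{\nabla^2 u}_{L_m}^{1-1/s},$$
whose exponent $\theta=1-1/s$ is fixed by the scaling relation $-1/d=1/m-2\theta/d$ and lies in $(1/2,1)$ since $s>2$. As $t$ is a scalar, $\nabla^2(tu)=t\,\nabla^2 u$, so $\norm{\nabla^2 u}_{L_m}=t^{-1}\norm{\nabla^2(tu)}_{L_m}$ and hence, with $a(t):=\norm{u(t)}_{L_m}$ and $b(t):=\norm{\nabla^2(tu)(t)}_{L_m}$,
$$\norm{\nabla u(t)}_{L_\infty}\lesssim t^{-1/s'}\,a(t)^{1/s}\,b(t)^{1-1/s}.$$
By Proposition \ref{prop0d2} one has $a\in L_{s,1}(\R_+)$ and by Proposition \ref{prop1} one has $b\in L_{s,1}(\R_+)$, both with norm $\lesssim M$. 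Here lies the crux: the weight $t^{-1/s'}$ is \emph{not} integrable, so I would invoke the generalized H\"older inequality in Lorentz spaces. One checks that $t^{-1/s'}\in L_{s',\infty}(\R_+)$ with finite quasi-norm, that $\norm{a^{1/s}}_{L_{s^2,s}}=\norm{a}_{L_{s,1}}^{1/s}$ and $\norm{b^{1-1/s}}_{L_{ss',s'}}=\norm{b}_{L_{s,1}}^{1/s'}$, and that the exponents satisfy $\tfrac1{s'}+\tfrac1{s^2}+\tfrac1{ss'}=1$ and $0+\tfrac1s+\tfrac1{s'}=1$; the product therefore lies in $L_{1,1}(\R_+)=L_1(\R_+)$ and
$$\int_0^\infty\norm{\nabla u}_{L_\infty}\,dt\lesssim \norm{t^{-1/s'}}_{L_{s',\infty}}\,\norm{a}_{L_{s,1}}^{1/s}\,\norm{b}_{L_{s,1}}^{1/s'}\lesssim M^{1/s}M^{1/s'}=M,$$
which is \eqref{eq:tnablau1}. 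This Lorentz-exponent bookkeeping is the main obstacle; the remaining two bounds are then quick consequences.

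For \eqref{eq:tnablau2} I would first extract a pointwise Lipschitz bound from the time-weighted estimate: since $tu\in L_\infty(\R_+;\B^{2-2/s}_{m,1})$ and $2-2/s=1+d/m$, applying the embedding \eqref{eq:embed}, namely $\B^{d/m}_{m,1}\hookrightarrow L_\infty$, to $\nabla(tu)$ yields $\norm{t\nabla u}_{L_\infty(\R_+\times\R^d)}\lesssim M$. Then, splitting $t\norm{\nabla u}_{L_\infty}^2=(t\norm{\nabla u}_{L_\infty})\,\norm{\nabla u}_{L_\infty}$ and pulling the first factor out in $L_\infty$,
$$\int_0^\infty t\,\norm{\nabla u}_{L_\infty}^2\,dt\le \norm{t\nabla u}_{L_\infty(\R_+\times\R^d)}\int_0^\infty\norm{\nabla u}_{L_\infty}\,dt\lesssim M\cdot M,$$
so \eqref{eq:tnablau2} follows from \eqref{eq:tnablau1}.

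Finally, for \eqref{estulilid2} I would interpolate the two pointwise-in-time Besov bounds already available: Proposition \ref{prop0d2} gives $\norm{u(t)}_{\B^{-1+d/p}_{p,1}}\lesssim M$ and Proposition \ref{prop1} gives $\norm{u(t)}_{\B^{1+d/m}_{m,1}}\lesssim M/t$ for every $t>0$. Taking $1/r=\tfrac1{2p}+\tfrac1{2m}\in(1/m,1/p)$, a direct Littlewood--Paley decomposition together with H\"older on each dyadic block and Cauchy--Schwarz in $\ell_1$ gives
$$\norm{u(t)}_{L_\infty}\lesssim \norm{u(t)}_{\B^{d/r}_{r,1}}\lesssim \norm{u(t)}_{\B^{-1+d/p}_{p,1}}^{1/2}\,\norm{u(t)}_{\B^{1+d/m}_{m,1}}^{1/2}\lesssim M^{1/2}(M/t)^{1/2},$$
where the regularity index $d/r$ is exactly the critical one for $\B^{d/r}_{r,1}\hookrightarrow L_\infty$. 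Multiplying by $t^{1/2}$ yields $\sup_{t>0}t^{1/2}\norm{u(t)}_{L_\infty}\lesssim M$, which is \eqref{estulilid2}.
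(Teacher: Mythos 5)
Your proof is correct, and while the overall strategy (Gagliardo--Nirenberg plus H\"older in Lorentz spaces, fed by Propositions \ref{prop0d2} and \ref{prop1}) is the same as the paper's, the key step \eqref{eq:tnablau1} is carried out along a genuinely different interpolation. The paper interpolates $\nabla u$ in $L_\infty$ between $\norm{\nabla^2 u}_{L_p}\in L_{q,1}(\R_+)$ and $\norm{t\nabla^2 u}_{L_m}\in L_{s,1}(\R_+)$, i.e.\ across the two spatial integrability levels $p$ and $m$; the exponents then depend on $d$, and the paper treats $d=2$ and $d=3$ separately with different Gagliardo--Nirenberg inequalities and different Lorentz-H\"older bookkeeping. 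You instead interpolate between $\norm{u}_{L_m}\in L_{s,1}(\R_+)$ and $\norm{t\nabla^2 u}_{L_m}\in L_{s,1}(\R_+)$, staying at the single integrability level $m$; the exponent $\theta=1-1/s$ and the weight $t^{-1/s'}\in L_{s',\infty}$ are then dimension-independent, so one computation covers both $d=2$ and $d=3$ --- a mild but real simplification, at the cost of using the $L_{s,1}(L_m)$ bound on $u$ from \eqref{eq:u} rather than only second-derivative information. Your Lorentz exponent check ($1/s'+1/s^2+1/ss'=1$, second indices $\infty,s,s'$, and $\norm{f^\alpha}_{L_{p,r}}=\norm{f}^\alpha_{L_{p\alpha,r\alpha}}$) is exactly what Proposition \ref{p:lorentz} provides. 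For \eqref{eq:tnablau2} your argument is the paper's, merely with the factor $t\norm{\nabla u}_{L_\infty}$ pulled out as a sup before integrating. For \eqref{estulilid2} the paper uses the Gagliardo--Nirenberg inequality $\norm{z}_{L_\infty}^2\lesssim\norm{z}_{L_d}\norm{\nabla z}_{L_\infty}$ together with $\B^{-1+d/p}_{p,1}\hookrightarrow L_d$, whereas you interpolate $\B^{d/r}_{r,1}$ between $\B^{-1+d/p}_{p,1}$ and $\B^{1+d/m}_{m,1}$ across different integrability indices; this is not covered by Proposition \ref{interpolation} as stated, but your block-by-block H\"older plus Cauchy--Schwarz justification is valid, and both routes consume exactly the same two inputs ($\norm{u(t)}_{\B^{-1+d/p}_{p,1}}\lesssim M$ and $\norm{tu(t)}_{\B^{1+d/m}_{m,1}}\lesssim M$).
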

\begin{proof} 
In the 2D case, according to the following Gagliardo-Nirenberg inequality
$$\norm{z}_{L_{\infty}(\R^2)}\lesssim \norm{ \nabla z}^{1-2/m}_{L_{p}(\R^2)}\norm{\nabla z}^{2/m}_{L_{m}(\R^2)},$$
and H\"older estimates in Lorentz spaces
(see Proposition \ref{p:lorentz}), we arrive at
\begin{equation*}
\begin{aligned}
\int^{\infty}_{0}\norm{\nabla u}_{L_{\infty}(\R^2)}\,dt&\lesssim \int^{\infty}_{0} t^{-2/m} \norm{ \nabla^{2} u}^{1-2/m}_{L_{p}(\R^2)}\norm{t\nabla^{2} u}^{2/m}_{L_{m}(\R^2)}\,dt\\
&\lesssim\norm{t^{-2/m}}_{L_{m/2,\infty}(\R_+)}\norm{\nabla^{2} u}^{1-2/m}_{L_{q,1}(\R_+;L_{p}(\R^2))}\norm{t \nabla^{2} u}^{2/m}_{L_{s,1}(\R_+;L_{m}(\R^2))}.\end{aligned}
\end{equation*}
Then, by using  $t\mapsto t^{-2/m}\in L_{m/2,\infty}(\R_+)$
and  the other terms of
the  right-hand side may be bounded by means of Propositions
\ref{prop0d2} and \ref{prop1}, we get \eqref{eq:tnablau1} in 2D case.

In the three-dimensional case, the proofs are different. As $(s,m)$ defined in \eqref{eq:defsm} implies that $p<3<m,$ we bound  $\nabla u$ take advantage of the following interpolation inequality:
$$\norm{u}_{L_{\infty}(\R^{3})}\leq \norm{\nabla u}^{\frac{p(m-3)}{3(m-p)}}_{L_{p}(\R^{3})}\norm{\nabla u}^{\frac{m(3-p)}{3(m-p)}}_{L_{m}(\R^{3})}.$$
Hence, applying the following exponents to the  H\"older inequality in Lorentz spaces:
\begin{small}$$(p_1,r_1)\!=\!\biggl(\frac{3(m\!-\!p)}{m(3-p)},\infty\!\biggr),\!\!\quad
(p_2,r_2)\!=\!\biggl(\frac{3q(m\!-\!p)}{p(m-3)},\frac{p_2}q\!\biggr),\!\!\quad
(p_3,r_3)\!=\!\biggl(\frac{3s(m\!-\!p)}{m(3-p)},\frac{p_3}s\!\biggr)$$\end{small}
and  thanks to the fact that   $t^{-\alpha}$ with $\alpha=m(3-p)/(3(m-p))$  
 is in $L_{1/\alpha,\infty}(\R_+),$ 
 \eqref{eq:u} and \eqref{eq:twesu},
one writes
\begin{equation*}
    \begin{aligned}
    \int_{0}^{\infty}\norm{\nabla u}_{L_{\infty}(\R^3)}\,dt
    &\leq \int_{0}^{\infty} t^{-\frac{m(3-p)}{3(m-p)}} \norm{\nabla^{2} u}^{\frac{p(m-3)}{3(m-p)}}_{L_{p}(\R^3)}\norm{t\nabla^{2} u}^{\frac{m(3-p)}{3(m-p)}}_{L_{m}(\R^3)}\,dt\\
    &\leq C \norm{\nabla^{2} u}^{\frac{p(m-3)}{3(m-p)}}_{L_{q,1}(\R_+;L_{p}(\R^3))}\norm{t\nabla^{2} u}^{\frac{m(3-p)}{3(m-p)}}_{L_{s,1}(\R_+;L_{m}(\R^3))}\\
    &\leq C \|u_0, \nabla \rho_0\|_{\B^{-1+3/p}_{p,1}(\R^3)}.
    \end{aligned}
\end{equation*}
This finishes the proof of \eqref{eq:tnablau1}.

To get \eqref{eq:tnablau2}, by using \eqref{eq:embed}, we obatin
\begin{equation*}
\begin{aligned}
\int_{0}^{\infty}t\norm{\nabla u}^{2}_{L_{\infty}(\R^d)}\,dt& \leq \int_{0}^{\infty} t\norm{\nabla u}_{\B^{d/m}_{m,1}(\R^d)}\norm{\nabla u}_{L_{\infty}(\R^d)}\,dt\\
&\lesssim \int_{0}^{\infty} \norm{tu}_{\B^{2-2/s}_{m,1}(\R^d)}\norm{\nabla u}_{L_{\infty}(\R^d)}\,dt\\
&\lesssim\norm{tu}_{L_{\infty}(\R_+;\B^{2-2/s}_{m,1}(\R^d))} \norm{\nabla u}_{L_{1}(\R_+;L_{\infty}(\R^d))},
\end{aligned}
\end{equation*}
which along with the inequalities just proved before and Proposition \ref{prop1} ensures \eqref{eq:tnablau2}. 
\medbreak
Finally,  the following  Gagliardo-Nirenberg inequality
\begin{equation*}
    \|z\|^2_{L_\infty(\R^d)}\lesssim \|z\|_{L_d(\R^d)}\|\nabla z\|_{L_\infty(\R^d)} \leq \|z\|_{L_d(\R^d)}\| z\|_{\dot B^{1+2/m}_{m,1}(\R^d)},
\end{equation*}
implies for all $t\in\R_+,$ 
$$t^{1/2} \|u(t)\|_{L_\infty(\R^d)}\lesssim \|u(t)\|_{L_d(\R^d)}^{1/2}\|t u(t)\|_{\dot B^{1+d/m}_{m,1}(\R^d)}^{1/2}$$
whereas embedding \eqref{eq:emdld} and   Proposition \ref{prop0d2}  ensures that \eqref{estulilid2}.
\end{proof}
\smallbreak

By a similar argument, this naturally leads to the following corollary for $\nabla \rho.$ 
\begin{corollary}
Under the same assumption of  Proposition  \ref{prop1}, we have:
\begin{align*}
\int_{0}^{\infty} \norm{\nabla^2 \rho}_{L_{\infty}(\R^d)}\,dt&\leq  C \norm{u_{0},\nabla \rho_0}_{\B^{-1+d/p}_{p,1}(\R^d)}\\
 \biggl(\int_{0}^\infty t\norm{\nabla^2 \rho}^{2}_{L_{\infty}(\R^d)}\,dt\biggr)^{1/2}&\leq  C \norm{u_{0},\nabla \rho_0}_{\B^{-1+d/p}_{p,1}(\R^d)} \\
\underset{t\in \R_+}
\sup ( t)^{1/2} \|\nabla \rho(t)\|_{L_\infty(\R^d)}&\leq C
 \norm{u_{0},\nabla \rho_0}_{\B^{-1+d/p}_{p,1}(\R^d)}.\end{align*}
\end{corollary}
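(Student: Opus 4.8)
The plan is to mirror the proof of Corollary \ref{coro1d2} essentially verbatim, replacing the velocity $u$ by the density gradient $\nabla\rho$ throughout: wherever that argument invoked the maximal-regularity bounds on $\nabla^2 u$ in $L_{q,1}(\R_+;L_p)$ and on $t\nabla^2 u$ in $L_{s,1}(\R_+;L_m)$, I would instead use the bounds on $\nabla^2(\nabla\rho)$ and $t\nabla^2(\nabla\rho)$ furnished by Propositions \ref{prop0d2} and \ref{prop1}. Since $\nabla\rho$ solves the heat equation in \eqref{sys:ruv}, exactly the same time-weighted information is at hand, one derivative higher, so no new estimate is required—only careful bookkeeping of the order of differentiation.

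For the first bound, $\int_0^\infty\norm{\nabla^2\rho}_{L_\infty}\,dt\lesssim\norm{u_0,\nabla\rho_0}_{\B^{-1+d/p}_{p,1}}$, I would apply the Gagliardo--Nirenberg inequality to $z=\nabla^2\rho$. In two dimensions this reads
$$\norm{\nabla^2\rho}_{L_\infty(\R^2)}\lesssim\norm{\nabla^2(\nabla\rho)}_{L_p(\R^2)}^{1-2/m}\norm{\nabla^2(\nabla\rho)}_{L_m(\R^2)}^{2/m},$$
after which inserting the weight $t^{2/m}$ into the second factor yields a factor $t^{-2/m}$, and one closes with the H\"older inequality in Lorentz spaces (Proposition \ref{p:lorentz}) using $t\mapsto t^{-2/m}\in L_{m/2,\infty}(\R_+)$, the remaining two factors being controlled by Propositions \ref{prop0d2} and \ref{prop1}. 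In three dimensions the same scheme applies with the interpolation exponents $\frac{p(m-3)}{3(m-p)}$ and $\frac{m(3-p)}{3(m-p)}$ and the weight $t^{-\alpha}$, $\alpha=\frac{m(3-p)}{3(m-p)}$, which lies in $L_{1/\alpha,\infty}(\R_+)$.

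For the weighted $L_2$ bound I would use the embedding \eqref{eq:embed}, $\B^{d/m}_{m,1}\hookrightarrow L_\infty$, to write $t\norm{\nabla^2\rho}_{L_\infty}^2\leq t\norm{\nabla^2\rho}_{\B^{d/m}_{m,1}}\norm{\nabla^2\rho}_{L_\infty}$, and then, using the index identity $1+d/m=2-2/s$ to identify $\norm{\nabla^2\rho}_{\B^{d/m}_{m,1}}\simeq\norm{\nabla\rho}_{\B^{2-2/s}_{m,1}}$, conclude that $\int_0^\infty t\norm{\nabla^2\rho}_{L_\infty}^2\,dt\lesssim\norm{t\nabla\rho}_{L_\infty(\R_+;\B^{2-2/s}_{m,1})}\norm{\nabla^2\rho}_{L_1(\R_+;L_\infty)}$, both factors being controlled respectively by Proposition \ref{prop1} and by the first bound just established. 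For the pointwise-in-time estimate I would apply the product inequality $\norm{\nabla\rho}_{L_\infty}^2\lesssim\norm{\nabla\rho}_{L_d}\norm{\nabla^2\rho}_{L_\infty}\leq\norm{\nabla\rho}_{L_d}\norm{\nabla\rho}_{\B^{1+d/m}_{m,1}}$ with $z=\nabla\rho$, giving $t^{1/2}\norm{\nabla\rho(t)}_{L_\infty}\lesssim\norm{\nabla\rho(t)}_{L_d}^{1/2}\norm{t\nabla\rho(t)}_{\B^{1+d/m}_{m,1}}^{1/2}$; the first factor is bounded through the embedding \eqref{eq:emdld}, $\B^{-1+d/p}_{p,1}\hookrightarrow L_d$, together with Proposition \ref{prop0d2}, and the second through Proposition \ref{prop1} after again using $1+d/m=2-2/s$.

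I do not expect a genuine obstacle here, since the whole argument is routine once the substitution of $u$ by $\nabla\rho$ is made; the only points demanding care are purely organizational—keeping track that one now works one derivative higher, so the controlled maximal-regularity quantity is $\nabla^2(\nabla\rho)$ rather than $\nabla^2 u$, and consistently applying the Besov index identity $1+d/m=2-2/s$ to match exponents. The decisive structural fact is that these third-order density quantities are precisely what Propositions \ref{prop0d2} and \ref{prop1} deliver, which is what makes the transfer from the velocity corollary lossless.
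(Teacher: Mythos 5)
Your proposal is correct and is essentially the paper's own proof: the paper dispatches this corollary with the single remark ``by a similar argument,'' meaning precisely the verbatim transfer of the proof of Corollary \ref{coro1d2} with $u$ replaced by $\nabla\rho$ and $\nabla^2 u$, $t\nabla^2 u$ replaced by $\nabla^2(\nabla\rho)$, $t\nabla^2(\nabla\rho)$, which is what you carry out. Your bookkeeping (Gagliardo--Nirenberg applied to $z=\nabla^2\rho$, the identity $1+d/m=2-2/s$, and the inputs from Propositions \ref{prop0d2} and \ref{prop1}) matches the intended argument.
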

Until now, we have finished all the estimates in Theorem \ref{them1nsk}. In the next section, we will prove the global existence and uniqueness part of Theorem \ref{them1nsk}.

\section{Existence and uniqueness}\label{section3}
\subsection{Existence}
Now, for proving rigorously the global existence of a solution on $\R_+$ with the properties
   of Theorem \ref{them1nsk}, one can argue as in \cite{DW}. 
   In  order to pass to the limit, we shall use the compactness arguments. To overcome the difficulty from the Lorentz spaces $L_{q,1}$ are \emph{nonreflexive} (we  cannot resort to the classical results,  
for example the Aubin-Lions' lemma).
We  consider the approximate
solutions in the following larger (but reflexive)  space 
 $$\W^{2,1}_{p,r}(\R_+ \times \R^d):=
 \bigl\{u\in\cC_b(\R_+;\dot B^{2-2/r}_{p,r}(\R^d)\,:\,
 u_t,\nabla^2u\in L_r(\R_+;L_p(\R^d))\bigr\}
 $$ for some $ 1<r<\infty,$
 then check afterward that the constructed solution has  the desired regularity. 


 \medbreak
Firstly, let us smooth out the initial
 data $\rho_0$ and $u_0$ by means of non-negative mollifiers,  denote $a^n=\rho^n-1$ which is also satisfy the mass equation  of $(INSK),$ to get a  sequence $(\rho_0^n, u_0^n)_{n\in{\mathbb N}}$ of smooth data such that
\begin{equation}\label{eq:unifbound}
\norm{a^n_0}_{L_\infty}\leq  \norm{a_0}_{L_\infty},\quad \norm{u^n_{0},~\nabla \rho^n_0}_{\B^{-1+d/p}_{p,1}}\leq C \norm{u_{0},~ \nabla \rho_0}_{\B^{-1+d/p}_{p,1}}
\end{equation}
with, in addition, $$a^{n}_{0}\rightharpoonup a_0  \quad \text{weak * in}\quad L_\infty \andf 
u_{0}^n\to u_{0}, \quad \nabla \rho_{0}^n\to \nabla \rho_{0} \quad \text{strongly   in}\quad \B^{-1+d/p}_{p,1}. $$ 

For the System $(INSK), $
in light of e.g. \cite{BC, JB, BGLRS}, there exists $T>0$ such that  supplemented
with initial data $(\rho_0^n, u_0^n)$ admits a unique smooth local solution  $(\rho^n,u^n,\nabla P^n)$ on $[0,T]\times\R^d.$
In particular, 
$a^n\in\cC_b([0,T]\times\R^d)$ and
$(\nabla \rho^n, u^n,\nabla P^n)$ is in the following space for all $r\geq1$: 
 $$E^{p,r}_{T}=\bigl\{(u,\nabla P)\!\with\!  u\in \W^{2,1}_{p,r}(0,T\times \R^d) \!\andf\! \nabla P \in L_{r}(0,T;L_p)\bigr\}\cdotp$$
Denote $T^n$ is the maximal time of existence  of  $(\rho^n,u^n,\nabla P^n).$ Let $$v^n=u^n+\nabla \log \rho^n, $$ then the triple  $(\nabla \rho^n, v^n, u^n)$ satisfy
\begin{equation}
\label{sys:ruvn}
\left\{\begin{aligned}
&\d_t\nabla \rho^n-\nu\Delta \nabla \rho^n=-\nabla\div(\rho^n v^n),
\\
 &  \d_t u^n-\Delta u^n+\nabla P^n=-(\rho^n-1)\d_t u^n-\rho^n u^n\cdot \nabla u^n+g^n, \\
 &  \d_t v^n-\Delta v^n-\nabla \div v^n=-(\rho^n-1)\d_t v^n-\rho^n v^n\cdot \nabla v^n+\phi^n \\
 &\hspace{5cm}-\nabla P^n+\div((\mu(\rho^n)-\mu(1))D(v^n)),\\
&\div u^n=0,\\
&(\rho^n,u^n)|_{t=0}=(\rho^n_{0},u^n_{0})
\end{aligned}\right.
\end{equation}
with $g^n=\div((\mu(\rho^n)-\mu(1))D(u^n))-\div(k(\rho^n)\nabla \rho^n\otimes \nabla \rho^n),$ and
\begin{multline*}
    \phi^n:=
-\div(k(\rho^n)\nabla \rho^n \otimes \nabla \rho^n)-\nu\nabla \rho^n \cdot Dv^n+\nu\nabla \rho^n\cdot \nabla v^n\\+\nu^2 \nabla \rho^n \cdot \nabla(\nabla \log \rho^n)+
2\nu\div(\mu(\rho^n)\nabla (\nabla \log \rho^n)).
\end{multline*}
 Because of  the calculations of the previous sections  just follow from  the properties of the heat flow, Stokes equation, Lam\'e equation,  transport equation, and basic functional analysis, each $(a^{n}, \nabla \rho^n, u^{n},\nabla P^n)$ satisfies the estimates in $[0, T^n]$,  and thus 
 \begin{equation}\label{eq:anbound}
 \norm{a^{n}(t)}_{L_\infty}= \norm{a^{n}_0}_{L_\infty} \leq \norm{a_0}_{L_\infty} \quad \text{for all} \quad t\in [0, T^n].
 \end{equation}
and, due to \eqref{eq:unifbound}, which gives rise to
 \begin{equation}\label{eq:unlorentz}
 \norm{u^n, \nabla \rho^n, v^n}_{\W^{2,1}_{p,(q,1)}(0,T^n\times\R^d)}+\norm{\nabla P^n}_{L_{q,1}(0,T^n;L_p)}\leq C\norm{u_0,\nabla \rho_0}_{\B^{-1+d/p}_{p,1}}\cdotp
 \end{equation}
Moreover, taking any $r\in(1,\infty)$
and 
applying Proposition 2.1 in \cite{DM2}, Proposition \ref{propregularity} and \ref{pro:mrforlame} with $q=r$~to \eqref{sys:ruvn}.
All the decomposition \eqref{eq:divmr}, \eqref{eq:mr}, \eqref{eq:krho}, \eqref{eq:nnr} and \eqref{eq:ndrv} yields for all $T<T^n,$
\begin{small}
\begin{equation*}
\begin{aligned}
\norm{u^n, v^n/2, \nabla u^n, \nabla P^n/2}_{E^{p,r}_T}\lesssim& \norm{u^n_0, \nabla \rho^n_0}_{\B^{2-2/r}_{p,r}}+\norm{a^n \d_t u^n+(1+a^n) u^n\cdot \nabla u^n}_{L_{r}(0,T;L_p)}\\
&+\norm{a^n \d_t v^n+(1+a^n) v^n\cdot \nabla v^n}_{L_{r}(0,T;L_p)}\\
&+\norm{a^n \Delta u^n}_{L_{r}(0,T;L_p)}+\norm{a^n \Delta v^n}_{L_{r}(0,T;L_p)}\\
&+\norm{\nabla \rho^n\cdot \nabla v^n}_{L_{r}(0,T;L_p)}+\norm{\nabla \rho^n\cdot Dv^n}_{L_{r}(0,T;L_p)}\\
&+\norm{\abs{\nabla \rho^n}^2\cdot \nabla \rho^n}_{L_{r}(0,T;L_p)}+\norm{\nabla \rho^n\cdot \Delta\rho^n}_{L_{r}(0,T;L_p)}\\
&+\norm{\nabla \rho^n\cdot\nabla^2 \rho^n}_{L_{r}(0,T;L_p)}+\norm{\nabla \rho^n\cdot D( u^n)}_{L_{r}(0,T;L_p)}\\
&+\norm{\nabla \rho^n\cdot \div v^n}_{L_{r}(0,T;L_p)}+\norm{v^n\nabla(\nabla \rho^n)}_{L_{r}(0,T;L_p)}\\
&+\norm{\rho^n \nabla^2(\nabla \rho^n)}_{L_{r}(0,T;L_p)}+\norm{\rho^n\nabla \div v^n}_{L_{r}(0,T;L_p)}.
\end{aligned}
\end{equation*}
\end{small}
For the last two terms, we write, using \eqref{eq:esimpot} and $\nabla \div=\q \Delta$ with $\q$ is bound on $L_{r}(0,T;L_p),$
\begin{multline*}
    \norm{\rho^n \nabla^2(\nabla \rho^n)}_{L_{r}(0,T;L_p)}+\norm{\rho^n\nabla \div v^n}_{L_{r}(0,T;L_p)}\\
    \lesssim \norm{\nabla\rho^n }_{L_{\infty}(0,T;\B^{-1+d/p}_{p,1})} ( \norm{\nabla^2(\nabla \rho^n)}_{L_{r}(0,T;L_p)}+\norm{\nabla^2 v^n}_{L_{r}(0,T;L_p)}).
\end{multline*}
For the first three lines, the H\"older inequality yields
\begin{align*}
   & \norm{a^n \d_t u^n+(1+a^n) u^n\cdot \nabla u^n}_{L_{r}(0,T;L_p)}\\\lesssim & \|a^n\|_{L_\infty([0,T]\times \R^d)}\norm{ \d_t u^n}_{L_{r}(0,T;L_p)}+ \|a^n+1\|_{L_\infty([0,T]\times \R^d)}\norm{ u^n\cdot \nabla u^n}_{L_{r}(0,T;L_p)}\\
   & \norm{a^n \d_t v^n+(1+a^n) v^n\cdot \nabla v^n}_{L_{r}(0,T;L_p)}\\\lesssim & \|a^n\|_{L_\infty([0,T]\times \R^d)}\norm{ \d_t v^n}_{L_{r}(0,T;L_p)}+ \|a^n+1\|_{L_\infty([0,T]\times \R^d)}\norm{ v^n\cdot \nabla v^n}_{L_{r}(0,T;L_p)}\\
   & \norm{a^n \Delta u^n}_{L_{r}(0,T;L_p)}+\norm{a^n \Delta v^n}_{L_{r}(0,T;L_p)}\\
   \lesssim& \|a^n\|_{L_\infty([0,T]\times \R^d)}(\norm{\Delta u^n}_{L_{r}(0,T;L_p)}+\norm{\Delta v^n}_{L_{r}(0,T;L_p)}).
\end{align*}

Hence, taking advantage of \eqref{inidr}, \eqref{eq:anbound}, \eqref{eq:unlorentz}, 
the above inequality becomes 
\begin{equation*}
\begin{aligned}
\norm{u^n, v^n/2, \nabla u^n, \nabla P^n/2}_{E^{p,r}_T}\lesssim &\norm{u^n_0, \nabla \rho^n_0}_{\B^{2-2/r}_{p,r}}+\norm{u^n\cdot \nabla u^n}_{L_{r}(0,T;L_p)}+\norm{ v^n\cdot \nabla v^n}_{L_{r}(0,T;L_p)}\\
&+\norm{\nabla \rho^n\cdot \nabla v^n}_{L_{r}(0,T;L_p)}+\norm{\nabla \rho^n\cdot \div v^n}_{L_{r}(0,T;L_p)}\\
&+\norm{\nabla \rho^n\cdot Dv^n}_{L_{r}(0,T;L_p)}+\norm{\nabla \rho^n\cdot D(u^n)}_{L_{r}(0,T;L_p)}\\
&+\norm{\abs{\nabla \rho^n}^2\cdot \nabla \rho^n}_{L_{r}(0,T;L_p)}+\norm{\nabla \rho^n\cdot \Delta\rho^n}_{L_{r}(0,T;L_p)}\\
&+\norm{\nabla \rho^n\cdot\nabla^2 \rho^n}_{L_{r}(0,T;L_p)}+\norm{v^n\nabla(\nabla \rho^n)}_{L_{r}(0,T;L_p)}.
\end{aligned}
\end{equation*}
Thus, defining $\beta$ by  ${1}/{\beta}+{1}/{d}-{1}/{dr}={1}/{p}$ and using again the H\"older inequality, we get
\begin{small}
  \begin{align*}
     &\norm{u^n\cdot \nabla u^n}_{L_p}\lesssim  \norm{u^n}_{L_{\frac{dr}{r-1}}}\norm{\nabla u^n}_{L_{\beta}}, &\norm{v^n\cdot \nabla v^n}_{L_p}\lesssim  \norm{v^n}_{L_{\frac{dr}{r-1}}}\norm{\nabla v^n}_{L_{\beta}},\\
     &\norm{\nabla \rho^n \cdot \nabla v^n}_{L_p}\lesssim  \norm{\nabla \rho^n}_{L_{\frac{dr}{r-1}}}\norm{\nabla v^n}_{L_{\beta}},&\norm{\nabla \rho^n\cdot \div v^n}_{L_p}\lesssim  \norm{\nabla \rho^n}_{L_{\frac{dr}{r-1}}}\norm{\div v^n}_{L_{\beta}}, \\
     &\norm{\nabla \rho^n\cdot Dv^n}_{L_p}\lesssim  \norm{\nabla \rho^n}_{L_{\frac{dr}{r-1}}}\norm{\nabla v^n}_{L_{\beta}},& \norm{\nabla \rho^n \cdot D(u^n)}_{L_p}\lesssim  \norm{\nabla \rho^n}_{L_{\frac{dr}{r-1}}}\norm{ D( u^n)}_{L_{\beta}},\\
       &\norm{\abs{\nabla \rho^n}^2\cdot \nabla \rho^n}_{L_p}\lesssim  \norm{\nabla \rho^n}_{L_{\frac{dr}{r-1}}}\norm{|\nabla \rho^n|^2}_{L_{\beta}},&\norm{\nabla \rho^n \cdot \Delta \rho^n}_{L_p}\lesssim  \norm{\nabla \rho^n}_{L_{\frac{dr}{r-1}}}\norm{\nabla^2 \rho^n}_{L_{\beta}},\\
        &\norm{\nabla \rho^n\cdot \nabla^2 \rho^n}_{L_p}\lesssim  \norm{\nabla \rho^n}_{L_{\frac{dr}{r-1}}}\norm{\nabla^2 \rho^n}_{L_{\beta}},&  \norm{v^n \cdot \nabla(\nabla  \rho^n)}_{L_p}\lesssim  \norm{v^n}_{L_{\frac{dr}{r-1}}}\norm{\nabla^2 \rho^n}_{L_{\beta}}.
 \end{align*}
 \end{small}
As a consequence, one can obtain
 \begin{equation*}
 \begin{aligned}
\norm{u^n, v^n/2, \nabla u^n, \nabla P^n/2}^r_{E^{p,r}_T} 
 \lesssim &\norm{u^n_0}^{r}_{\B^{2-2/r}_{p,q}}+\int_{0}^{T} (\norm{u^n}^{r}_{L_{\frac{dr}{r-1}}}+\norm{\nabla \rho^n}^{r}_{L_{\frac{dr}{r-1}}})\norm{\nabla u^n}^{r}_{L_{\beta}}\,dt\\&+\int_{0}^{T} (\norm{v^n}^{r}_{L_{\frac{dr}{r-1}}}+\norm{\nabla \rho^n}^{r}_{L_{\frac{dr}{r-1}}})(\norm{\nabla v^n}^{r}_{L_{\beta}}+\norm{\nabla^2 \rho^n}^{r}_{L_{\beta}})\,dt\\
&\qquad\qquad +\int_{0}^{T}\norm{\nabla \rho^n}^r_{L_{\frac{dr}{r-1}}}\norm{\nabla \rho^n}^{2r}_{L_{2\beta}}\,dt.
 \end{aligned}
 \end{equation*}
To deal with the last term of the above inequality, we use Proposition \ref{interpolation} to get
\begin{align*}
    &\|z\|_{\B^{d(\frac{1}{p}-\frac{1}{2\beta})}_{p,1}}\lesssim \|z\|^{1/2}_{\B^{-1+\frac dp}_{p,\infty}}\|z\|^{1/2}_{\B^{1+\frac dp-\frac d\beta}_{p,\infty}},\\
   & \|z\|_{\B^{1+\frac dp-\frac d\beta}_{p,\infty}}\lesssim \|z\|^{1/2}_{\B^{2}_{p,\infty}}\|z\|^{1/2}_{\B^{2-\frac 2r}_{p,\infty}},
\end{align*}
which along with embedding
$\B^{d(1/p-1/2\beta)}_{p,1}\hookrightarrow \B^0_{2\beta,1}\hookrightarrow L_{2\beta}$ implies
$$\int_{0}^{T}\norm{\nabla \rho^n}^r_{L_{\frac{dr}{r-1}}}\norm{\nabla \rho^n}^{2r}_{L_{2\beta}}\,dt\lesssim \int_{0}^{T}\norm{\nabla \rho^n}^r_{L_{\frac{dr}{r-1}}}\norm{\nabla \rho^n}^r_{\B^{-1+d/p}_{p,1}}\norm{\nabla \rho^n}^{\frac r2}_{\B^{2-2/r}_{p,r}} \norm{\nabla^2(\nabla \rho^n)}^{\frac r2}_{L_p}\,dt.$$
As a consequence, Proposition \ref{prop:for existence} and Young's inequality gives for all $\eps>0,$
\begin{small}
\begin{multline*}
  \norm{u^n, v^n/2, \nabla u^n, \nabla P^n/2}^r_{E^{p,r}_T}\leq
C\norm{u^n_0,\nabla \rho^n}^r_{\B^{2-2/r}_{p,r}} +\eps \int_0^T 
    \norm{\nabla^{2} u^n, ~\nabla^{2} v^n, ~\nabla^{2} (\nabla \rho^n) }^{r}_{L_p}\,dt\\+C_\eps \biggl(\int_0^T (\norm{u^n}^{2r}_{L_{\frac{dr}{r-1}}}+\norm{\nabla \rho^n}^{2r}_{L_{\frac{dr}{r-1}}})
    \norm{u^n}^{r}_{\B^{2-2/r}_{p,r}}\,dt\\
    +\int_0^T (\norm{v^n}^{2r}_{L_{\frac{dr}{r-1}}}+\norm{\nabla \rho^n}^{2r}_{L_{\frac{dr}{r-1}}})
    (\norm{v^n}^{r}_{\B^{2-2/r}_{p,r}}+\norm{\nabla \rho^n}^{r}_{\B^{2-2/r}_{p,r}})\,dt\\
    +\norm{\nabla \rho^n}^{2r}_{L_\infty(0,T;\B^{-1+d/p}_{p,1})} \int_{0}^{T}\norm{\nabla \rho^n}^{2r}_{L_{\frac{dr}{r-1}}}\norm{\nabla \rho^n}^{r}_{\B^{2-2/r}_{p,r}} \,dt\biggr).
\end{multline*}
\end{small}

Then, taking $\eps$ small enough and using Gronwall's inequality yields
\begin{equation}\label{eq:unest}
 \norm{u^n, v^n/2, \nabla u^n, \nabla P^n/2}^r_{E^{p,r}_T}\leq C \norm{u^n_0, \nabla \rho^n_0}^r_{\B^{2-2/r}_{p,r}} \exp(Ch(T))\cdotp
\end{equation}
with $$h(T)=\int_0^T (\norm{u^n}^{2r}_{L_{\frac{dr}{r-1}}}+\norm{v^n}^{2r}_{L_{\frac{dr}{r-1}}}) \,dt+(1+\norm{\nabla \rho^n}^{2r}_{L_\infty(0,T;\B^{-1+d/p}_{p,1})})\int_0^T \norm{\nabla \rho^n}^{2r}_{L_{\frac{dr}{r-1}}}\,dt .$$

In the end, Gagliardo-Nirenberg inequality and embedding give
\begin{equation}
    \label{eq:inldr}
    \norm{z}_{L_{\frac{dr}{r-1}}}\leq \norm{z}^{\frac 1r}_{L_{\infty}}\norm{z}^{1-\frac 1r}_{\B^{-1+d/p}_{p,1}},
\end{equation}
which implies that
\begin{equation}\label{eq:new}
\begin{aligned}
   & \int_0^T \norm{u^n}^{2r}_{L_{\frac{dr}{r-1}}} \,dt 
\leq \norm{u^n}^2_{L_2(0,T;L_\infty)}\norm{u^n}^{2(r-1)}_{L_\infty(0,T;\B^{-1+d/p}_{p,1})},\\
&\int_0^T \norm{v^n}^{2r}_{L_{\frac{dr}{r-1}}} \,dt 
\leq \norm{v^n}^2_{L_2(0,T;L_\infty)}\norm{v^n}^{2(r-1)}_{L_\infty(0,T;\B^{-1+d/p}_{p,1})},\\
& \int_0^T \norm{\nabla \rho^n}^{2r}_{L_{\frac{dr}{r-1}}} \,dt 
\leq \norm{\nabla \rho^n}^2_{L_2(0,T;L_\infty)}\norm{\nabla \rho^n}^{2(r-1)}_{L_\infty(0,T;\B^{-1+d/p}_{p,1})}.
\end{aligned}
\end{equation}

Now, we deduce from 
 Proposition \ref{prop0d2}
 that the right-hand side of \eqref{eq:new}  are bounded by $\|u_0, \nabla \rho_0\|_{\dot B^{-1+d/p}_{p,1}}.$
 Hence,  coming back to \eqref{eq:unest} and 
 using a classical continuation argument ensures that the solution is global, and the approximate solution such that $$(\nabla \rho^n,  u^n) \in W^{2,1}_{p,r}(\R_+\times\R^d)\times W^{2,1}_{p,r}(\R_+\times\R^d)$$ with $1<r<\infty.$
Otherwise, owing to the solution is smooth and
 \eqref{eq:unifbound} is satisfied, all the a priori  estimates
 of Sections \ref{section2}
 are satisfied uniformly with respect to $n.$
 
In addition, 
$(u^{n}, \nabla P^n)_{n\in\N}$ is  bounded in 
$E^{p,q}_T$  and $ \nabla \rho^n$ is bounded in $ W^{2,1}_{p,r}([0,T]\times\R^d)$ for all $T\geq0.$
 This,  along with   \eqref{eq:anbound} and \eqref{eq:unlorentz} guarantee that 
 there exists a subsequence, still denoted by $(a^{n},\nabla \rho^n, u^{n},\nabla P^{n})_{n\in{\mathbb N}},$ and $(a,\nabla \rho, u,\nabla P)$ with
 $$a\in L_{\infty}(\R_{+}\times\R^{d}),\quad \nabla P\in L_{q}(\R_{+};L_{p}(\R^{d}))\andf u,~ \nabla \rho\in \W^{2,1}_{p,q}(\R_{+}\times \R^d)$$
 such that 
 \begin{equation}\label{eq:weak limit}
 \begin{aligned}
 &a^{n}\rightharpoonup a\quad \text{weak *  in } \quad L_{\infty}(\R_{+}\times\R^{d}),\\
  &u^{n}\rightharpoonup u\quad \text{weak *  in } \quad L_{\infty}(\R_{+};\dot B^{2-2/q}_{p,q}),\\
 &(\d_{t}u^{n}, \nabla^{2}u^{n})\rightharpoonup (\d_{t}u, \nabla^{2}u)\quad  \text{weakly in} \quad 
 L_{q}(\R_{+};L_{p}),\\
 &\nabla P^{n} \rightharpoonup \nabla P\quad  \text{weakly in} \quad  
 L_{q}(\R_{+};L_{p}),\\
 &\nabla \rho^{n}\rightharpoonup \nabla \rho\quad \text{weak *  in } \quad L_{\infty}(\R_{+};\dot B^{2-2/q}_{p,q}),\\
 &(\d_{t}\nabla \rho^{n}, \nabla^{2}\rho^{n})\rightharpoonup (\d_{t}(\nabla \rho), \nabla^{2}(\nabla \rho))\quad  \text{weakly in} \quad 
 L_{q}(\R_{+};L_{p}),\\
 \end{aligned}
 \end{equation}
In terms of the Fatou property, it holds for
 all the spaces under consideration 
in the previous sections, the estimates proved therein
as still valid. For example, 
one gets
  $$\nabla \rho,~ u\in \W^{2,1}_{p,(q,1)}(\R_{+}\times \R^d)\quad \text{and} \quad \nabla P\in L_{q,1}(\R_+;L_p (\R^d)).$$
Note that the fact  that $(\d_t u^n, \d_t (\nabla \rho^n))_{n\in {\mathbb N}}$ is bounded in $L_q(\R_{+};L_p)$, so that we can use  
 the   Arzel\`a-Ascoli Theorem 
 to get strong convergence 
results for $u, \nabla \rho$ like, 
for instance, for all small enough $\varepsilon>0,$  
\begin{equation}\label{eq:strong limit}
\begin{aligned}
&u^{n}\rightarrow u \ \text{strongly in}\  L_{\infty,loc}(\R_{+};L_{d-\varepsilon,loc}(\R^d)),\\
&\nabla u^{n}\rightarrow \nabla u \ \text{strongly  in }\ L_{q,loc}(\R_{+};L_{p^*-\varepsilon,loc})\with\frac1{p^*}
=\frac1p-\frac1d\\
&\nabla \rho^{n}\rightarrow \nabla \rho \ \text{strongly in}\  L_{\infty,loc}(\R_{+};L_{d-\varepsilon,loc}(\R^d)),\\
&\nabla^2 \rho^{n}\rightarrow \nabla^2 \rho\ \text{strongly  in }\ L_{q,loc}(\R_{+};L_{p^*-\varepsilon,loc})\with\frac1{p^*}
=\frac1p-\frac1d
\cdotp\end{aligned}
\end{equation}
This allows to pass to the limit in 
the convection term and  the capillary term of the momentum 
equation of $(INSK)$. On the other hand, 
to pass
to the limit in the terms containing $a^n$
and conclude that $(a, \nabla \rho, u,\nabla P)$ is a global weak solution. 
Since  $a\in L_\infty(\R_+\times\R^d),$ $\nabla^2 \rho\in L_{2q}(\R_{+};L_{\frac{dq}{2q-1}}),$ $\nabla u\in L_{2q}(\R_{+};L_{\frac{dq}{2q-1}})$ and $\div u=0,$
the Di Perna - Lions
theory in \cite{DL1989} ensures that  $a$ is the only solution to
the mass  equation of $(INSK)$ and  that
$$a^{n}\rightarrow a \ \text{strongly  in }\ L_{\alpha,loc}(\R_{+}\times \R^d)\quad\hbox{for all }\ 1<\alpha<\infty.$$ 
 
   Finally, using the strong convergence results and passing to the limit in the 
momentum equation of $(INSK),$ we can complete the proof of the global existence of $(INSK).$

\medbreak
\subsection{Uniqueness}
To prove the uniqueness of the solution,  it is easier to proceed as C. Burtea and F. Charve
in \cite{BC} and to rewrite the system in Lagrangian coordinates.
So we introduce the flow associated with $u$ which is the solution to
\begin{equation*}
\left\{\begin{aligned}
&\frac{d}{dt}X(t,y)=u(t,X(t,y)) \\
 &X|_{t=0}=y.
\end{aligned}\right.
\end{equation*} 
As the velocity is divergence-free, the jacobian determinant satisfies $\det(D_y X)\equiv 1.$  In Lagrangian coordinates $(t,y),$ a solution $(\rho , u, P)$ to $(INSK)$ is recast in $(\eta, v, Q)$ as
$$\eta(t,y):=\rho(t,X(t,y)),\!\quad\! v(t,y):=u(t,X(t,y)) \andf Q(t,y):=P(t,X(t,y))$$  and the triplet satisfies
\begin{equation}\label{sys:inskl}
\left\{\begin{aligned}
&\d_t \eta=0, \\
 & \eta \d_tv-\div(\mu(\eta)A^t D_A(v))+A\nabla G=-\div(k(\eta)A^t(A \nabla \eta)\otimes (A \nabla \eta) )\\
&\div(A v)=0\\
&(\eta,v)|_{t=0}=(\rho_{0},u_{0}) 
\end{aligned}\right.
\end{equation}
where $A=(\nabla_y X)^{-1}$ and $D_{A}(v)=Dv\cdot A^t+A \cdot \nabla v.$ Based on the prior estimates got before, see Corollary \ref{coro1d2},  we find that  the solution of $(INSK)$ construct in Theorem \ref{them1nsk} satisfies  
$$\int_{0}^{\infty} \norm{\nabla u}_{L_{\infty}}\,dt\leq  C \norm{u_{0},\nabla \rho_0}_{\B^{-1+d/p}_{p,1}}<1$$
which implies \eqref{sys:inskl} is equivalent to $(INSK).$  Hence the uniqueness can be done in the Lagrangian coordinates, see e.g. \cite{CB2017}. As the density becomes constant, we just need to consider the differences $(\delta u, \delta P)$    in space $ E^{p}_{q,1}$  with
$$E^{p}_{q,r}=\bigl\{(u,\nabla P)\!\with\!  u\in \W^{2,1}_{p,(q,r)}(\R_+\times \R^d) \!\andf\! \nabla P \in L_{q, r}(\R_+;L_p)\bigr\}\cdotp
$$

\subsection* {Acknowledgments:}
The author acknowledges funding from the Italian Ministry of University and Research, project PRIN 2022HSSYPN. The author would like to thank her PhD thesis 
supervisor Prof. Rapha\"el Danchin.

\appendix
\section{}
Here, we present some results of 
Besov spaces and Lorentz spaces, 
prove maximal regularity estimates 
in Lorentz spaces for \eqref{eq:stokes}, \eqref{sys:lema}. The following results was presented in \cite{DW}, so we omit the details of proofs here.

\medbreak
The following properties of Lorentz spaces may be found in \cite{LG, DW}.
(and  \cite[Th2:1.18.6]{HT} for the first item):
\begin{proposition}\label{p:lorentz}
For Lorentz space, we have the following properties:
\begin{enumerate}
\item {\rm Interpolation}: For all $1\leq r,q\leq \infty$ and $\theta\in (0,1)$, we have 
$$\left(L_{p_{1}}(\R_{+};L_{q}(\R^{d}));L_{p_{2}}(\R_{+};L_{q}(\R^{d}))\right)_{\theta,r}=L_{p,r}(\R_{+};L_{q}(\R^{d}))),$$
where $1<p_{1}<p<p_{2}<\infty$ are such that  $\frac{1}{p}=\frac{(1-\theta)}{p_{1}}+\frac{\theta}{p_{2}}\cdotp$
\item {\rm Embedding}: $L_{p,r_{1}}\hookrightarrow L_{p,r_{2}} \ \text{if}\  r_{1}\leq r_{2},$ and $L_{p,p}=L_{p}.$ 
\item {\rm H\"older inequality}: for $1<p,p_{1},p_{2}<\infty$ and $1 \leq r,r_{1},r_{2}\leq \infty,$
 we have $$\norm{fg}_{L_{p,r}}\lesssim \norm{f}_{L_{p_{1},r_{1}}}\norm{g}_{L_{p_{2},r_{2}}}\quad\hbox{if}\quad \frac{1}{p}=\frac{1}{p_{1}}+\frac{1}{p_{2}}\andf\frac{1}{r}=\frac{1}{r_{1}}+\frac{1}{r_{2}}\cdotp$$ 
 This still holds for couples $(1,1)$ and $(\infty,\infty)$ with the convention $L_{1,1}=L_{1}$ and $L_{\infty,\infty}=L_{\infty}.$
\item For any $\alpha>0$ and nonnegative measurable function $f,$ we have  $\norm{f^{\alpha}}_{L_{p,r}}=\norm{f}^{\alpha}_{L_{p\alpha,r\alpha}}$.
\item For any  $k>0$, we have $\norm{x^{-k}1_{\R_+}}_{L_{1/k,\infty}}=1.$ 
\end{enumerate}
\end{proposition}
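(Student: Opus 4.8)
The plan is to treat the five assertions by the mechanism appropriate to each, reducing the elementary ones to direct computation with the decreasing rearrangement $f^{*}$ and the distribution function, and invoking classical interpolation for the two genuinely functional-analytic items. Since these are standard facts (as flagged by the citations to \cite{LG, DW, HT}), the proposal is really to indicate the right tool for each item rather than to grind through constants.

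I would first dispose of the two self-contained identities. For item (4), the starting point is that positive powers commute with rearrangement, $(f^{\alpha})^{*}(t)=(f^{*}(t))^{\alpha}$; substituting this into the defining integral and using the pointwise identity $(t^{1/(p\alpha)}f^{*}(t))^{r\alpha}=(t^{1/p}(f^{*}(t))^{\alpha})^{r}$, the exponents $1/(r\alpha)$ and $1/r$ match up to give $\norm{f}_{L_{p\alpha,r\alpha}}^{\alpha}=\norm{f^{\alpha}}_{L_{p,r}}$. For item (5), I would compute the distribution function of $x^{-k}\mathbf 1_{\R_+}$ explicitly: since $x^{-k}>s$ iff $0<x<s^{-1/k}$, one has $\nu(\{x^{-k}>s\})=s^{-1/k}$, so with $p=1/k$ and $r=\infty$ the equivalent quasi-norm from Remark \ref{lorentzdef2} becomes $\underset{s>0}{\sup}\,s\,(s^{-1/k})^{k}=\underset{s>0}{\sup}\,s\cdot s^{-1}=1$.

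For the embedding in item (2), the inclusion $L_{p,r_{1}}\hookrightarrow L_{p,r_{2}}$ when $r_{1}\le r_{2}$ follows from the monotonicity of $\ell_{r}$-type norms after dyadically discretizing the measure $dt/t$, and the identity $L_{p,p}=L_{p}$ is the one-line Fubini computation writing $\norm{f}_{L_p}^{p}=p\int_{0}^{\infty}s^{p-1}\nu(\{\abs{f}>s\})\,ds$. For the interpolation identity of item (1), I would simply invoke the classical real-interpolation theorem for vector-valued Lebesgue spaces with the target space $X=L_q(\R^d)$ held fixed, namely $(L_{p_1}(\R_+;X),L_{p_2}(\R_+;X))_{\theta,r}=L_{p,r}(\R_+;X)$ whenever $\tfrac1p=\tfrac{1-\theta}{p_1}+\tfrac{\theta}{p_2}$; this is precisely the statement cited from \cite[Th2:1.18.6]{HT}, so the only thing to verify is that the admissibility constraints $1<p_1<p<p_2<\infty$ are met.

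The main obstacle is item (3), the generalized H\"older inequality. The cleanest route I see is bilinear real interpolation of the pointwise-multiplication map: one starts from the trivial diagonal bound $\norm{fg}_{L_p}\le\norm{f}_{L_{p_1}}\norm{g}_{L_{p_2}}$ and upgrades it to the off-diagonal indices $(p_i,r_i)$, which produces exactly the index bookkeeping $\tfrac1p=\tfrac1{p_1}+\tfrac1{p_2}$ and $\tfrac1r=\tfrac1{r_1}+\tfrac1{r_2}$. Alternatively one argues directly from O'Neil's rearrangement inequality $(fg)^{*}(t)\lesssim \tfrac1t\int_0^t f^{*}(\tau)g^{*}(\tau)\,d\tau+\int_t^\infty f^{*}(\tau)g^{*}(\tau)\,d\tau$ combined with Hardy's inequalities. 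I would then handle the degenerate couples $(1,1)$ and $(\infty,\infty)$ separately by hand, using the conventions $L_{1,1}=L_1$ and $L_{\infty,\infty}=L_\infty$ recorded in the statement; it is precisely this endpoint bookkeeping, rather than any deep idea, where the work concentrates.
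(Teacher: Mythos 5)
Your proposal is correct and is essentially the route the paper takes: the paper gives no proof at all, simply citing \cite{LG,DW} and \cite[Th2:1.18.6]{HT}, and your sketch reproduces exactly the standard arguments from those references (real interpolation of vector-valued Lebesgue spaces for item (1), O'Neil's inequality or bilinear interpolation for item (3)), while correctly filling in the elementary rearrangement and distribution-function computations for items (2), (4) and (5).
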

Next, let us  state a few classical properties of Besov spaces.
\begin{proposition}[Besov embedding]\label{p:A1} The following embedding for Besov space holds, \begin{enumerate}
\item  For any $(p,q)$ in $[1,\infty]^{2}$ such that $p\leq q,$ we have
 $$\B^{d/p-d/q}_{p,1}(\R^{d})\hookrightarrow L_{q}(\R^{d}).$$
\item  Let $1\leq p_{1}\leq p_{2}\leq \infty$ and $1\leq r_{1}\leq r_{2}\leq \infty.$ Then, for any real number $s$, 
$$\B^{s}_{p_{1},r_{1}}(\R^{d})\hookrightarrow \B^{s-d(\frac{1}{p_{1}}-\frac{1}{p_{2}})}_{p_{2},r_{2}}(\R^{d}).
$$
\end{enumerate}
\end{proposition}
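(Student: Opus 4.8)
The plan is to reduce both embeddings to a single ingredient, namely Bernstein's inequality for frequency-localized functions: if $\widehat{w}$ is supported in a dyadic annulus $2^j\mathcal C$ (as is the case for each block $\dot\Delta_j w$), then for $1\leq a\leq b\leq\infty$ one has $\|w\|_{L_b}\lesssim 2^{jd(1/a-1/b)}\|w\|_{L_a}$. This is classical (see \cite[Chap.~2]{BCD}), so I would simply invoke it. Everything then follows by manipulating the defining series $\|u\|_{\B^s_{p,r}}=\|(2^{js}\|\dot\Delta_j u\|_{L_p})_{j}\|_{\ell_r}$.

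For item (2), I would apply Bernstein to $w=\dot\Delta_j u$ with $a=p_1$ and $b=p_2$, giving $\|\dot\Delta_j u\|_{L_{p_2}}\lesssim 2^{jd(1/p_1-1/p_2)}\|\dot\Delta_j u\|_{L_{p_1}}$. Writing $s'=s-d(1/p_1-1/p_2)$ and multiplying by $2^{js'}$ yields $2^{js'}\|\dot\Delta_j u\|_{L_{p_2}}\lesssim 2^{js}\|\dot\Delta_j u\|_{L_{p_1}}$ term by term in $j$. Taking the $\ell_{r_2}$ norm and using the elementary sequence embedding $\ell_{r_1}\hookrightarrow\ell_{r_2}$ (valid since $r_1\leq r_2$) then gives $\|u\|_{\B^{s'}_{p_2,r_2}}\lesssim\|u\|_{\B^{s}_{p_1,r_1}}$, which is exactly the claim.

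For item (1), I would again use Bernstein with $a=p$ and $b=q$ on each block, so that $\|\dot\Delta_j u\|_{L_q}\lesssim 2^{jd(1/p-1/q)}\|\dot\Delta_j u\|_{L_p}=2^{js}\|\dot\Delta_j u\|_{L_p}$ with $s=d/p-d/q$. Since the target is the genuine Lebesgue norm, I would reconstruct $u=\sum_{j\in\Z}\dot\Delta_j u$ and estimate directly by the triangle inequality in $L_q$, namely $\|u\|_{L_q}\leq\sum_j\|\dot\Delta_j u\|_{L_q}\lesssim\sum_j 2^{js}\|\dot\Delta_j u\|_{L_p}=\|u\|_{\B^s_{p,1}}$. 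Here the summability index $r=1$ is precisely what lets the bare triangle inequality close the estimate, which is why the statement is sharp at $r=1$.

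The main obstacle is not the frequency estimates, which are routine, but the homogeneous low-frequency issue hidden in item (1): one must guarantee that $u$ is genuinely recovered as $\sum_j\dot\Delta_j u$ in $\mathcal S'$, rather than only modulo a polynomial, so that the $L_q$ bound controls $u$ itself. This is exactly what the extra condition \eqref{eq:lf} in the definition of $\B^s_{p,r}$ secures: since $\dot S_j u\to 0$ in $L_\infty$ as $j\to-\infty$, the partial sums converge to $u$, and the finiteness of $\sum_j 2^{js}\|\dot\Delta_j u\|_{L_p}$ forces convergence of $\sum_j\dot\Delta_j u$ in $L_q$, so the limit is the tempered distribution $u$ and the resulting function indeed lies in $L_q$. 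I would spell out this convergence step carefully; the remaining estimates are immediate from Bernstein.
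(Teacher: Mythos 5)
The paper does not actually prove this proposition: it is stated in the appendix as a classical fact, with the explicit remark that the details are omitted and the reader is referred to \cite{DW} and the standard references (\cite{BCD}, \cite{HT}). So there is nothing in the paper to compare against; what can be said is that your argument is the standard proof and it is correct. Item (2) is exactly the mechanical computation you describe: Bernstein's lemma on each block $\dot\Delta_j u$ gives $2^{js'}\|\dot\Delta_j u\|_{L_{p_2}}\lesssim 2^{js}\|\dot\Delta_j u\|_{L_{p_1}}$ with $s'=s-d(1/p_1-1/p_2)$, and the inclusion $\ell_{r_1}\hookrightarrow\ell_{r_2}$ finishes it (the low-frequency condition \eqref{eq:lf} is a property of $u$ itself and transfers for free). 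For item (1), you correctly identify the only non-mechanical point: one must know that $\sum_j\dot\Delta_j u$ converges to $u$ in $\mathcal S'$ and not merely modulo polynomials, and this is precisely what \eqref{eq:lf} guarantees, since $u-\dot S_ju=\sum_{j'\geq j}\dot\Delta_{j'}u$ converges in $L_q$ (hence in $\mathcal S'$) by the summability you establish, and $\dot S_ju\to0$ in $L_\infty$ as $j\to-\infty$. With that convergence in hand, the triangle inequality in $L_q$ closes the estimate, and the restriction to third index $r=1$ is indeed what makes this work. No gaps.
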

The Bernstein inequality (see \cite[Lemma 2.1.]{BCD}) enables us to establish the following useful property.
\begin{proposition}
  \label{dervieq}
  Let $(s,p,r)$ be in $\R\times [1,\infty]^2.$ Let $u$ be in $\B^s_{p,r}(\R^d),$ then we have $\nabla u $ in $\B^{s-1}_{p,r}(\R^d)$ and there exist a position constant $C$ such that
  $$ C^{-1}\|u\|_{\B^s_{p,r}(\R^d)}\leq \|\nabla u\|_{\B^{s-1}_{p,r}(\R^d)}\leq C\|u\|_{\B^s_{p,r}(\R^d)}.$$
\end{proposition}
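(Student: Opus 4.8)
The plan is to reduce the claimed norm equivalence to the Bernstein inequalities applied blockwise, exploiting that each Littlewood--Paley piece $\dot\Delta_j u$ is spectrally supported in a dyadic annulus that avoids a fixed neighbourhood of the origin. First I would observe that $\dot\Delta_j$ is a Fourier multiplier, being convolution with $2^{jd}\wt h(2^j\cdot)$, so it commutes with differentiation: $\dot\Delta_j(\nabla u)=\nabla(\dot\Delta_j u)$ for every $j\in\Z$. Consequently, computing $\norm{\nabla u}_{\B^{s-1}_{p,r}}$ reduces to comparing $\norm{\nabla \dot\Delta_j u}_{L_p}$ with $\norm{\dot\Delta_j u}_{L_p}$, block by block.

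By the support properties of $\varphi$, the distribution $\dot\Delta_j u$ has Fourier transform supported in the annulus $\{\,\tfrac34\,2^j\le\abs{\xi}\le\tfrac83\,2^j\,\}$, of size $\lambda=2^j$. For functions spectrally localized in such an annulus, the Bernstein inequalities (see \cite{BCD}) furnish the two-sided estimate
$$C^{-1}2^{j}\norm{\dot\Delta_j u}_{L_p}\le \norm{\nabla \dot\Delta_j u}_{L_p}\le C\,2^{j}\norm{\dot\Delta_j u}_{L_p}.$$
Multiplying through by $2^{j(s-1)}$ turns the middle term into $2^{j(s-1)}\norm{\dot\Delta_j\nabla u}_{L_p}$ and the outer terms into $C^{\pm1}2^{js}\norm{\dot\Delta_j u}_{L_p}$; taking the $\ell_r(\Z)$ norm in $j$ and invoking the definition of the homogeneous Besov norm yields at once
$$C^{-1}\norm{u}_{\B^{s}_{p,r}}\le \norm{\nabla u}_{\B^{s-1}_{p,r}}\le C\norm{u}_{\B^{s}_{p,r}}.$$
For the vector $\nabla u$ one uses the paper's convention that $\norm{\nabla u}_{\B^{s-1}_{p,r}}=\sum_k\norm{\partial_k u}_{\B^{s-1}_{p,r}}$, together with the fact that $\norm{\nabla\dot\Delta_j u}_{L_p}$ is comparable, up to a dimensional constant, to $\sum_k\norm{\partial_k\dot\Delta_j u}_{L_p}$.

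There is essentially no serious obstacle: the statement is a routine consequence of frequency localization, and the only point demanding care is the \emph{lower} bound $2^{j}\norm{\dot\Delta_j u}_{L_p}\lesssim\norm{\nabla\dot\Delta_j u}_{L_p}$. Indeed, for a generic function one cannot control $u$ by $\nabla u$; it is precisely the fact that the spectrum of $\dot\Delta_j u$ is an annulus bounded away from $\xi=0$ (and not merely a ball) that makes the reverse Bernstein inequality applicable here. Since both directions of Bernstein's estimate are available off the shelf from \cite{BCD}, the whole argument is only a few lines.
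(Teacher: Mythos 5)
Your proposal is correct and follows exactly the route the paper intends: the paper gives no written proof beyond invoking \cite[Lemma 2.1]{BCD}, and your argument is precisely the standard blockwise application of the direct and reverse Bernstein inequalities on the dyadic annuli, using that $\dot\Delta_j$ commutes with $\nabla$. You also rightly flag that the lower bound is the only delicate point and that it is exactly the annulus (rather than ball) localization that makes the reverse inequality available.
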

The interpolation theory in  Besov spaces played  an important role in our  paper.  
Below are listed some results that we used (see details in \cite[Prop. 2.22]{BCD} or in 
 \cite[chapter 2.4.2]{HT}). 
\begin{proposition}[Interpolation]\label{interpolation}
A constant $C$ exists that  satisfies the following properties. If $s_{1}$ and $s_{2}$ are real numbers such that $s_{1}<s_{2}$
and $\theta\in ]0,1[,$ then we have, for any $(p,r)\in [1,\infty]^{2}$ and any tempered distribution $u$ satisfying \eqref{eq:lf}, 
$$\norm{u}_{\B^{\theta s_{1}+(1-\theta)s_{2}}_{p,r}(\R^{d})}\leq \norm{u}^{\theta}_{\B^{s_{1}}_{p,r}}\norm{u}^{1-\theta}_{\B^{s_{2}}_{p,r}(\R^{d})} $$
and, for some constant $C$ depending only on $\theta$ and $s_2-s_1,$ 
$$\norm{u}_{\B^{\theta s_{1}+(1-\theta)s_{2}}_{p,1}(\R^{d})}\leq
C\norm{u}^{\theta}_{\B^{s_{1}}_{p,\infty}(\R^{d})}\norm{u}^{1-\theta}_{\B^{s_{2}}_{p,\infty}(\R^{d})}. $$
Furthermore,  we have  for all $s\in(0,1)$ and $(p,q)\in[1,\infty]^2:$
$$\B^{s}_{p,q}(\R^d)=\bigl(L_{p}(\R^d);\W^{1}_{p}(\R^d)\bigr)_{s,q}.$$
\end{proposition}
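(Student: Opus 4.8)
The plan is to treat the three assertions in turn: the first two are elementary consequences of the dyadic definition of the Besov norm, while the third rests on the Littlewood--Paley characterization of the $K$-functional.

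\textbf{First inequality.} Setting $s=\theta s_1+(1-\theta)s_2$ and $c_j:=\norm{\dot\Delta_j u}_{L_p(\R^d)}$, I would factor the weight as $2^{js}=(2^{js_1})^{\theta}(2^{js_2})^{1-\theta}$, so that $2^{js}c_j=(2^{js_1}c_j)^{\theta}(2^{js_2}c_j)^{1-\theta}$, and then apply Hölder's inequality on $\ell_r(\Z)$ with the conjugate exponents $1/\theta$ and $1/(1-\theta)$. This yields the stated bound with constant $1$.

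\textbf{Second inequality.} Writing $M_i:=\norm{u}_{\B^{s_i}_{p,\infty}}=\sup_j 2^{js_i}c_j$, I would exploit the two pointwise estimates $c_j\le M_1 2^{-js_1}$ and $c_j\le M_2 2^{-js_2}$ and split the sum $\sum_j 2^{js}c_j$ at a threshold $J\in\Z$: the part $\sum_{j\le J}$ is controlled by $M_1$ times a convergent geometric series (using $s>s_1$) and the part $\sum_{j>J}$ by $M_2$ times a convergent geometric series (using $s<s_2$). Choosing the free integer $J$ so that $2^{J(s_2-s_1)}\approx M_2/M_1$ balances the two contributions and, since $(s-s_1)/(s_2-s_1)=1-\theta$, produces the bound $C\,M_1^{\theta}M_2^{1-\theta}$; rounding $J$ to an integer costs only a factor depending on $\theta$ and $s_2-s_1$, which is precisely the advertised dependence of $C$.

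\textbf{Real-interpolation identity.} Here I would compare the Besov norm with the $K$-functional
$$K(t,f):=\inf_{f=f_0+f_1}\bigl(\norm{f_0}_{L_p}+t\norm{f_1}_{\W^1_p}\bigr),$$
whose $L_q(\R_+;dt/t)$-weighted norm defines $(L_p;\W^1_p)_{s,q}$. For the upper bound on $K$, I would split $f=(f-\dot S_N f)+\dot S_N f$ at frequency $2^N\approx t^{-1}$, estimating $\norm{f-\dot S_N f}_{L_p}\le\sum_{j\ge N}c_j$ and, via Bernstein's inequality, $t\norm{\dot S_N f}_{\W^1_p}\lesssim 2^{-N}\sum_{j<N}2^j c_j$. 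For the lower bound I would use that for \emph{any} admissible decomposition one has $\norm{\dot\Delta_j f}_{L_p}\lesssim\norm{f_0}_{L_p}+2^{-j}\norm{f_1}_{\W^1_p}$, whence $c_j\lesssim K(2^{-j},f)$. Discretizing the weighted integral into dyadic pieces $t\approx 2^{-j}$ then reduces both directions to two discrete Hardy (Young convolution) inequalities for $(2^{js}c_j)_j$, namely $\norm{(2^{Ns}\sum_{j\ge N}c_j)_N}_{\ell_q}\lesssim\norm{(2^{js}c_j)_j}_{\ell_q}$ and $\norm{(2^{N(s-1)}\sum_{j<N}2^j c_j)_N}_{\ell_q}\lesssim\norm{(2^{js}c_j)_j}_{\ell_q}$.

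\textbf{Main obstacle.} The only genuine point is this last step: the two Hardy inequalities hold precisely because $s>0$ (for the tail sum) and $s<1$ (for the head sum), which is exactly why the identity is restricted to $s\in(0,1)$; outside this range the relevant geometric series diverge and the $K$-functional ceases to reproduce the Besov norm. Everything else is bookkeeping once the Littlewood--Paley estimates and Bernstein's inequality (Proposition~\ref{dervieq}) are in hand.
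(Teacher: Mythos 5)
The paper does not prove this proposition but cites it from \cite[Prop.~2.22]{BCD} and \cite[Ch.~2.4.2]{HT}, and your argument is exactly the standard proof given there: H\"older's inequality on the dyadic weights $2^{js}=(2^{js_1})^{\theta}(2^{js_2})^{1-\theta}$ for the first estimate, the split-and-optimize-at-a-threshold $J$ for the second (your computation $(s-s_1)/(s_2-s_1)=1-\theta$ and the resulting constant depending only on $\theta$ and $s_2-s_1$ are correct), and the $K$-functional characterization with the two discrete Hardy inequalities, valid precisely for $0<s<1$, for the real-interpolation identity. The proposal is correct and matches the referenced proofs.
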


The following proposition 
has been used several times, see e.g. \cite{DW}.  
\begin{proposition}\label{prop:for existence}
Let $1\leq q<\infty,$ $1\leq p<r\leq\infty$ and $\theta\in(0,1)$ such that
\begin{equation}\label{eq:q}
 \frac{1}{r}+\frac{1}{d}-\frac{2\theta}{dq}=\frac{1}{p}\cdotp
\end{equation}
 Then, there exists $C$ so that the following inequality holds true
 $$\norm{\nabla u}_{L_r(\R^d)}\leq C\norm{\nabla^2 u}^{\theta}_{L_p(\R^d)}\norm{u}^{1-\theta}_{\B^{2-2/q}_{p,\infty}(\R^d)}\cdotp$$
 \end{proposition}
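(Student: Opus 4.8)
The plan is to argue by Littlewood--Paley decomposition and to optimize a single frequency threshold, in the spirit of the classical proof of Gagliardo--Nirenberg inequalities in Besov spaces. Writing $\nabla u=\sum_{j\in\Z}\dot{\Delta}_j\nabla u$ and using the triangle inequality together with the Bernstein inequality (recall Proposition \ref{dervieq} and \cite[Lemma 2.1]{BCD}), which simultaneously supplies the gain in the embedding $L_p\hookrightarrow L_r$ on a single dyadic block and lets one commute $\dot{\Delta}_j$ with $\nabla$, I would first record the elementary bound
\begin{equation*}
\norm{\dot{\Delta}_j\nabla u}_{L_r}\lesssim 2^{j\gamma}\norm{\dot{\Delta}_j u}_{L_p},
\qquad \gamma:=1+d\Bigl(\tfrac1p-\tfrac1r\Bigr),
\end{equation*}
valid uniformly in $j$ and covering the endpoint $r=\infty$, since each $\dot{\Delta}_j$ is bounded on $L_p$ with norm independent of $j$.

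Next I would exploit the two data norms to bound $\norm{\dot{\Delta}_j u}_{L_p}$ in two complementary frequency regimes. On the one hand, the reverse Bernstein inequality on the annulus gives $2^{2j}\norm{\dot{\Delta}_j u}_{L_p}\lesssim\norm{\dot{\Delta}_j\nabla^2 u}_{L_p}\lesssim\norm{\nabla^2 u}_{L_p}$, whence $\norm{\dot{\Delta}_j u}_{L_p}\lesssim 2^{-2j}\norm{\nabla^2 u}_{L_p}$; this is efficient at high frequencies. On the other hand, by the very definition of the homogeneous Besov seminorm, $\norm{\dot{\Delta}_j u}_{L_p}\le 2^{-j(2-2/q)}\norm{u}_{\B^{2-2/q}_{p,\infty}}$, which is efficient at low frequencies. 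Setting $A:=\norm{\nabla^2 u}_{L_p}$ and $B:=\norm{u}_{\B^{2-2/q}_{p,\infty}}$ and cutting the sum at an integer $N$ to be chosen, I would obtain
\begin{equation*}
\norm{\nabla u}_{L_r}\lesssim A\sum_{j\ge N}2^{j(\gamma-2)}+B\sum_{j<N}2^{j(\gamma-2+2/q)}.
\end{equation*}
The index relation \eqref{eq:q} is exactly what forces $\gamma-2<0$ and $\gamma-2+2/q>0$, so that both geometric series converge and are dominated by their endpoint terms, giving $\norm{\nabla u}_{L_r}\lesssim A\,2^{N(\gamma-2)}+B\,2^{N(\gamma-2+2/q)}$.

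Finally I would optimize in $N$. Balancing the two contributions amounts to the choice $2^{N}\sim(A/B)^{q/2}$, and substituting this back produces $\norm{\nabla u}_{L_r}\lesssim A^{\theta}B^{1-\theta}$, the split $\theta/(1-\theta)$ being precisely the one dictated by \eqref{eq:q}; rounding $N$ to the nearest integer costs only a harmless constant. The delicate points—and hence what I expect to be the main obstacle—are the bookkeeping that converts \eqref{eq:q} into the two sign conditions on the series exponents and then into the correct power split in the final product, together with a clean treatment of the endpoint $r=\infty$ and of the uniform $L_p$-boundedness of the blocks $\dot{\Delta}_j$. As a consistency check for the algebra I would verify that all three quantities scale homogeneously under $u\mapsto u(\lambda\,\cdot)$, which pins down the admissible relation between the indices and confirms that the optimization indeed reproduces the exponents $\theta$ and $1-\theta$.
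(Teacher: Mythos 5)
Your strategy is the standard one and, as far as one can tell, the same as in the reference the paper relies on: the paper itself omits the proof of Proposition \ref{prop:for existence} and defers to \cite{DW}, where the inequality is obtained exactly by a Littlewood--Paley decomposition, Bernstein's inequalities in two complementary frequency regimes, and optimization of a dyadic cut-off. All the individual steps you list are sound: the block estimate $\norm{\dot{\Delta}_j\nabla u}_{L_r}\lesssim 2^{j\gamma}\norm{\dot{\Delta}_j u}_{L_p}$ with $\gamma=1+d(1/p-1/r)$ (valid up to $r=\infty$ since $p\le r$), the high-frequency bound via the reverse Bernstein inequality on the annulus together with the uniform $L_p$-boundedness of $\dot{\Delta}_j$, the low-frequency bound straight from the definition of $\B^{2-2/q}_{p,\infty}$, and the two sign conditions $\gamma-2<0$, $\gamma-2+2/q>0$, which indeed follow from \eqref{eq:q} because $0<\theta<1$ and $q<\infty$.

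The one place you should be more careful is the final bookkeeping, and the scaling check you propose at the end would actually have caught it. With \eqref{eq:q} as printed one gets $d(1/p-1/r)=1-2\theta/q$, hence $\gamma-2=-2\theta/q$ and $\gamma-2+2/q=2(1-\theta)/q$; balancing $2^{N(\gamma-2)}A$ against $2^{N(\gamma-2+2/q)}B$ at $2^{N}=(A/B)^{q/2}$ yields $A^{1-\theta}B^{\theta}$, that is $\norm{\nabla u}_{L_r}\lesssim\norm{\nabla^2u}_{L_p}^{1-\theta}\norm{u}_{\B^{2-2/q}_{p,\infty}}^{\theta}$, with the exponents interchanged relative to the stated conclusion. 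Equivalently, homogeneity under $u\mapsto u(\lambda\,\cdot)$ forces $1-d/r=\theta(2-d/p)+(1-\theta)(2-2/q-d/p)$, i.e. $\frac1p=\frac1r+\frac1d-\frac{2(1-\theta)}{dq}$, which is not \eqref{eq:q}. So the statement as printed carries a $\theta\leftrightarrow(1-\theta)$ misprint (harmless downstream, since the paper only ever invokes the proposition with $\theta=1/2$, where the two relations coincide). Your argument is essentially correct, but you must either prove the conclusion with exponents $1-\theta$ and $\theta$ under \eqref{eq:q}, or replace $2\theta/(dq)$ by $2(1-\theta)/(dq)$ in \eqref{eq:q}; asserting that the optimization ``reproduces the exponents $\theta$ and $1-\theta$'' exactly as stated is the one step that does not go through literally.
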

 
 The following results played a key role in  Sections \ref{section2} which is an easy 
 adaptation of \cite[Prop. 2.1]{DM2} .  Note that estimates in the same spirit \emph{but for source
 terms valued in Besov spaces} have been proved 
 by H. Kozono and S.~Shimizu in \cite{KS}. 
\begin{proposition}\label{propregularity}
Let $1<p,q< \infty$ and $1\leq r\leq \infty.$ Then, for any $u_{0}\in \B^{2-2/q}_{p,r}(\R^{d})$ with $\div u_0=0,$ and any $f\in L_{q,r}(\R_+;L_{p}(\R^{d})),$ the  Stokes system
\eqref{eq:stokes}
has a unique solution $(u,\nabla P)$ with $\nabla P\in L_{q,r}(\R_+;L_p(\R^d))$
and\footnote{Only weak continuity holds if $r=\infty.$}  
$u$ in the space $\W^{2,1}_{p,(q,r)}(\R_+\times \R^{d})$ defined by
$$\bigl\{u\in \mathcal{C}(\R_+;\B^{2-2/q}_{p,r}( \R^{d})):u_{t}, \nabla^{2}u\in L_{q,r}(\R_+;L_{p}( \R^{d})) \bigr\}\cdotp$$
Furthermore, there exists a constant
$C$ such that
\begin{multline}\label{eq:maxreg1}
\mu^{1-1/q}\norm{u}_{L_{\infty}(\R_+;\B^{2-2/q}_{p,r}(\R^{d}))}+\norm{u_{t}, \mu\nabla^{2}u,\nabla P}_{L_{q,r}(\R_+;L_{p}(\R^{d}))}
\\
\leq C\bigl(\mu^{1-1/q}\norm{u_{0}}_{\B^{2-2/q}_{p,r}(\R^{d})}+\norm{f}_{L_{q,r}(\R_+;L_{p}(\R^{d}))}\bigr)\cdotp\end{multline}
Let $\wt s>q$ be such that 
$$\frac1q-\frac1{\wt s}\leq \frac12\andf \frac d{2p}+\frac1q-\frac1{\wt s}>\frac12,$$
 and define $\wt m\geq p$ by the relation
$$\frac{d}{2\wt m}+\frac{1}{\wt s}=\frac{d}{2p}+\frac{1}{q}-\frac{1}{2}\cdotp$$
Then, the following inequality holds true:  
\begin{multline}\label{eq:maxreg2}\mu^{1+\frac{1}{\wt s}-\frac{1}{q}}\norm{\nabla u}_{L_{\wt s,r}(\R_+;L_{\wt m}(\R^{d}))}\\\leq C(\mu^{1-1/q}\norm{u}_{L_{\infty}(\R_+;\B^{2-2/q}_{p,r}(\R^{d}))}+\norm{u_{t}, \mu\nabla^{2}u}_{L_{q,r}(\R_+;L_{p}(\R^{d}))}).
\end{multline}
Finally,  if $2/q+d/p>2,$ then for all $s\in(q,\infty)$ and $m\in(p,\infty)$ such that 
$$\frac{d}{2m}+\frac{1}{s}=\frac{d}{2p}+\frac{1}{q}-1,$$ it holds that 
\begin{multline}\label{eq:maxreg3}\mu^{1+\frac{1}{s}-\frac{1}{q}}\norm{u}_{L_{s,r}(\R_+;L_{m}(\R^{d}))}\\\leq C\bigl(\mu^{1-1/q}\norm{u}_{L_{\infty}(0,T;\B^{2-2/q}_{p,r}(\R^{d}))}+\norm{u_{t}, \mu\nabla^{2}u}_{L_{q,r}(\R_+;L_{p}(\R^{d}))}\bigr)\cdotp
\end{multline}
\end{proposition}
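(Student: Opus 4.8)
The plan is to remove the pressure by means of the Helmholtz projectors $\p$ and $\q$ introduced in \eqref{eq:defpq}, exactly as in the proof of Proposition \ref{pro:mrforlame}, thereby reducing the Stokes system \eqref{eq:stokes} to a heat equation to which \cite[Prop. 2.1]{DM2} applies directly. First I would apply $\p={\rm Id}+\nabla(-\Delta)^{-1}\div$ to the momentum equation: since $\div u_0=0$ forces $\p u=u$, while $\p$ annihilates every gradient (so that $\p\nabla P=0$), the equation collapses to the free heat equation
\begin{equation*}
u_t-\mu\Delta u=\p f, \qquad u|_{t=0}=u_0,
\end{equation*}
with divergence-free data $u_0\in\B^{2-2/q}_{p,r}(\R^d)$ and source $\p f$.

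Then the part of \eqref{eq:maxreg1} bearing on $u$ alone (i.e. without the pressure term) is precisely \cite[Prop. 2.1]{DM2}, provided one knows that $\p$ maps $L_{q,r}(\R_+;L_p(\R^d))$ continuously into itself, so that $\norm{\p f}_{L_{q,r}(\R_+;L_p)}\lesssim\norm{f}_{L_{q,r}(\R_+;L_p)}$; likewise, the higher-integrability bounds \eqref{eq:maxreg2} and \eqref{eq:maxreg3}, which involve only $u$ and $\nabla u$, transfer verbatim from the heat-flow statement, being interpolation–embedding estimates valid for any function in the maximal-regularity class. To recover the pressure I would apply $\q=-\nabla(-\Delta)^{-1}\div$ to the same equation: using $\div u=0$ (hence $\q u=0$, $\q u_t=0$, and $\q\Delta u=\Delta\q u=0$) together with $\q\nabla P=\nabla P$, one obtains the explicit identity $\nabla P=\q f$, whence $\norm{\nabla P}_{L_{q,r}(\R_+;L_p)}\lesssim\norm{f}_{L_{q,r}(\R_+;L_p)}$ by the same continuity applied to $\q$. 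Uniqueness is then immediate: the difference of two solutions has vanishing data and right-hand side, so applying $\p$ shows $u\equiv0$ by uniqueness for the heat equation, after which $\nabla P=\q f=0$.

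The only point requiring care — and the analogue of the step already used for the Lam\'e system — is the continuity of $\p$ and $\q$ on the vector-valued Lorentz space $L_{q,r}(\R_+;L_p(\R^d))$. These operators are Fourier multipliers in the space variable only, with matrix symbols $\xi\mapsto{\rm Id}-\xi\otimes\xi/\abs{\xi}^2$ and $\xi\mapsto\xi\otimes\xi/\abs{\xi}^2$ that are homogeneous of degree $0$ and smooth away from the origin, hence satisfy the Mikhlin--H\"ormander condition and are bounded on $L_p(\R^d)$ for every $1<p<\infty$. Since they act pointwise in the time variable and commute with it, this spatial boundedness upgrades to boundedness on $L_{q,r}(\R_+;L_p)$ for all $1\leq r\leq\infty$, which is exactly what the argument above consumes. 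I expect this multiplier bound, together with quoting the precise form of \cite[Prop. 2.1]{DM2} in the Lorentz-in-time setting, to be the main (though essentially routine) obstacle.
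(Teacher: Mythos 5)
Your argument is correct and coincides with the paper's route: the paper omits the details (deferring to \cite{DW} and calling the result an easy adaptation of \cite[Prop.\ 2.1]{DM2}), but its proof of the companion Lam\'e statement, Proposition \ref{pro:mrforlame}, uses exactly your reduction via the Helmholtz projectors \eqref{eq:defpq} to a heat equation, with $\p u=u$, $\nabla P=\q f$, and the continuity of $\p,\q$ on $L_{q,r}(\R_+;L_p)$. Nothing further is needed.
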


\bigbreak


\bigbreak
\section{}
In order to check whether we can get the global existence for large initial data. In the following, we use standard regularity estimates to deal with the  system $(INSK).$



Here we establish the estimates for the density based on the regularity estimates of the transport equation and heat equation. They are listed in the following proposition:
\begin{proposition}\label{prop:r1}
    Assume that $u$ is divergence free and  that $(\rho, u)$ satisfies the mass equation $\rho_t+\div(\rho u)=0.$  Let the  initial data $\rho_0$ satisfy for some $1\leq p\leq \infty$:
\begin{equation*}
     \nabla\rho_0\in L_\infty,\quad
    \nabla\rho_0 \in L_p \andf \rho_0,\, 1/\rho_0 \in L_{\infty}.
\end{equation*}
Additionally,   suppose that  $\nabla u \in L_1(\R_+).$
Then, for all $t\in \R_+,$  we have  
$\rho(t)$ and $1/\rho(t)$ in $ L_{\infty},$ $\nabla \rho(t)$ in $L_p,$ with 
\begin{align}
    \label{eq:rlin}&\|\rho(t)-1\|_{L_\infty}= \|\rho_0-1\|_{L_\infty} \andf \|1/\rho(t)-1\|_{L_\infty}= \|1/\rho_0-1\|_{L_\infty},\\
    \label{eq:nrlpnew}&\|\nabla \rho(t)\|_{L_p}\leq\|\nabla \rho_0\|_{L_p} \exp{\biggl(\int_0^t \|\nabla u\|_{L_\infty}\,d\tau\biggr)}.
\end{align}
\end{proposition}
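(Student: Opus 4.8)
The plan is to exploit the fact that, since $u$ is divergence-free, the mass equation is a genuine transport equation for $\rho$, and then to propagate all the bounds along the flow of $u$. First I would use $\div u=0$ to rewrite $\rho_t+\div(\rho u)=0$ as the transport equation $\rho_t+u\cdot\nabla\rho=0$. Since $\nabla u\in L_1(\R_+;L_\infty)$, the field $u(t,\cdot)$ is Lipschitz with an integrable-in-time constant, so by Cauchy--Lipschitz the flow $X(t,y)$ solving $\frac{d}{dt}X(t,y)=u(t,X(t,y))$ with $X(0,y)=y$ is a globally defined bi-Lipschitz diffeomorphism of $\R^d$ for every $t$; moreover Liouville's formula together with $\div u=0$ gives $\det(D_yX)\equiv1$, so $X(t,\cdot)$ is measure-preserving. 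Along characteristics the transport equation reads $\frac{d}{dt}\rho(t,X(t,y))=0$, whence $\rho(t,X(t,y))=\rho_0(y)$.

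For the first claim, constants are transported as well, so $\rho-1$ solves the same equation and $(\rho(t)-1)(X(t,y))=\rho_0(y)-1$. Because $X(t,\cdot)$ is a measure-preserving bijection, $\rho(t)-1$ and $\rho_0-1$ are equimeasurable, hence $\|\rho(t)-1\|_{L_\infty}=\|\rho_0-1\|_{L_\infty}$. The same identity for $1/\rho-1$ follows once one observes that $1/\rho$ solves the identical transport equation (formally $(1/\rho)_t+u\cdot\nabla(1/\rho)=-\rho^{-2}(\rho_t+u\cdot\nabla\rho)=0$); the $L_\infty$ bound on $\rho-1$ keeps $\rho$ bounded away from $0$ and $\infty$, so $1/\rho$ is well defined and the equimeasurability argument applies verbatim, giving $\|1/\rho(t)-1\|_{L_\infty}=\|1/\rho_0-1\|_{L_\infty}$.

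For the gradient estimate I would differentiate the transport equation in space. Setting $w:=\nabla\rho$ and applying $\nabla$ to $\rho_t+u\cdot\nabla\rho=0$ produces $\partial_tw+u\cdot\nabla w=-(\nabla u)^{\!\top}w$, where the matrix $(\nabla u)^{\!\top}$ has operator norm bounded by $\|\nabla u\|_{L_\infty}$. Reading this along the flow, $W(t):=w(t,X(t,y))$ obeys the linear ODE $\frac{d}{dt}W=-(\nabla u)^{\!\top}(t,X(t,y))\,W$, so that $\frac{d}{dt}|W|^2\le2\|\nabla u(t)\|_{L_\infty}|W|^2$ and Gronwall's lemma yields $|W(t)|\le|W(0)|\exp\bigl(\int_0^t\|\nabla u\|_{L_\infty}\,d\tau\bigr)$. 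Finally, changing variables $x=X(t,y)$ and using $\det(D_yX)\equiv1$,
\[
\|\nabla\rho(t)\|_{L_p}^p=\int_{\R^d}|w(t,X(t,y))|^p\,dy\le\exp\Bigl(p\int_0^t\|\nabla u\|_{L_\infty}\,d\tau\Bigr)\int_{\R^d}|\nabla\rho_0(y)|^p\,dy,
\]
which is the announced bound \eqref{eq:nrlpnew} after taking $p$-th roots.

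The only genuinely delicate point is the rigorous justification of these manipulations at the stated regularity: differentiating the equation in $x$, and constructing the bi-Lipschitz measure-preserving flow from the mere assumption $\nabla u\in L_1(\R_+;L_\infty)$. Both are standard: one first carries out the computation for smooth $(\rho,u)$, where every step is legitimate, and then recovers the general case by a density/approximation argument, noting that the final estimates \eqref{eq:rlin}--\eqref{eq:nrlpnew} are stable under passage to the limit. No smallness of the data is needed here, since the argument is purely of transport type.
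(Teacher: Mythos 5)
Your proof is correct and follows essentially the same route as the paper: both exploit that $\div u=0$ turns the mass equation into a pure transport equation, so that $\rho-1$ and $1/\rho-1$ are transported (giving the $L_\infty$ identities), and both differentiate to obtain $(\nabla\rho)_t+u\cdot\nabla(\nabla\rho)=-\nabla u\cdot\nabla\rho$ and close with Gronwall. The only cosmetic difference is that you run Gronwall pointwise along the measure-preserving Lagrangian flow and then change variables, whereas the paper applies the standard $L_p$ a priori estimate for the transport equation directly and runs Gronwall on $\|\nabla\rho(t)\|_{L_p}$; the two are equivalent.
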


\begin{proof}
As the vector field $u$ is divergence-free, the mass equation implies
$$(1/\rho)_t+u\cdot \nabla (1/\rho)=0 \andf (\rho-1)_t+u\cdot \nabla (\rho-1)=0$$
which gives \eqref{eq:rlin}. 
\smallbreak
Next, after applying $\nabla$  to the mass equation, we find that $\nabla \rho$  satisfy the following transport equations with source terms:
\begin{equation}
     \label{eq:nr} (\nabla \rho)_t+u\cdot \nabla(\nabla \rho)=-\nabla u\cdot \nabla \rho.
\end{equation}

As the vector field $u$ is divergence free, we get for $t\in \R_+$
$$ \|\nabla \rho(t)\|_{L_p}\leq \|\nabla \rho_0\|_{L_p}+\int_0^t \|\nabla u\cdot \nabla \rho\|_{L_p}\,d\tau \leq \|\nabla \rho_0\|_{L_p}+\int_0^t \|\nabla u\|_{L_\infty}\| \nabla \rho\|_{L_p}\,d\tau
$$
which along with Gronwall inequality   gives
\begin{equation*}
    \|\nabla \rho(t)\|_{L_p}\leq \|\nabla \rho_0\|_{L_p}\exp{\biggl(\int_0^t \|\nabla u\|_{L_\infty}\,d\tau\biggr)}.
\end{equation*}
This completes the proof.

\end{proof}

\medbreak
\begin{proposition}\label{prop:r2}
    Let $(\rho, v)$  be solution of  \eqref{eq:rh} and \eqref{eq:nskeqvc1}, and the initial data
    $\sqrt{\rho_0}v_0\in L_2,$ $\rho_0\in H^1 $ and there exist positive constant $c$ depend on $\rho_*,$ $\rho^*,$ $\nu$ satisfy
    $$\|\sqrt{\rho_0}v_0\|_{ L_2}+\|\rho_0-1\|_{L_2}+\|\nabla \rho_0\|_{L_2}\leq c .$$
    Additionally, the density $\rho$ satisfy
    \begin{equation}
        \label{eq:assr}
        0< \rho_*\leq \rho \leq \rho^* \andf \underset{t\in \R_+}{\sup}\|\nabla  \rho(t)\|^2_{L_4}\leq \delta \ll 1.
    \end{equation}
    Then, we have $$v\in L_{\infty}(\R_+;L_2(\R^d)) \andf \nabla v  \in L_{2}(\R_+;L_2(\R^d)),$$
    $$\rho \in L_{\infty}(\R_+;H^1(\R^d)) \andf \nabla \rho  \in L_{2}(\R_+;H^1(\R^d)).$$

\end{proposition}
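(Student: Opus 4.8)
The plan is to run a coupled energy estimate on the parabolic system \eqref{eq:rh}--\eqref{eq:nskeqvc1}, treating $\rho$ as a genuinely diffusing quantity and $v$ as the velocity, and to close it by a continuation argument in which the smallness of the initial norms and of $\sup_t\|\nabla\rho\|_{L_4}$ (see \eqref{eq:assr}) lets every nonlinear term be absorbed into one of the two available dissipations: the viscous one $\|\nabla v\|_{L_2}^2$ and the parabolic one $\nu\|\nabla\rho\|_{H^1}^2$. Throughout I use the uniform bounds $\rho_*\le\rho\le\rho^*$ to treat $\mu(\rho)$, $k(\rho)$ and the weights $\rho$, $1/\rho$ as order-one coefficients.

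For the velocity I would test \eqref{eq:nskeqvc1} with $v$, using \eqref{eq:rh} to replace $\rho_t$ in the time-derivative term, which yields
\begin{equation*}
\frac{d}{dt}\,\frac12\int_{\R^d}\rho|v|^2\,dx+\frac12\int_{\R^d}\mu(\rho)|D(v)|^2\,dx=\frac{\nu}{2}\int_{\R^d}\Delta\rho\,|v|^2\,dx+\int_{\R^d}\phi\cdot v\,dx-\int_{\R^d}\nabla P\cdot v\,dx.
\end{equation*}
The pointwise identity $D(v):\nabla v=\tfrac12|D(v)|^2$ together with $\int|D(v)|^2=2\|\nabla v\|_{L_2}^2+2\|\div v\|_{L_2}^2$ and $\mu(\rho)\ge\mu_*>0$ furnishes the coercive lower bound $\tfrac12\int\mu(\rho)|D(v)|^2\gtrsim\mu_*\|\nabla v\|_{L_2}^2$. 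After one integration by parts the first right-hand term is bounded by $\|\nabla\rho\|_{L_4}\|v\|_{L_4}\|\nabla v\|_{L_2}$, hence carries the small factor $\sqrt\delta$; the capillary contribution $-\int\div(k(\rho)\nabla\rho\otimes\nabla\rho)\cdot v=\int k(\rho)\nabla\rho\otimes\nabla\rho:\nabla v$ is $O(\delta)\|\nabla v\|_{L_2}$ by \eqref{eq:assr}; and the remaining pieces of $\phi$ (see \eqref{eq:deff} and \eqref{eq:divmr}) reduce, after one integration by parts, to products of $\nabla\rho$ and $\nabla^2\rho$ against $\nabla v$, which I would split by Young's inequality between the two dissipations.

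For the density I would test \eqref{eq:rh} with $\rho-1$ and, after applying $\nabla$, with $\nabla\rho$, the second test giving
\begin{equation*}
\frac12\frac{d}{dt}\|\nabla\rho\|_{L_2}^2+\nu\|\nabla^2\rho\|_{L_2}^2=\int_{\R^d}\div(\rho v)\,\Delta\rho\,dx,
\end{equation*}
and I would bound the right-hand side through the splitting $\div(\rho v)=\rho\,\div v+v\cdot\nabla\rho$ by $\rho^*\|\nabla v\|_{L_2}\|\nabla^2\rho\|_{L_2}+\sqrt\delta\,\|v\|_{L_4}\|\nabla^2\rho\|_{L_2}$. Absorbing a fraction of $\nu\|\nabla^2\rho\|_{L_2}^2$ leaves $\|\nabla v\|_{L_2}^2$ (integrable in time via the viscous dissipation) and, using Gagliardo--Nirenberg $\|v\|_{L_4}^2\lesssim\|v\|_{L_2}^{2-d/2}\|\nabla v\|_{L_2}^{d/2}$ for $d=2,3$ together with $\|v\|_{L_\infty(L_2)}\lesssim c$, a contribution controlled by $c^2\|\nabla v\|_{L_2}^2$, again integrable. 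Summing the velocity and density identities and invoking the smallness of $c$ and $\delta$ then yields a differential inequality of the form $\tfrac{d}{dt}\mathcal E+c_0\mathcal D\le(\text{small})\,\mathcal D$ for the total energy $\mathcal E=\|\sqrt\rho\,v\|_{L_2}^2+\|\nabla\rho\|_{L_2}^2$ and dissipation $\mathcal D=\|\nabla v\|_{L_2}^2+\nu\|\nabla\rho\|_{H^1}^2$, from which the stated global bounds $v\in L_\infty(L_2)$, $\nabla v\in L_2(L_2)$, $\nabla\rho\in L_\infty(L_2)\cap L_2(\dot H^1)$ follow by a standard continuation argument; the conservation $\|\rho(t)-1\|_{L_p}=\|\rho_0-1\|_{L_p}$ coming from the underlying incompressible transport (as in Proposition \ref{prop:r1}) supplies the remaining part $\rho-1\in L_\infty(L_2)$.

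The main obstacle will be the simultaneous control of the two genuinely top-order couplings: the third-order-in-$\rho$ term $2\nu\div(\mu(\rho)\nabla(\nabla\log\rho))$ hidden in $\phi$, which pairs $\nabla^2\rho$ with $\nabla v$, and the pressure term $\int\nabla P\cdot v$. For the latter I would use $\div u=0$ to reduce it to $\nu\int\nabla P\cdot\nabla\log\rho$ and then bound $\nabla P$ through the Helmholtz projection of \eqref{eq:nskeqvc1}, an elliptic estimate; the essential point is that this term, like the top-order part of $\phi$, must be distributed between the viscous and the parabolic dissipations, and making both halves fit under the respective coercive constants simultaneously is exactly where the smallness of $\|\nabla\rho\|_{L_4}$ and of the initial data in \eqref{eq:assr}, and the balance between $\nu$, $\mu_*$ and the dissipation constants, are needed.
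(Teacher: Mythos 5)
Your overall skeleton (coupled $L_2$ energy estimates for $v$ and for $a=\rho-1$, $\nabla\rho$, closed by smallness and a continuation argument) matches the paper's, but the two terms you yourself flag as ``the main obstacle'' are treated in a way that does not close, and the paper's proof hinges on resolving exactly these two points differently. First, the top-order capillary--viscous coupling: after integration by parts the last term of $\phi$ tested against $v$ is $-2\nu\int\mu(\rho)\nabla(\nabla\log\rho):\nabla v\,dx$, whose coefficient is $O(\nu\mu)$, \emph{not} small. Splitting it by Young between the viscous dissipation $\mu_*\|\nabla v\|_{L_2}^2$ and the parabolic one $\nu\|\nabla^2\rho\|_{L_2}^2$ produces a contribution of order $\nu^2(\mu^*)^2\mu_*^{-1}\|\nabla(\nabla\log\rho)\|_{L_2}^2$, which can only be absorbed by $\nu\|\nabla^2\rho\|_{L_2}^2$ under an unstated structural restriction on $\nu$ relative to $\mu$ and $\rho_*$; no smallness of the data or of $\sup_t\|\nabla\rho\|_{L_4}$ helps here. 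The paper avoids this entirely via the algebraic identity $\nu\nabla(\nabla\log\rho)=\nabla(v-u)$, which converts the term into $-2\|\sqrt{\mu(\rho)}\nabla v\|_{L_2}^2+2\int\mu(\rho)\nabla u:\nabla v\,dx$: a \emph{good} sign plus a term controlled by the basic energy balance \eqref{eq:enes} (which supplies $\nabla u\in L_2(\R_+;L_2)$ and must be invoked — your write-up never uses it).

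Second, the pressure. Bounding $\nabla P$ ``through the Helmholtz projection of \eqref{eq:nskeqvc1}'' requires controlling $\rho\,\d_t v$ in $L_2$, which is not available at the level of this energy estimate. The paper instead writes $-\int\nabla P\cdot v\,dx=\nu\int P\,\Delta\log\rho\,dx$ and evaluates this \emph{exactly} by testing the original momentum equation of $(INSK)$ with $\nu\nabla\log\rho$: the time-derivative contribution $B_1=\nu\int\d_t u\cdot\nabla\rho\,dx$ vanishes because $\div\d_t u=0$, and the viscous contribution $B_3$ produces the additional dissipation $-2\nu^2\int\mu(\rho)\abs{\nabla(\nabla\log\rho)}^2dx$ on the left-hand side, which is precisely what is then used to absorb the remaining couplings $A_3,A_4,A_5$ between $D(v)$, $\nabla\rho$ and $\nabla(\nabla\log\rho)$. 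Without this extra dissipation in $\nabla(\nabla\log\rho)$ your scheme has nothing of the right strength to put those terms against. These are not presentational differences: as written, your energy inequality does not close, and the missing ingredients are the substitution $v-u=\nu\nabla\log\rho$ at top order and the exact computation of the pressure term through the momentum equation.
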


\begin{proof}

Before showing the proof,  by using  the interpolation inequality:
\begin{equation}
    \label{eq:gnl4}\|z\|_{L_4}\lesssim \|z\|^{\theta}_{L_2}\|\nabla z\|^{1-\theta}_{L_2} \with \theta=1-\frac d4
\end{equation}
and Young inequality, we  present the following inequalities which will be  frequently used:
\begin{equation}
    \label{eq:usein}
    \begin{aligned}
    \|\nabla \rho\|^2_{L_4}\|v\|^2_{L_4}&\leq \|\nabla \rho\|^2_{L_4}\|v\|^{2\theta}_{L_2}\|\nabla v\|^{2(1-\theta)}_{L_2}\\
    &\leq \|\nabla \rho\|^{2\theta}_{L_4}\|v\|^{2\theta}_{L_2}\|\nabla v\|^{2(1-\theta)}_{L_2}\|\nabla \rho\|^{2(1-\theta)}_{L_4}\\
    &\leq C_{\rho_*} (\|\nabla \rho\|^2_{L_4}\| \sqrt{\mu(\rho)}\nabla v\|^2_{L_2}+\|\sqrt{\rho}v\|^2_{L_2}\|\nabla \rho\|^2_{L_4}).
\end{aligned}
\end{equation}
Now, in order to get the estimates of $\|\nabla \rho\|_{L_2(L_2)}$ and forbid the initial data $\|\rho_0\|_{L_2}$ small, 
we consider $a:=\rho-1$ instead of $\rho$ in the $L_2$ estimates. From \eqref{eq:rh}, the equation of $a$ reads as:
$$a_t-\nu\Delta a+\div((a+1)v)=0.$$
Multiplying with $a$ and integrating on $\R^d$ yields:
$$\frac 12\frac{d}{dt} \int_{\R^d} \abs{a}^2\,dx+\nu \int_{\R^d} \abs{\nabla a}^2\,dx=\int_{\R^d} (a+1) v\cdot \nabla a\,dx\leq \|\nabla a\|_{L_2}\|\sqrt{\rho}v\|_{L_2} \sqrt{\rho^*}.$$
On the other hand, testing \eqref{eq:rh} with $-\Delta \rho,$   we get
\begin{multline*}
     \frac 12\frac{d}{dt} \int_{\R^d} \abs{\nabla \rho}^2\,dx+\nu \int_{\R^d} \abs{\Delta  \rho}^2\,dx=\int_{\R^d} \div(\rho v)\cdot \Delta \rho\,dx\\
   \leq  \|\Delta \rho\|_{L_2}(\|\nabla \rho\|_{L_4} \|v\|_{L_4}+C_{\rho^*}\|\nabla v\|_{L_2})
\end{multline*}
By using Young inequality and note that $\nabla a=\nabla \rho$  we have
\begin{equation}
    \label{eq:esyt}
    \begin{aligned}
     \frac{d}{dt} \nu \int_{\R^d} \abs{ a}^2\,dx+\nu^2 \int_{\R^d} \abs{ \nabla\rho}^2\,dx &\leq \rho^*\|\sqrt{\rho}v\|^2_{L_2},\\
   \frac{d}{dt} \nu \int_{\R^d} \abs{\nabla \rho}^2\,dx+\nu^2 \int_{\R^d} \abs{\Delta  \rho}^2\,dx&\leq  \|\nabla \rho\|^2_{L_4} \|v\|^2_{L_4}+C_{\rho^*,\rho_*}  \|\sqrt{\mu(\rho)}\nabla v\|^2_{L_2}.
 \end{aligned}
\end{equation}

Multiplying \eqref{eq:nskeqvc1} with $v$ and integrating on $\R^d$  gives
\begin{equation}
    \label{eq:1}
    \frac 12\frac{d}{dt} \int_{\R^d} \rho \abs{v}^2\,dx+\frac{1}{2}\int_{\R^d} \mu(\rho)\abs{D(v)}^2\,dx=-\nu \int_{\R^d} \nabla \rho\cdot v\cdot \nabla v\,dx-\int_{\R^d} \nabla P\cdot v\,dx+\int_{\R^d} \phi\cdot v\,dx.
\end{equation}
Firstly, we present the term of pressure. Recalling that the definition of $v$ and $\div =0,$ we find that
$$-\int  \nabla P \cdot v\,dx=\int P\cdot \div v\,dx=\nu \int P\cdot \Delta \log \rho\,dx.$$
Therefore, we test the momentum equation of $(INSK)$
with  $\nu \nabla \log \rho$ to get
\begin{multline}
    \nu \int P \cdot\Delta  \log \rho\,dx=\nu\int \rho \d_t u\cdot \nabla \log \rho+\nu\int \rho u\cdot  \nabla u \cdot \nabla \log \rho\\
    -\nu\int \div(\mu(\rho)D(u))\cdot \nabla \log \rho\,dx+\nu\int \div(k(\rho)\nabla \rho\otimes \nabla \rho)\cdot \nabla \log \rho\,dx:=B_{i=1}^4.
\end{multline}
As $\div u=0,$ we have $B_1=0.$ Remembering that $u=v-\nu \nabla \log \rho$ and  the fact $$D(\nabla \log \rho)=2\nabla(\nabla \log \rho)$$ we get
\begin{align*}
    B_3&=-\nu\int \div(\mu(\rho)D(u))\cdot \nabla \log \rho\,dx\\
    &=\nu\int \mu(\rho)D(u) \cdot \nabla(\nabla \log \rho)\,dx\\
    &=\nu\int \mu(\rho)D(v) \cdot \nabla(\nabla \log \rho)\,dx-2\nu^2 \int\mu(\rho)\abs{\nabla(\nabla \log \rho)}^2 \,dx
\end{align*}
and
\begin{align*}
    B_2&=\nu\int u\cdot \nabla u\cdot \nabla \rho\,dx,\\
    &=\nu\int v\cdot \nabla v \cdot \nabla \rho\,dx-\nu^2\int v\cdot \nabla (\nabla \log \rho)\cdot \nabla \rho\,dx\\
    &\qquad\qquad-\nu^2 \int \nabla \log \rho\cdot \nabla v\cdot \nabla \rho\,dx+\nu^3 \int \nabla \log \rho\cdot \nabla(\nabla \log \rho)\cdot \nabla \rho\,dx.
\end{align*}
Moreover, for the last  term of $\phi$, we have
\begin{align*}
     \int 2\nu \div(\mu(\rho)\nabla(\nabla \log \rho))\cdot v\,dx&=-\int 2\nu \mu(\rho)\nabla(\nabla \log \rho)\cdot \nabla v\,dx\\
     &=-\int 2 \mu(\rho)\nabla(v-u)\cdot \nabla v\,dx\\
     &=\int 2 \mu(\rho)\nabla u\cdot \nabla v\,dx-2\|\sqrt{\mu(\rho)}\nabla v\|_{L_2}^2.
\end{align*}

Inserting the above results into \eqref{eq:1} and recalling the definition of $\phi,$ we obtain
\begin{small}
\begin{multline}
      \label{eq:2}
    \frac 12\frac{d}{dt} \int \rho \abs{v}^2\,dx+\frac{1}{2}\int \mu(\rho)\abs{D(v)}^2\,dx+2\int \mu(\rho)\abs{\nabla v}^2\,dx+2\nu^2\int \mu(\rho)\abs{\nabla(\nabla \log \rho)}^2\,dx\\
    =-\nu \int \nabla \rho\cdot v\cdot \nabla v\,dx
    -\nu^2 \int \nabla \log \rho\cdot \nabla v\cdot \nabla \rho\,dx+\nu^3 \int \nabla \log \rho\cdot \nabla(\nabla \log \rho)\cdot \nabla \rho\,dx\\
    +\nu\int \mu(\rho)D(v) \cdot \nabla(\nabla \log \rho)\,dx+\nu\int \div(k(\rho)\nabla \rho\otimes \nabla \rho)\cdot \nabla \log \rho\,dx\\
    -\int \div(k(\rho)\nabla \rho\otimes \nabla \rho)\cdot v\,dx-\nu\int \nabla \rho\cdot Dv\cdot v\,dx\\
    +2\nu\int \nabla \rho\cdot \nabla v\cdot v\,dx+2 \int \mu(\rho) \nabla u\cdot \nabla v\,dx:=\sum_{i=1}^9 A_i.
\end{multline}
\end{small}

Next, we bound the right hand side term by term. For the first term, taking advantage of Young inequality and \eqref{eq:usein}, one has
\begin{equation}
    \label{eq:esa1}
    \begin{aligned}
        A_1&= -\nu \int \nabla \rho \cdot v\cdot \nabla v\,dx\\
        &\leq \nu \|\nabla v\|_{L_2}\|\nabla \rho\|_{L_4}\|v\|_{L_4}\\
        &\leq \ep \| \sqrt{\mu(\rho)}\nabla v\|^2_{L_2}+C_{\rho_*,\ep}\nu^2 \|\nabla \rho\|^2_{L_4}\|v\|^2_{L_4}\\
        &\leq  \ep \| \sqrt{\mu(\rho)}\nabla v\|^2_{L_2}+ C_{\rho_*,\ep} \nu^2(\|\nabla \rho\|^2_{L_4}\| \sqrt{\mu(\rho)}\nabla v\|^2_{L_2}+\|\sqrt{\rho}v\|^2_{L_2}\|\nabla \rho\|^2_{L_4})
    \end{aligned}
\end{equation}

According to \eqref{eq:gnl4} and \eqref{eq:usein}, we obtain
\begin{equation}\label{eq:esa678}
    \begin{aligned}
   A_6&=-\int k(\rho)\nabla \rho\otimes \nabla \rho\cdot \nabla v\,dx,\\
    &\leq C_{\rho_*,\rho^*} \|\nabla v\|_{L_2}\|\nabla \rho\|^2_{L_4}\\
    &\leq \ep \|\sqrt{\mu(\rho)}\nabla v\|^2_{L_2}+C_{\ep, \rho_*,\rho^*}\|\nabla \rho\|^4_{L_4},\\
    A_7+A_8&=\nu\int \nabla \rho \cdot (2\nabla v-Dv)\cdot v\,dx\\
    &\lesssim \nu (\|\nabla v\|_{L_2}+\|D v\|_{L_2})\|\nabla \rho\|_{L_4}\|v\|_{L_4}\\
    & \lesssim \ep (\| \sqrt{\mu(\rho)}\nabla v\|^2_{L_2}+\| \sqrt{\mu(\rho)}Dv\|^2_{L_2})+C_{\rho_*,\ep}\nu^2\|v\|^2_{L_4}\|\nabla \rho\|^2_{L_4}\\
    & \lesssim \ep (\| \sqrt{\mu(\rho)}\nabla v\|^2_{L_2}+\| \sqrt{\mu(\rho)}Dv\|^2_{L_2})+C_{\rho_*,\ep}\nu^2 (\|\nabla \rho\|^2_{L_4}\| \sqrt{\mu(\rho)}\nabla v\|^2_{L_2}+\|\sqrt{\rho}v\|^2_{L_2}\|\nabla \rho\|^2_{L_4})
\end{aligned}
\end{equation}

Next, from Young inequality, we obtain
\begin{equation}
    \label{eq:esa2}
    \begin{aligned}
    A_{2}& \leq  \nu^2 1/\rho_*\|\nabla v\|_{L_2}\|\nabla \rho\|^2_{L_4}\\
   & \leq \ep \| \sqrt{\mu(\rho)}\nabla v\|^2_{L_2}+C_{\rho_*,\ep}\nu^4\|\nabla \rho\|^4_{L_4},
\end{aligned}
\end{equation}
\begin{equation}\label{eq:esa3}
    \begin{aligned}
        A_3&\leq \nu^3 1/\rho_*\|\nabla (\nabla \log \rho)\|_{L_2}\|\nabla \rho\|^2_{L_4}\\
        &\leq \ep \nu^2 \|\sqrt{\mu(\rho)}\nabla(\nabla \log \rho)\|^2_{L_2}+C_{\rho_*,\ep}\nu^4\|\nabla \rho\|^4_{L_4},
    \end{aligned}
\end{equation}
\begin{equation}
    \label{eq:esa4}
    A_4\leq \nu^2 \|\sqrt{\mu(\rho)}\nabla(\nabla \log \rho)\|^2_{L_2}+\frac{1}{4}\|\sqrt{\mu(\rho)}D(v)\|^2_{L_2},
\end{equation}
\begin{equation}
    \label{eq:esa5}
    \begin{aligned}
    A_5&=-\nu\int k(\rho)\nabla \rho \otimes \nabla \rho\cdot \nabla(\nabla \log \rho)\,dx\\
    &\leq C_{\rho^*} \nu\|\nabla (\nabla \log \rho)\|_{L_2}\|\nabla \rho\|^2_{L_4}\\
    &\leq \ep \nu^2 \|\sqrt{\mu(\rho)}\nabla(\nabla \log \rho)\|^2_{L_2}+C_{\rho^*,\rho_*,\ep}\|\nabla \rho\|^4_{L_4},
    \end{aligned}
\end{equation}
For rest term $A_9,$ we use H\"older inequality and   Young inequality again to get
\begin{equation}
    \label{eq:esa9}
    A_{9}\leq \ep \|\sqrt{\mu(\rho)}\nabla v\|^2_{L_2}+C_{\ep} \|\sqrt{\mu(\rho)}\nabla u\|^2_{L_2}.
\end{equation}
 Finally, thanks to \eqref{eq:nrlpnew}, \eqref{eq:gnl4} and Young inequality, we drive
\begin{align}
    \label{eq:esl2l4}
  &  \|\nabla \rho\|^{2}_{L_4}\lesssim  \|\nabla \rho\|^{2\theta}_{L_2}\|\nabla^2 \rho\|^{2(1-\theta)}_{L_2}\lesssim \|\nabla \rho\|^{2}_{L_2}+  \|\nabla^2 \rho\|^{2}_{L_2}\\
   \label{eq:esl4l4}
         &\|\nabla \rho\|^4_{L_4}\lesssim \|\nabla \rho\|^2_{L_4} \|\nabla \rho\|^{2}_{L_4}\lesssim \|\nabla \rho\|^2_{L_4}( \|\nabla \rho\|^{2}_{L_2}+ \|\nabla^2 \rho\|^{2}_{L_2})\cdotp
\end{align}
Putting \eqref{eq:esa1}, \eqref{eq:esa678},\eqref{eq:esa2},\eqref{eq:esa3},\eqref{eq:esa4},\eqref{eq:esa5},\eqref{eq:esa9},\eqref{eq:esl2l4},\eqref{eq:esl4l4}  and choosing small enough $\ep$, we deduce that
\begin{multline}
\label{eq:3}
     \frac 12\frac{d}{dt} \int \rho \abs{v}^2\,dx+\frac{1}{4}\int \mu(\rho)\abs{D(v)}^2\,dx+\frac 14\int \mu(\rho)\abs{\nabla v}^2\,dx+\frac 12\nu^2\int \mu(\rho)\abs{\nabla(\nabla \log \rho)}^2\,dx\\
     \lesssim C_{\rho_*,\ep} \nu^2\|\sqrt{\rho}v\|^2_{L_2}\|\nabla \rho\|^2_{L_4}
     +C_{\rho_*,\rho^*,\ep}(\nu^4+1)\|\nabla \rho\|^4_{L_4}+C_{\ep} \|\sqrt{\mu(\rho)}\nabla u\|^2_{L_2}\\
       \lesssim C_{\rho_*,\ep} \nu^2\|\sqrt{\rho}v\|^2_{L_2}(\|\nabla \rho\|^{2}_{L_2}+  \|\nabla^2 \rho\|^{2}_{L_2})
    +C_{\ep} \|\sqrt{\mu(\rho)}\nabla u\|^2_{L_2}\\
     +C_{\rho_*,\rho^*,\ep}(\nu^4+1)\|\nabla \rho\|^2_{L_4}( \|\nabla \rho\|^{2}_{L_2}+ \|\nabla^2 \rho\|^{2}_{L_2})
\end{multline}

Denote 
\begin{multline*}
    X(t):=\\
    \frac 12 \sup_{\tau\leq t} \int \rho(\tau) \abs{v(\tau)}^2\,dx+\frac 14\int_0^t\int \mu(\rho)\abs{\nabla v}^2\,dxd\tau+ \frac 12\nu^2\int_0^t\int \mu(\rho)\abs{\nabla(\nabla \log \rho)}^2\,dxd\tau
\end{multline*}
$$$$
$$Y(t):=\nu  \sup_{\tau\leq t}\int(\abs{a(\tau)}^2+\abs{\nabla\rho(\tau)}^2)\,dx+\nu^2 \int_0^t \int(\abs{\nabla\rho(\tau)}^2+\abs{\nabla^2\rho(\tau)}^2)\,dxd\tau$$

Integrating on $[0,t]$ of the time for \eqref{eq:esyt}, and \eqref{eq:3} we obtain
\begin{equation}
    \label{eq:xt}
    X(t)\leq C_{\rho_*,\ep} X(t)Y(t)+C_\ep \int_0^t   \|\sqrt{\mu(\rho)}\nabla u\|^2_{L_2}\,d\tau+C_{\rho_*,\rho^*,\ep}\frac{(\nu^4+1)}{\nu^2}\|\nabla \rho(t)\|^2_{L_4} Y(t)+X_0,
\end{equation}
and 
\begin{equation}
    \label{eq:yt}
    Y(t)\leq C_{\rho_*} X(t)Y(t)+ C_{\rho_*,\rho^*} X(t)(1+Y(t))+Y_0
\end{equation}
Multiplying \eqref{eq:yt} with $\frac{1}{2C_{\rho_*,\rho^*}}$ and plugging with \eqref{eq:xt}, if assuming that $\|\nabla \rho\|^2_{L_4}\ll 1 ,$ thus we see that
\begin{equation*}
    \frac 12 X(t)+\frac{1}{2C_{\rho_*,\rho^*}} Y(t)\lesssim C_{\rho_*,\rho^*} X(t)Y(t) +C_{\ep} \int_0^t   \|\sqrt{\mu(\rho)}\nabla u\|^2_{L_2}\,d\tau+X_0+Y_0
\end{equation*}
which along with \eqref{eq:enes}  implies for all $t\in \R_+$
$$X(t)+Y(t)\leq C(X_0+Y_0+(X(t)+Y(t))^2)$$
where $C$ only depend on $\rho^*,$ $\rho_*,$ and $\nu.$
Hence, if 
\begin{equation}
    \label{eq:assxy0}
    4C(X_0+Y_0)<1
\end{equation}
we have
\begin{equation}
    \label{eq:esxy}
    X(t)+Y(t)\leq 2(X_0+Y_0).
\end{equation}

\end{proof}
Having  the above estimates of density and $v$, now,  we come back to the estimates of $u.$
\begin{proposition}\label{prop0ad2} Let $1<p,q<2$ be such that $1/p+1/q=3/2,$  and $s$ and $m$ being the conjugate exponents of $q$ and
$p,$ respectively.
Let $(\rho,u)$ be a smooth solution of  $(INSK)$ on $\R_+\times\R^2$
 and initial  density satisfying
\begin{equation}\label{eq:smallarho1}\norm{\rho_0-1}_{L_{\infty}\cap L_{2} }, \norm{\rho_0}_{\H^1\cap \W^1_4}, \|1/\sqrt{\rho_0}\nabla \rho_0\|_{L_2},\|\sqrt{\rho_0}u_0\|_{L_2} \leq c\ll1 \andf  \nabla \rho_0 \in L_{m}.
\end{equation}
Additionally, assuming 
\begin{equation}
    \label{eq:inilip}
    \int^\infty_0 \|\nabla u\|_{L_\infty}\leq \frac{1}{2}.
\end{equation}
  Then, it holds that
\begin{equation}
    \label{eq:esnr}
    \nabla \rho\in L_{s,1}(\R_+; L_{m}) \andf \| \nabla \rho\|_{L_{s,1}(L_{m})}\leq C_0
\end{equation}
where $C_0$ only depends on the initial data.
For velocity, we have for all $1<p,q<2$ with $1/p+1/q=3/2,$  
\begin{multline*}
\norm{u}_{L_{\infty}(\R_+;
\B^{-1+2/p}_{p,1})}+\norm{u_{t}, \nabla^{2} u,\nabla P}_{L_{q,1}(\R_+;L_{p})}\\+\norm{u}_{L_{s,1}(\R_+;L_{m})}
\leq C (\norm{u_{0}}_{\B^{-1+2/p}_{p,1}}
+cC_0),\end{multline*}
for a constant $C.$ 
Furthermore, we have
  \begin{equation*}
    \norm{\t{u}}_{L_{q,1}(\R_+;L_{p})} \leq C (\norm{u_{0}}_{\B^{-1+2/p}_{p,1}}
+cC_0)
\end{equation*}
and 
 \begin{equation*}\norm{u}_{L_2(\R_+;L_{\infty})} \leq C (\norm{u_{0}}_{\B^{-1+2/p}_{p,1}}
+cC_0)\cdotp\end{equation*}
\end{proposition}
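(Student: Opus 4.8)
The plan is to run the argument in two stages, mirroring the structure of Proposition~\ref{prop0d2} but with the critical density bounds replaced by the $L_2$–based energy estimates of Proposition~\ref{prop:r2} together with the transport estimate of Proposition~\ref{prop:r1}. \emph{Step~1 (the density bound $C_0$).} Under the Lipschitz hypothesis \eqref{eq:inilip}, Proposition~\ref{prop:r1} applied with the exponent $m$ gives $\|\nabla\rho(t)\|_{L_m}\le\|\nabla\rho_0\|_{L_m}\,e^{1/2}$ for all $t$, so $\nabla\rho\in L_\infty(\R_+;L_m)$. Since the smallness condition \eqref{eq:smallarho1} places us in the regime of Proposition~\ref{prop:r2}, we also have $\nabla\rho\in L_\infty(\R_+;L_2)$ and $\nabla^2\rho\in L_2(\R_+;L_2)$. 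In two dimensions the Gagliardo--Nirenberg inequality $\|\nabla\rho\|_{L_m}\lesssim\|\nabla\rho\|_{L_2}^{2/m}\|\nabla^2\rho\|_{L_2}^{1-2/m}$, combined with the relation $\tfrac1s+\tfrac1m=\tfrac12$, yields after raising to the power $s$ and integrating in time the Lebesgue control $\|\nabla\rho\|_{L_s(\R_+;L_m)}\lesssim\|\nabla\rho\|_{L_\infty(L_2)}^{2/m}\|\nabla^2\rho\|_{L_2(L_2)}^{2/s}$; since the same inequality holds along the whole scaling line $\tfrac1a+\tfrac1b=\tfrac12$ (Proposition~\ref{prop:r2} makes it available for every $b\in[2,\infty)$), I would upgrade this to the sharp Lorentz bound $\nabla\rho\in L_{s,1}(\R_+;L_m)$ of \eqref{eq:esnr} by real interpolation with fine index $1$, with a constant $C_0$ depending only on $\|\nabla\rho_0\|_{L_m}$ and the energy data.

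\emph{Step~2 (the velocity estimate).} With $C_0$ in hand I would rewrite the momentum equation as the Stokes system \eqref{eq:uh} and apply the maximal regularity of Proposition~\ref{propregularity}, so that the left-hand side of the asserted inequality is dominated by $\|u_0\|_{\B^{-1+2/p}_{p,1}}$ plus the $L_{q,1}(L_p)$–norm of $-(\rho-1)\d_t u-\rho u\cdot\nabla u+g$. The term $(\rho-1)\d_t u$ is absorbed into the left-hand side using $\|\rho-1\|_{L_\infty}\le c$, and the viscosity perturbation inside $g$ is controlled exactly as in Proposition~\ref{prop0d2} by the smallness of $\|\rho-1\|_{L_\infty}$ and of $\|\nabla\rho\|_{L_\infty(L_2)}$, both $\lesssim c$ by Propositions~\ref{prop:r1}--\ref{prop:r2}. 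The genuinely new object is the capillary term $\div(k(\rho)\nabla\rho\otimes\nabla\rho)$: using the decomposition \eqref{eq:krho} I would bound each piece by pairing a suitable Lorentz norm of $\nabla\rho$, controlled by $C_0$ through Step~1, against the energy quantity $\|\nabla^2\rho\|_{L_2(\R_+\times\R^2)}$ furnished by Proposition~\ref{prop:r2}, precisely as indicated in the second Remark following Proposition~\ref{prop0d2}; this produces a contribution of size $cC_0$, which explains the shape $C(\|u_0\|_{\B^{-1+2/p}_{p,1}}+cC_0)$ of the final bounds.

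\emph{Step~3 (convection and the remaining bounds).} The convective term $\rho u\cdot\nabla u$ cannot be treated perturbatively, because $\|u\|_{L_\infty(\B^{-1+2/p}_{p,1})}$ need not be small in the large-data setting; instead I would close the velocity estimate by a Gronwall/continuation argument driven by the Lipschitz hypothesis \eqref{eq:inilip}, in the spirit of the continuation argument of Section~\ref{section3}, using the embedding \eqref{eq:emdld} in the form $\B^{-1+2/p}_{p,1}\hookrightarrow L_2$. Once the critical bound is obtained, the estimate for $\dot u$ follows from $\dot u=u_t+u\cdot\nabla u$ and Hölder's inequality, and the bound $\|u\|_{L_2(L_\infty)}$ follows from the Gagliardo--Nirenberg inequality $\|u\|_{L_\infty}\lesssim\|u\|_{\B^{-1+2/p}_{p,1}}^{1-q/2}\|\nabla^2 u\|_{L_p}^{q/2}$ together with the bounds just proved, exactly as in Proposition~\ref{prop0d2} and Corollary~\ref{coro1d2}. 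I expect the main obstacle to lie in Step~1, namely in reaching the sharp Lorentz fine index $1$ rather than merely $L_s(L_m)$: the homogeneous heat flow of data only in $L_2\cap L_m$ lands in the weaker space $L_{s,\infty}(L_m)$, so the gain of the index $1$ (and the correct exponent matching when this norm is later paired with the energy norm in the capillary term) must be extracted from the interpolation/energy structure rather than from the linear smoothing alone.
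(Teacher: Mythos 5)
Your overall architecture (transport $+$ energy estimates for the density, then Stokes maximal regularity for \eqref{eq:uh} with the capillary term paired against $\|\nabla^2\rho\|_{L_2(\R_+\times\R^2)}$) is the same as the paper's, and Step 2 is essentially correct. But there are two concrete problems.

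First, in Step 1 the interpolation you describe does not produce $L_{s,1}(\R_+;L_m)$. Interpolating along the scaling line $\tfrac1a+\tfrac1b=\tfrac12$ varies the \emph{spatial} exponent together with the time exponent, and real interpolation of $L_{a_1}(\R_+;L_{b_1})$ with $L_{a_2}(\R_+;L_{b_2})$ for $b_1\neq b_2$ does not return a space of the form $L_{s,1}(\R_+;L_m)$; moreover your direct Gagliardo--Nirenberg computation only lands in $L_s(\R_+;L_m)$, and interpolating that against $L_\infty(\R_+;L_m)$ with fine index $1$ yields $L_{\tilde s,1}$ only for $\tilde s>s$. The correct (and simpler) move, which is what the paper does, is to fix the spatial space $L_m$ and interpolate purely in time: the transport estimate of Proposition~\ref{prop:r1} under \eqref{eq:inilip} gives $\nabla\rho\in L_\infty(\R_+;L_m)$ with norm $\lesssim\|\nabla\rho_0\|_{L_m}$, Gagliardo--Nirenberg plus $\nabla\rho\in L_2(\R_+;H^1)$ from Proposition~\ref{prop:r2} gives $\nabla\rho\in L_2(\R_+;L_m)$ (note $s>2$ since $m>2$), and then $L_\infty(\R_+)\cap L_2(\R_+)\hookrightarrow\bigl(L_\infty(\R_+),L_2(\R_+)\bigr)_{2/s,1}=L_{s,1}(\R_+)$ applied with values in $L_m$ gives \eqref{eq:esnr}.

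Second, Step 3 misreads the hypotheses: \eqref{eq:smallarho1} \emph{does} include $\|\sqrt{\rho_0}u_0\|_{L_2}\le c$, so the energy balance \eqref{eq:enes} gives $\|u\|_{L_\infty(\R_+;L_2)}\lesssim c$, and in dimension $2$ the convective term is bounded by $\|u\cdot\nabla u\|_{L_{q,1}(\R_+;L_p)}\le\|u\|_{L_\infty(\R_+;L_2)}\|\nabla^2u\|_{L_{q,1}(\R_+;L_p)}\le Cc\,\|\nabla^2u\|_{L_{q,1}(\R_+;L_p)}$, hence absorbed by the left-hand side exactly as the other perturbative terms. (What this proposition removes is only the smallness of $\|u_0\|_{\B^{-1+2/p}_{p,1}}$, not of the $L_2$ energy.) Your proposed replacement --- a Gronwall/continuation argument driven by \eqref{eq:inilip} in the norm $L_{q,1}(\R_+;L_p)$ --- is not only unnecessary but problematic, since Gronwall-type iterations do not localize in the time-Lorentz quasi-norm $L_{q,1}$; the paper only uses a continuation argument in the reflexive spaces $L_r$ in time, in Section~\ref{section3}.
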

\begin{proof} 
Putting together the energy balance, \eqref{eq:smallarho1}, \eqref{eq:inilip}, Proposition \ref{prop:r1} and \ref{prop:r2}
 yields the following results of $\rho$ in $\R^d$:
\begin{itemize}
    \item $\underset{t\in \R_+}{\sup} \|\rho(t)-1\|_{L_\infty}\leq c, \underset{t\in \R_+}{\sup} \|\nabla \rho(t)\|_{L_4(\R^d)}\leq c.$
    \item $\nabla \rho \in L_\infty(\R_+; L_2(\R^2))\cap L_2(\R_+; H^1(\R^2)),$ $\nabla \rho \in L_\infty(\R_+; L_{m}(\R^2))$ and such that 
    \begin{align}
      \label{eq:nal2h1}  \|\nabla \rho \|_{L_\infty(\R_+; L_2(\R^2))\cap L_2(\R_+; H^1(\R^2))}\lesssim c\\
     \label{eq:lilm}   \|\nabla \rho\|_{L_\infty(\R_+; L_{m}(\R^2))}\lesssim \|\nabla \rho_0\|_{L_{m}(\R^2)}.
    \end{align}
   
\end{itemize}
Hence, as $m>2,$ the Gagliardo–Nirenberg inequality gives
\begin{equation*}
    \|\nabla \rho\|_{L_2(L_{m}(\R^d))}\lesssim \|\nabla \rho\|^{\theta}_{L_2(L_{2}(\R^2))} \|\nabla^2 \rho\|^{1-\theta}_{L_2(L_{2}(\R^2))}\with \theta=\frac{2}{m}. 
\end{equation*}
Thus, the interpolation $L_{s,1}(\R^2)=(L_\infty(\R^2),L_2(\R^2))_{2/s,1}$ together\eqref{eq:lilm} implies \eqref{eq:esnr}.

\medbreak

Looking at \eqref{eq:uh} as a Stokes   equation with source term,   Proposition \ref{propregularity} (taking $2-2/q=-1+2/p$) gives us
 \begin{multline}\label{esb2da}
     \norm{u}_{L_{\infty}(\R_+;
\B^{-1+2/p}_{p,1})}+\norm{u_{t}, \nabla^{2} u,\nabla P}_{L_{q,1}(\R_+;L_{p})}+\norm{u}_{L_{s,1}(\R_+;L_{m})}\\
\leq C\bigl(\norm{u_{0}}_{\B^{-1+2/p}_{p,1}}+\norm{(\rho-1)u_{t}+\rho u\cdot \nabla u}_{L_{q,1}(\R_+;L_{p})}+\norm{g}_{L_{q,1}(\R_+;L_{p})}\bigr)\cdotp
 \end{multline}
We then have by H\"older inequality,
$$\displaylines{\quad
\norm{(\rho-1)u_{t}+\rho u\cdot \nabla u}_{L_{q,1}(\R_+;L_{p})}\leq
\norm{\rho-1}_{L_{\infty}(\R_+\times\R^{2})}\norm{u_{t}}_{L_{q,1}(\R_+;L_{p})}\hfill\cr\hfill+\norm{\rho}_{L_{\infty}(\R_+\times\R^{2})}\norm{u\cdot \nabla u}_{L_{q,1}(\R_+;L_{p})}+\norm{g}_{L_{q,1}(\R_+;L_{p})}.\quad}
$$
As $c$ is small enough in \eqref{eq:smallarho1}, then the first part on the right-hand side can be absorbed by the left-hand side of \eqref{esb2da}.
 For  the nonlinear term, by H\"older inequality and embedding $$\W^1_p(\R^2)\hookrightarrow L_{p^*}(\R^2) \with \frac{1}{p^*}=\frac 1p-\frac 12,$$
  we  have
$$\norm{u\cdot \nabla u}_{L_{q,1}(\R_+;L_{p})}\leq \norm{u}_{L_{\infty}(\R_+;L_{2})}\norm{\nabla^2 u}_{L_{q,1}(\R_+;L_{p})}.$$
Then, the energy estimates \eqref{eq:enes} implies
$$\norm{u\cdot \nabla u}_{L_{q,1}(\R_+;L_{p})}\leq c\norm{\nabla^2 u}_{L_{q,1}(\R_+;L_{p})}.$$
Hence, the nonlinear term can also be absorbed by the left hand side of \eqref{esb2da}.

From \eqref{eq:muu}, \eqref{eq:smallarho1} and the energy estimates \eqref{eq:enes} drive
\begin{equation*}
    \|\div((\mu(\rho)-\mu(1))D(u))\|_{L_{q,1}(\R_+;L_{p})}\lesssim c\|\nabla^2 u\|_{L_{q,1}(\R_+;L_{p})}.
\end{equation*}
Recalling $c\ll 1$, then  $\|\div((\mu(\rho)-\mu(1))D(u))\|_{L_{q,1}(\R_+;L_{p})}$ can be absorbed by the left hand side.
Now, we begin to deal with the capillarity term.
Note that $1/p=1/2+1/m, 1/q=1/2+1/s,$ thus, in light of 
$$\|z\|^2_{L_{4}(\R_+\times\R^2)}\leq \|z\|_{L_{\infty}(\R_+;L_{2})}\|\nabla z\|_{L_{2}(\R_+;L_{2})}$$ and \eqref{eq:nal2h1}, one get
\begin{align*}
    \|\nabla \rho\cdot \nabla^2 \rho\|_{L_{q,1}(\R_+;L_{p})}\leq \|\nabla \rho\|_{L_{s,1}(\R_+;L_{m})}\|\nabla^2 \rho\|_{L_{2}(\R_+\times\R^2)} \leq c\|\nabla \rho\|_{L_{s,1}(\R_+;L_{m})};\\
     \|\Delta \rho\cdot \nabla \rho\|_{L_{q,1}(\R_+;L_{p})}\leq \|\nabla \rho\|_{L_{s,1}(\R_+;L_{m})}\|\nabla^2 \rho\|_{L_{2}(\R_+\times\R^2)} \leq c\|\nabla \rho\|_{L_{s,1}(\R_+;L_{m})};\\
      \||\nabla \rho|^2 \nabla \rho\|_{L_{q,1}(\R_+;L_{p})}\leq \|\nabla \rho\|_{L_{s,1}(\R_+;L_{m})}\|\nabla \rho\|^2_{L_{4}(\R_+\times\R^2)} \leq c\|\nabla \rho\|_{L_{s,1}(\R_+;L_{m})}.\\
\end{align*}
Whereas $k$ is smooth function and \eqref{eq:krho} leads to
$$\div(k(\rho)\nabla \rho \otimes \nabla \rho)\|_{L_{q,1}(\R_+;L_{p})}\lesssim c\|\nabla \rho\|_{L_{q_1,1}(\R_+;L_{p_1})}.$$

Putting all the estimates together, we conclude that
\begin{multline*}
\norm{u}_{L_{\infty}(\R_+;
\B^{-1+2/p}_{p,1})}+\norm{u_{t},\nabla^{2} u,\nabla P}_{L_{q,1}(\R_+;L_{p})}\\+\norm{u}_{L_{s,1}(\R_+;L_{m})}
\leq C (\norm{u_{0}}_{\B^{-1+2/p}_{p,1}}+c\|\nabla \rho\|_{L_{s,1}(\R^2;L_{m})}).\end{multline*}
Finally, by similar statement of \eqref{eq:dotu2} and \eqref{eq:uLinfty} completes the proof of the proposition.\end{proof}
\begin{remark}
   Owing to Proposition \ref{prop:r1} and \ref{prop:r2}, we can obtain the Proposition \ref{prop0ad2} which improve Proposition \ref{prop0d2} in the two dimension case by moving the smallness assumption of $\|u_0\|_{\B^{-1+2/p}_{p,1}(\R^2)}.$ 
\end{remark}
\appendix

\begin{small}	 

\end{small}
\bigbreak\bigbreak
\noindent\textsc{Istituto per le Applicazioni del Calcolo ''Mauro Picone'' }
\par\nopagebreak
E-mail addresses: shan.wang@u-pec.fr, \quad s.wang@iac.cnr.it

\end{document}